\documentclass{amsart}
\usepackage{xcolor}
\usepackage{cleveref,amssymb}

\title[Graded polynomial identities of $\mathrm{UT}$]{Graded polynomial identities of the infinite-dimensional upper triangular matrices over an arbitrary field}
\author{Micael Said Garcia}
\address{Department of Mathematics, Instituto de Matem\'atica e Estat\'istica, Universidade de S\~ao Paulo, SP, Brazil}
\email{micael.said@usp.br}
\author{Felipe Yukihide Yasumura}
\address{Department of Mathematics, Instituto de Matem\'atica e Estat\'istica, Universidade de S\~ao Paulo, SP, Brazil}
\email{fyyasumura@ime.usp.br}
\thanks{The first author is supported by CNPq - Brazil. The second named author is supported by Fapesp, grant 2023/03922-8 and 2018/23690-6.}

\newcommand{\F}{\mathbb{F}}

\newtheorem{theorem}{Theorem}[section]
\newtheorem{lemma}[theorem]{Lemma}
\newtheorem{corollary}[theorem]{Corollary}
\newtheorem{Prop}[theorem]{Proposition}
\theoremstyle{definition}
\newtheorem{definition}[theorem]{Definition}
\theoremstyle{remark}
\newtheorem{remark}[theorem]{Remark}

\begin{document}
\begin{abstract}
We compute the graded polynomial identities of the infinite dimensional upper triangular matrix algebra over an arbitrary field. If the grading group is finite, we prove that the set of graded polynomial identities admits a finite basis. We find conditions under which a grading on such an algebra satisfies a nontrivial graded polynomial identity. Finally, we provide examples showing that two nonisomorphic gradings can have the same set of graded polynomial identities.
\end{abstract}
    \maketitle
 
\section{Introduction}
This paper deals with the classification of graded polynomial identities of a non-PI-algebra. Kemer's celebrated result (see \cite{Kemerbook} or \cite{AljKBK2016}) provides a positive answer for the Specht problem in the context of associative algebras over a field of characteristic zero. In the case of graded algebras, the same problem has a positive solution for associative PI-algebras graded by a finite group over a field of characteristic zero (see \cite{AljKB2010}).

We compute the graded polynomial identities of the infinite-dimensional upper triangular matrices. Such algebra is constructed from the direct limit $\mathrm{UT}=\bigcup_{n\in\mathbb{N}}\mathrm{UT}_n$. From construction, we obtain that
$$
\mathrm{Id}(\mathrm{UT})=\bigcap_{n\in\mathbb{N}}\mathrm{Id}(\mathrm{UT}_n)=0.
$$
Hence, $\mathrm{UT}$ is not a PI-algebra. We hope to shed light on the theory of graded polynomial identities of non-PI-algebras, where we cannot derive its Specht property from the results of the paper \cite{AljKB2010}.

The graded polynomial identities of the finite-dimensional upper triangular matrices are described in the papers \cite{VinKoVa2004} (for infinite fields) and \cite{GR2020} (for finite fields). It is worth mentioning that the classification of the group gradings on the same algebra is done in the paper \cite{VaZa2007}. Regarding the nonassociative graded polynomial identities of the finite dimensional upper triangular matrices, including the case where the base field is finite, and related invariants, they were studied in the papers \cite{CMa, CMaS, CorK, DimasSa, GSK, KMa, PdrManu, Y23}.

The classification of graded polynomial identities of an infinite-dimensional algebras (associative or non-associative) is done in several works, for instance, \cite{FDK,FK1,FK2}. However, for the best of our knowledge, few of the works focus on a non-PI algebra.

The paper is structured as follows. In \Cref{sec:preliminaries}, we recall the basic definitions concerning the main objects of study. Then, in \Cref{sec:grpolid}, we classify the graded polynomial identities and find a basis of the relatively free algebra of the graded variety generated by a group grading on $\mathrm{UT}$ (\Cref{mainTheorem}). Moving on to \Cref{sec:conditionsID}, we find conditions for a grading on $\mathrm{UT}$ to satisfy a graded polynomial identity (\Cref{notEventuallyIncomplete} and \Cref{conditionID}). In the next section, we prove the finite basis property if the grading group if finite (\Cref{thm:finitebasis}). Finally, in the last section, we provide some examples to illustrate that non-isomorphic gradings on $\mathrm{UT}$ may satisfy the same set of graded polynomial identities, which is in contrast with the finite-dimensional case.

\section{Preliminaries}\label{sec:preliminaries}
\subsection{Group gradings} Let $G$ be a group and $\mathcal{A}$ an $\mathbb{F}$-algebra. We shall use a multiplicative notation for the group $G$, and denote its neutral element by $1$. A $G$-grading on $\mathcal{A}$ is a vector space decomposition
$$
\mathcal{A}=\bigoplus_{g\in G}\mathcal{A}_g,
$$
such that $\mathcal{A}_g\mathcal{A}_h\subseteq\mathcal{A}_{gh}$, for all $g$, $h\in G$. If a $G$-grading on $\mathcal{A}$ is fixed, then we say that $\mathcal{A}$ is $G$-graded. The component $\mathcal{A}_g$ is called the homogeneous component of degree $g$, and its nonzero elements are said to be homogeneous of degree $g$ as well. Given $x\in\mathcal{A}_g$, $x\ne0$, we denote $\deg_G x=g$. A subspace $\mathcal{S}\subseteq\mathcal{A}$ is called \emph{graded} if $\mathcal{S}=\bigoplus_{g\in G}\mathcal{S}\cap\mathcal{A}_g$. A graded subalgebra (ideal) is a subalgebra (an ideal) that is a graded subspace. The support of the grading is $\mathrm{Supp}\,\mathcal{A}=\{g\in G\mid\mathcal{A}_g\ne0\}$.

If $\mathcal{B}$ is another $G$-graded algebra, then a homomorphism of $G$-graded algebras is a homomorphism of algebras $\varphi:\mathcal{A}\to\mathcal{B}$ such that $\varphi(\mathcal{A}_g)\subseteq\mathcal{B}_g$, for all $g\in G$. A complete reference on the subject of graded algebras is the monograph \cite{EK2013}.

\subsection{Graded polynomial identities} Let $X^G=\bigcup_{g\in G}X^g$, where, for each $g\in G$, we have a set of variables $\{x_1^{(g)},x_2^{(g)},\ldots\}$. The free associative algebra $\mathbb{F}\langle X^G\rangle$ becomes a $G$-graded algebra naturally, imposing $\deg_G x_i^{(g)}=g$. It satisfies the following universal property. For any $G$-graded algebra $\mathcal{A}$ and map $\varphi:X^G\to\mathcal{A}$ respecting degrees (that is, $\varphi(x_i^{(g)})$ is either $0$ or homogeneous of degree $g$), there exists a unique $G$-graded algebra homomorphism $\bar{\varphi}:\mathbb{F}\langle X^G\rangle\to\mathcal{A}$ extending $\varphi$. An element $f=f(x_1^{(g_1)},\ldots,x_m^{(g_m)})\in\mathbb{F}\langle X^G\rangle$ is said to be a graded polynomial identity of $\mathcal{A}$ if $\bar{\varphi}(f)=0$, for all graded algebra homomorphism $\bar{\varphi}:\mathbb{F}\langle X^G\rangle\to\mathcal{A}$. In other words, $f(a_1,\ldots,a_m)=0$, for all $a_1\in\mathcal{A}_{g_1}$, \dots, $a_m\in\mathcal{A}_{g_m}$. The set of all graded polynomial identities of $\mathcal{A}$ is denoted by $\mathrm{Id}_G(\mathcal{A})$. We shall denote the variables of trivial homogeneous degree by $y_1=x_1^{(1)}$, $y_2=x_2^{(1)}$, \dots, and call them even variables. We use the letter $z$ to denote the variables of non-trivial homogeneous degree, and call them odd variables.

The notion of ordinary polynomial identities is recovered if we put $G=1$, the trivial group. We shall denote the set of ordinary polynomial identities of a given algebra $\mathcal{A}$ by $\mathrm{Id}(\mathcal{A})$. It should be noted that, if $G$ is finite, then $\mathrm{Id}(\mathcal{A})\subseteq\mathrm{Id}_G(\mathcal{A})$.

\subsection{Upper triangular matrices}
The algebra of $n\times n$ upper triangular matrices with entries in the field $\mathbb{F}$ is denoted by $\mathrm{UT}_n$. We shall denote by $e_{ij}$ the matrix units, that is, the matrix having an $1$ in the entry $(i,j)$ and $0$ elsewhere. A $G$-grading on $\mathrm{UT}_n$, where $G$ is a group, is said to be \emph{elementary} if there exists a map $\bar{g}:\{1,\ldots,n\}\to G$ such that every $e_{ij}$ is homogeneous and $\deg_G e_{ij}=\bar{g}(i)\bar{g}(j)^{-1}$. In 2007, Valenti and Zaicev (see \cite{VaZa2007}) proved that every group grading on $\mathrm{UT}_n$ is isomorphic to an elementary grading. Moreover, the isomorphism classes of elementary gradings on $\mathrm{UT}_n$ is classified in \cite{VinKoVa2004}. In the same paper, the authors provide a description of the graded polynomial identities satisfied by it, if the base field is infinite. If $\mathbb{F}$ is finite, then the paper \cite{GR2020} gives the computation of its graded polynomial identities.

To state their result, we need a definition. A sequence $\eta=(\eta_1,\ldots,\eta_m)\in G^m$ is said to be $\varepsilon$-good if there exist matrix units $r_1$, \ldots, $r_m\in J(\mathrm{UT}_n)$ such that $r_1\cdots r_m\ne0$ and $\deg_G r_i=\eta_i$. Otherwise, $\eta$ is said to be $\varepsilon$-bad. Now, consider the base field $\mathbb{F}$ endowed with the trivial $G$-grading, and let $\mathcal{S}$ be a set of generators of $\mathrm{Id}_G(\mathbb{F})$. It means that either:
\begin{enumerate}
\item $\mathcal{S}=\{[x_{2i-1}^{(1)},x_{2i}^{(1)}],x_i^{(g)}\mid g\ne1,i\in\mathbb{N}\}$ if $\mathbb{F}$ is infinite, or
\item $\mathcal{S}=\{[x_{2i-1}^{(1)},x_{2i}^{(1)}],(x_i^{(1)})^q-x_i^{(1)},x_i^{(g)}\mid g\ne1,i\in\mathbb{N}\}$ if $\mathbb{F}$ is finite containing $q$ elements.
\end{enumerate}
We can state the description of the graded polynomial identities of $\mathrm{UT}_n$ globally in the following way.

    \begin{theorem}[\cite{VinKoVa2004,GR2020}]
        Let $\varepsilon$ be a $G$-grading on $\mathrm{UT}_n$, over an arbitrary field $\mathbb{F}$. Then $\mathrm{Id}_G(\mathrm{UT}_n,\varepsilon)$ follows from all $f_1\cdots f_r$, where each $f_i\in\mathcal{S}$ (defined above), $(\deg_G f_1,\ldots,\deg_G f_r)$ is $\varepsilon$-bad, and $r\le n$.
    \end{theorem}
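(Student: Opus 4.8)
Write $\mathrm{UT}_n=D\oplus J$, where $D=\bigoplus_{i=1}^n\mathbb{F}e_{ii}$ is the diagonal subalgebra and $J=J(\mathrm{UT}_n)=\bigoplus_{i<j}\mathbb{F}e_{ij}$ is the radical, which is nilpotent with $J^n=0$. For the elementary grading $\varepsilon$ attached to $\bar g\colon\{1,\dots,n\}\to G$ one has $(\mathrm{UT}_n)_1=D\oplus J_1$ and $(\mathrm{UT}_n)_g=J_g$ for $g\ne1$, where $J_g=\mathrm{span}\{e_{ij}\mid i<j,\ \bar g(i)\bar g(j)^{-1}=g\}$. Let $I$ denote the $T_G$-ideal generated by the products $f_1\cdots f_r$ appearing in the statement. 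The inclusion $I\subseteq\mathrm{Id}_G(\mathrm{UT}_n,\varepsilon)$ is the routine half: every $f\in\mathcal{S}$ takes values, on homogeneous elements of $\mathrm{UT}_n$, inside $J_{\deg f}$, because an odd variable $x^{(g)}$ lies in $(\mathrm{UT}_n)_g=J_g$; the commutator $[x_a^{(1)},x_b^{(1)}]$ lies in $J$ (diagonal matrices commute and $J$ is an ideal) and is homogeneous of degree $1$; and $(x^{(1)})^q-x^{(1)}$ lies in $J$ (the diagonal entries $\lambda\in\mathbb{F}_q$ satisfy $\lambda^q=\lambda$) and is homogeneous of degree $1$. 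Hence $f_1\cdots f_r$ is valued in $J_{\eta_1}\cdots J_{\eta_r}$, where $\eta_i=\deg f_i$; a nonzero product of matrix units drawn from these subspaces must have the form $e_{i_0i_1}e_{i_1i_2}\cdots e_{i_{r-1}i_r}$ with $i_0<\dots<i_r$ and $\bar g(i_{k-1})\bar g(i_k)^{-1}=\eta_k$, that is, $(\eta_1,\dots,\eta_r)$ is $\varepsilon$-good. So if the degree sequence is $\varepsilon$-bad, $f_1\cdots f_r$ vanishes identically on $\mathrm{UT}_n$, and therefore $I\subseteq\mathrm{Id}_G(\mathrm{UT}_n,\varepsilon)$. (Keeping only generators with $r\le n$ costs nothing, since a sequence of length $\ge n$ is automatically $\varepsilon$-bad because $J^n=0$, so a longer product already follows from one of length $n$.)

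For the reverse inclusion $\mathrm{Id}_G(\mathrm{UT}_n,\varepsilon)\subseteq I$ I would run the classical two-step argument. \emph{Step one (reduction).} Show that $\mathbb{F}\langle X^G\rangle/I$ is spanned by ``normal-form'' monomials. Regarding a monomial as a string of even-variable blocks separated by odd variables, one uses the relations of $I$ to rewrite each even block as an ordered monomial in even variables (with exponents capped at $q-1$ when $\mathbb{F}=\mathbb{F}_q$) times an ordered product of left-normed commutators in even variables; one deletes every monomial whose ordered list of ``radical factors'' — the odd variables together with the commutators — forms an $\varepsilon$-bad sequence, because absorbing each interspersed block of diagonal even variables (all of trivial degree) into a neighbouring radical factor exhibits the monomial as a substitution instance of one of the generators $f_1\cdots f_r$; and one bounds the number of radical factors by $n-1$, for the same reason. \emph{Step two (independence).} Show that the surviving normal-form monomials are linearly independent modulo $\mathrm{Id}_G(\mathrm{UT}_n,\varepsilon)$: to each such monomial $m$ one attaches a substitution of its variables by matrix units, routing the radical factors of $m$ along the $\varepsilon$-good path $i_0<\dots<i_t$ whose edge-degree sequence equals that of $m$ and sending the even variables of each block to the diagonal idempotent $e_{ii}$ for the vertex $i$ reached there, chosen so that this substitution detects $m$ and annihilates every other normal-form monomial; then no nontrivial linear combination of normal-form monomials is a graded identity. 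The two steps combine in the usual way: given $f\in\mathrm{Id}_G(\mathrm{UT}_n,\varepsilon)$, write $f\equiv\bar f\pmod I$ with $\bar f$ a combination of normal-form monomials; since $I\subseteq\mathrm{Id}_G(\mathrm{UT}_n,\varepsilon)$ we get $\bar f\in\mathrm{Id}_G(\mathrm{UT}_n,\varepsilon)$, hence $\bar f=0$ by Step two and $f\in I$.

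I expect Step one to be the main obstacle. The awkward point is that even variables may be evaluated in $J_1$ and not only in $D$, so the commutator normal form inside the even blocks genuinely interacts with the $\varepsilon$-good/$\varepsilon$-bad dichotomy; one must track, factor by factor, not only the $G$-degree but whether a factor is being used ``diagonally'' or ``radically'', and verify that $\varepsilon$-bad induced path data really does place the monomial in $I$. A second difficulty is the finite-field case: multilinearization is unavailable, so the cap on the exponents of the even variables cannot simply be read off but must be forced using the identities $(x^{(1)})^q-x^{(1)}$ inside the $T_G$-ideal, which in turn interact with the possible presence of degree-$1$ radical components; this is exactly the refinement carried out in \cite{GR2020} beyond the infinite-field treatment of \cite{VinKoVa2004}. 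Over an infinite field, reducing to multihomogeneous and then multilinear polynomials removes the exponent issues and the argument collapses to the one in \cite{VinKoVa2004}.
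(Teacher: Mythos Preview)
The paper does not prove this theorem; it is quoted from \cite{VinKoVa2004,GR2020} as background and used as a black box (in particular, in the proof of \Cref{mainTheorem} the paper appeals to the basis of the relatively free algebra of $(\mathrm{UT}_r,\varepsilon_r)$ established in those references). So there is no in-paper proof to compare your attempt against.

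That said, your sketch is structurally the right one and matches the strategy of the cited papers (and of the infinite-dimensional analogue proved here): build a spanning set for $\mathbb{F}\langle X^G\rangle/I$ consisting of products $y_{1}^{a_1}\cdots y_s^{a_s}c_1\cdots c_m$ with the $c_i$ semistandard (resp.\ L-normal) commutators and $(\deg_G c_1,\ldots,\deg_G c_m)$ $\varepsilon$-good, then prove linear independence modulo $\mathrm{Id}_G(\mathrm{UT}_n,\varepsilon)$. One caution about your Step~two: the separation of normal-form monomials is not achieved by sending each even variable to a single diagonal idempotent $e_{ii}$ and each radical factor to a single matrix unit. Distinct normal-form monomials can share the same $\varepsilon$-good path and the same multiset of variables, so a single matrix-unit substitution will not annihilate all the others; over an infinite field the actual argument uses generic diagonal matrices (or a Vandermonde-type evaluation) to detect the exponents $a_i$ and the internal structure of the commutators, and over a finite field the L-normal refinement of \cite{GR2020} is designed precisely so that such evaluations still separate. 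Your diagnosis of where the work lies (the interaction of even variables with $J_1$, and the exponent capping in the finite-field case) is accurate.
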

        
\subsection{Infinite-dimensional upper triangular matrices} We shall define the main object of our study. For, we follow \cite{Mes}. Let $\mathcal{V}$ be a $\mathbb{F}$-vector space admitting as a basis the set $\{v_i\mid i\in\mathbb{N}\}$. For each $i\in\mathbb{N}$, let $\mathcal{V}_i=\mathrm{Span}\{v_j\mid j\le i\}$. Then, we define the triangularizable algebra
$$
\mathrm{UT}_\infty=\{T\in\mathrm{End}\,\mathcal{V}\mid T\mathcal{V}_i\subseteq\mathcal{V}_i,\,\forall i\in\mathbb{N}\}.
$$
We can define the matrix units using the following. For each $i\le j$, let $e_{ij}$ be the linear operator $\mathcal{V}\to\mathcal{V}$ such that, in the basis, it satisfies $e_{ij}(v_\ell)=\delta_{j\ell}v_i$. Then, $e_{ij}e_{k\ell}=\delta_{jk}e_{i\ell}$. We define $\mathrm{UT}=\mathrm{Span}\{e_{ij}\mid 1\le i\le j\}$. The space $\mathrm{UT}$ is an associative algebra, and it is isomorphic to the direct limit of the finite dimensional upper triangular matrix algebras. It is relevant to mention that, for each $n\in\mathbb{N}$, the subspace $\mathrm{Span}\{e_{ij}\mid 1\le i\le j\le n\}$ is a subalgebra isomorphic to $\mathrm{UT}_n$. Thus, we shall identify $\mathrm{UT}_n$ as a subalgebra of $\mathrm{UT}$, for each $n\in\mathbb{N}$.

We can define a grading on $\mathrm{UT}$ by the following way. Given a map $\bar{g}:\mathbb{N}\to G$, we impose that each $e_{ij}$ is homogeneous of degree $\bar{g}(i)\bar{g}(j)^{-1}$. It is worth mentioning that the map $\bar{g}_n$, obtained by the restriction of $\bar{g}$ to the set $\{1,2,\ldots,n\}$, gives a $G$-grading on $\mathrm{UT}_n$, which agrees with the grading as a subspace of $\mathrm{UT}$. The classification of the group gradings on $\mathrm{UT}$ (and on $\mathrm{UT}_\infty$) is obtained in \cite{SY}. The authors prove that every group grading on $\mathrm{UT}$ (in the paper, they denote by $\mathrm{UT}_{\to\beta}$) is isomorphic to an elementary grading. Every group grading on $\mathrm{UT}_\infty$ (denoted by $\mathrm{UT}_\beta$ in the paper) has a finite support, and it is obtained by the (topological) closure of a group grading on $\mathrm{UT}$. More precisely, if $\mathrm{UT}=\bigoplus_{g\in G}\mathcal{A}_g$ is the decomposition of a finite (that is, with a finite support) $G$-grading on $\mathrm{UT}$, then $\mathrm{UT}_\infty=\bigoplus_{g\in G}\overline{\mathcal{A}_g}$ is a $G$-grading on $\mathrm{UT}_\infty$. Every $G$-grading on $\mathrm{UT}_\infty$ comes from this construction.

It is worth to discuss an alternative way to describe the elementary gradings on $\mathrm{UT}$. Given an elementary grading on $\mathrm{UT}$, we set $\varepsilon:\mathbb{N}\to G$ via $\varepsilon(i)=\deg_{G} e_{i,i+1}$. Conversely, given any map $\varepsilon:\mathbb{N}\to G$, we obtain a well-defined elementary grading on $\mathrm{UT}$ by imposing that each matrix unit $e_{i,i+1}$ is homogeneous and of degree $\varepsilon(i)$. Once again, the restriction of $\varepsilon$ to the set $I_{n-1}=\{1,2,\ldots,n-1\}$, denoted by $\varepsilon_n=\varepsilon|_{I_{n-1}}$, defines an elementary grading on $\mathrm{UT}_n$. It agrees with the grading induced on $\mathrm{UT}_n$ as a graded subspace of $\mathrm{UT}$.

\section{Graded polynomial identities}
\label{sec:grpolid}
\subsection{General results} Let $G$ be a group (with multiplicative notation and neutral element $1$), and let $\varepsilon:\mathbb{N}\to G$ define a group grading on $\mathrm{UT}$ via $\deg_{G} e_{i,i+1}=\varepsilon(i)$. Since each $\mathrm{UT}_n$ is a graded subalgebra of $\mathrm{UT}$ and $\mathrm{UT}=\bigcup_{n\in\mathbb{N}}\mathrm{UT}_n$, it is worth studying some general results that holds in this setting. The grading on $\mathrm{UT}_n$ is defined by the restriction of the map $\varepsilon$ to the set $I_{n-1}$, and we denote $\varepsilon_n=\varepsilon|_{I_{n-1}}$. We have the following results.
    \begin{lemma}\label{lem1}
        Let I be a directed system, and $\{\mathcal{A}_i\}_{i\in I}$ be a family of $G$-graded algebras such that $\mathcal{A}_i\subseteq\mathcal{A}_j$ (embedding of graded algebras) if $i\le j$. Let $\mathcal{A} = \lim\limits_{\longrightarrow}\mathcal{A}_{i}$. Then, $\mathcal{A}$ is a $G$-graded algebra, and
        $$
        \mathrm{Id}_G(\mathcal{A})=\bigcap_{i\in I}\mathrm{Id}_G(\mathcal{A}_i).
        $$
    \end{lemma}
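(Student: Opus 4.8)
The plan is to verify the set equality by double inclusion, exploiting the universal property of the direct limit. First I would fix notation: write $\mathcal{A}=\varinjlim\mathcal{A}_i$ together with the canonical graded homomorphisms $\iota_i\colon\mathcal{A}_i\to\mathcal{A}$, which here are injective since all the structural maps are embeddings; thus $\mathcal{A}=\bigcup_{i\in I}\iota_i(\mathcal{A}_i)$, and this union is directed because $I$ is directed. The $G$-grading on $\mathcal{A}$ is the obvious one: $\mathcal{A}_g=\bigcup_{i\in I}\iota_i((\mathcal{A}_i)_g)$; one checks quickly that this gives a vector space decomposition (directedness of the union makes the sum direct) and that $\mathcal{A}_g\mathcal{A}_h\subseteq\mathcal{A}_{gh}$ follows from the same property in each $\mathcal{A}_i$ together with the fact that any two elements lie in a common $\iota_j(\mathcal{A}_j)$.

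For the inclusion $\mathrm{Id}_G(\mathcal{A})\subseteq\bigcap_i\mathrm{Id}_G(\mathcal{A}_i)$, observe that each $\iota_i$ is an embedding of $G$-graded algebras, and a graded identity of an algebra is inherited by every graded subalgebra: if $f$ vanishes under all graded substitutions in $\mathcal{A}$, then in particular it vanishes under substitutions from the graded subalgebra $\iota_i(\mathcal{A}_i)\cong\mathcal{A}_i$, so $f\in\mathrm{Id}_G(\mathcal{A}_i)$ for every $i$. For the reverse inclusion, take $f=f(x_1^{(g_1)},\ldots,x_m^{(g_m)})\in\bigcap_i\mathrm{Id}_G(\mathcal{A}_i)$ and any graded substitution $a_1\in\mathcal{A}_{g_1},\ldots,a_m\in\mathcal{A}_{g_m}$. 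Each $a_k$ lies in some $\iota_{i_k}((\mathcal{A}_{i_k})_{g_k})$; by directedness of $I$ there is a single index $j\ge i_1,\ldots,i_m$, and the structural embeddings being graded means all the $a_k$ actually lie in $\iota_j((\mathcal{A}_j)_{g_k})$. Since $\iota_j$ identifies $\mathcal{A}_j$ with a graded subalgebra of $\mathcal{A}$, evaluating $f$ on the $a_k$ is the same as evaluating the corresponding polynomial in $\mathcal{A}_j$, which is zero because $f\in\mathrm{Id}_G(\mathcal{A}_j)$. Hence $f\in\mathrm{Id}_G(\mathcal{A})$.

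I do not expect any genuine obstacle here; the statement is essentially a formal consequence of the definitions of direct limit and graded identity. The only point requiring a modicum of care is the finiteness argument in the second inclusion: a polynomial $f$ involves only finitely many variables, so only finitely many substituted elements, and therefore the directedness of $I$ suffices to pull the whole evaluation into a single $\mathcal{A}_j$ — this is exactly why the intersection, rather than some limit, appears on the right-hand side. In our application $I=\mathbb{N}$ with the usual order and $\mathcal{A}_i=\mathrm{UT}_i$ with the grading induced by $\varepsilon_i$, so the embeddings $\mathrm{UT}_i\hookrightarrow\mathrm{UT}_j$ are visibly graded and the lemma applies to give $\mathrm{Id}_G(\mathrm{UT},\varepsilon)=\bigcap_{n\in\mathbb{N}}\mathrm{Id}_G(\mathrm{UT}_n,\varepsilon_n)$.
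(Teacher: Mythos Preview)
Your proof is correct and follows essentially the same approach as the paper's: both establish the grading on the limit via the directed union of homogeneous components, use the subalgebra inclusions for one containment, and use directedness to place any finite tuple of homogeneous substitutions into a single $\mathcal{A}_j$ for the other. The only cosmetic difference is that the paper phrases the reverse inclusion by contradiction whereas you argue it directly.
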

    \begin{proof}
     Let $\mathcal{A}$ be the direct limit of $\{\mathcal{A}_i\}_{i\in I}$. Note that, for each $g\in G$, $\{(\mathcal{A}_i)_g\}$ is a direct system. Thus,
     $$ \mathcal{A}=\lim\limits_{\longrightarrow}\bigoplus_{g\in G}(\mathcal{A}_i)_g=\bigoplus_{g\in G}\lim\limits_{\longrightarrow}(\mathcal{A}_i)_g.
     $$
     So $\mathcal{A}$ has a $G$-grading as a vector space, where $\mathcal{A}_g=\lim\limits_{\longrightarrow}(\mathcal{A}_i)_g$. Now, let $a$, $b\in\mathcal{A}$ be homogeneous. Then, we can find $\mathcal{A}_i$ such that $a$, $b\in\mathcal{A}_i$. It implies that $ab$ is homogeneous of the correct degree, so that $\mathcal{A}$ is a $G$-graded algebra.
     
        Since $\mathcal{A}_i\subseteq\mathcal{A}$, for all $i\in I$, one has $\mathrm{Id}_G(\mathcal{A})\subseteq\bigcap_{i\in I}\mathrm{Id}_G(\mathcal{A}_i)$. Conversely, let $f(x_{1}^{(g_1)},\ldots,x_{m}^{(g_m)}) \in \bigcap_{i \in I}\mathrm{Id}_{G}(\mathcal{A}_{i}, \Gamma_{i})$. Suppose that $f \notin A$. Then, there exist $a_{1}$, \dots, $a_{m}\in\mathcal{A}$, homogeneous and $\deg_G a_i=g_i$, such that $f(a_{1}, \dots, a_{m}) \neq 0$. Moreover, we can find $i\in I$ such that $a_{1}$, \dots, $a_{m} \in\mathcal{A}_{i}$. Therefore, $f\notin\mathrm{Id}_G(\mathcal{A}_i)$, a contradiction. Hence, the equality holds valid.
    \end{proof}

Consider the same situation as in \Cref{lem1}, and let $\mathbb{F}_\mathcal{A}(X)$ and $\mathbb{F}_i(X)$ be the relatively free $G$-graded algebra in the variety $\mathrm{Var}(\mathcal{A})$ and $\mathrm{Var}(\mathcal{A}_i)$, respectively, for each $i\in I$. For each $i\le j$, since $\mathrm{Id}_G(\mathcal{A}_j)\subseteq\mathrm{Id}_G(\mathcal{A}_i)$, we have a surjective algebra homomorphism $\varphi_{ji}:\mathbb{F}_j(X)\to\mathbb{F}_i(X)$, extending the identity map $X\to X$.
\begin{Prop}\label{lem2}
Using the notation above, we have
$$
\mathbb{F}_\mathcal{A}(X)\cong\lim\limits_{\longleftarrow}\mathbb{F}_i(X).
$$
\end{Prop}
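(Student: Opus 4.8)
The plan is to exhibit the canonical comparison morphism between the two algebras and prove it is bijective, treating injectivity and surjectivity separately. Write $\mathbb{F}_i(X)=\mathbb{F}\langle X^G\rangle/\mathrm{Id}_G(\mathcal{A}_i)$ and $\mathbb{F}_\mathcal{A}(X)=\mathbb{F}\langle X^G\rangle/\mathrm{Id}_G(\mathcal{A})$, and let $\pi_i\colon\mathbb{F}\langle X^G\rangle\to\mathbb{F}_i(X)$ be the quotient maps. Since each $\varphi_{ji}$ restricts to the identity on $X^G$, one has $\varphi_{ji}\circ\pi_j=\pi_i$ whenever $i\le j$, so $(\pi_i)_{i\in I}$ is compatible with the inverse system and induces a graded homomorphism $\pi\colon\mathbb{F}\langle X^G\rangle\to\lim\limits_{\longleftarrow}\mathbb{F}_i(X)$. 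First I would note that $\ker\pi=\bigcap_{i\in I}\ker\pi_i=\bigcap_{i\in I}\mathrm{Id}_G(\mathcal{A}_i)$, which by \Cref{lem1} equals $\mathrm{Id}_G(\mathcal{A})$; hence $\pi$ factors through an injective graded homomorphism $\bar\pi\colon\mathbb{F}_\mathcal{A}(X)\hookrightarrow\lim\limits_{\longleftarrow}\mathbb{F}_i(X)$.

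For the reverse direction I would invoke the universal property of the relatively free algebra. The inverse limit embeds into $\prod_{i\in I}\mathbb{F}_i(X)$, and a graded polynomial identity holds in a direct product exactly when it holds in every factor; hence $\prod_i\mathbb{F}_i(X)$, and with it its graded subalgebra $\lim\limits_{\longleftarrow}\mathbb{F}_i(X)$, satisfies every identity in $\bigcap_i\mathrm{Id}_G(\mathcal{A}_i)=\mathrm{Id}_G(\mathcal{A})$, i.e. it lies in $\mathrm{Var}(\mathcal{A})$. The canonical images $\hat{x}_k^{(g)}=(\overline{x_k^{(g)}})_{i\in I}$ of the free generators are homogeneous of the expected degrees, so the universal property yields a unique graded homomorphism $\mathbb{F}_\mathcal{A}(X)\to\lim\limits_{\longleftarrow}\mathbb{F}_i(X)$ sending $\overline{x_k^{(g)}}$ to $\hat{x}_k^{(g)}$, and unwinding the definitions this map is $\bar\pi$. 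Thus $\mathrm{im}\,\bar\pi$ is the graded subalgebra generated by the $\hat{x}_k^{(g)}$, and everything reduces to showing this subalgebra is all of $\lim\limits_{\longleftarrow}\mathbb{F}_i(X)$ — equivalently, that every compatible family $(\bar f_i)_{i\in I}$ is represented by a single polynomial $f\in\mathbb{F}\langle X^G\rangle$, meaning $f\equiv f_i\pmod{\mathrm{Id}_G(\mathcal{A}_i)}$ for all $i$.

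This last reduction is where I expect the real difficulty, and it calls for care, because for a general tower of quotients with surjective transition maps the inverse limit can be strictly larger than the quotient by the intersection of the kernels — the phenomenon behind $\lim\limits_{\longleftarrow}\mathbb{Z}/p^n\mathbb{Z}\not\cong\mathbb{Z}$. Such spurious elements occur even here: for the trivial grading on $\mathrm{UT}=\bigcup_n\mathrm{UT}_n$ over an infinite field, take even variables $y_1,y_2$ and $c_k:=[y_1,y_2]^k$; a commutator takes values in the radical, and evaluating $y_1$ at a diagonal matrix with distinct entries and $y_2$ at $e_{12}+\cdots+e_{n-1,n}$ makes $[y_1,y_2]$ of nilpotency index exactly $n$ in $\mathrm{UT}_n$, so $c_k\in\mathrm{Id}(\mathrm{UT}_n)$ if and only if $k\ge n$. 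Then the family $(a_n)_n$ with $a_n:=\sum_{k=1}^{n-1}\overline{c_k}\in\mathbb{F}_n(X)$ is compatible — $\varphi_{n+1,n}(a_{n+1})=a_n$ because $\overline{c_n}=0$ in $\mathbb{F}_n(X)$ — hence defines an element of $\lim\limits_{\longleftarrow}\mathbb{F}_i(X)$, but it is not $\pi(f)$ for any $f$: if $\deg f=D$ and $k=\lfloor D/2\rfloor+1$, then $f-\sum_{j\le k}c_j\in\mathrm{Id}(\mathrm{UT}_{k+1})$, and since $\mathrm{Id}(\mathrm{UT}_{k+1})$ is multihomogeneous over an infinite field its degree-$2k$ component $-c_k$ would lie in $\mathrm{Id}(\mathrm{UT}_{k+1})$, contradicting $c_k\notin\mathrm{Id}(\mathrm{UT}_{k+1})$.

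I would therefore read $\lim\limits_{\longleftarrow}\mathbb{F}_i(X)$ as the inverse limit taken in the category of $G$-graded algebras of $\mathrm{Var}(\mathcal{A})$ equipped with a degree-preserving map from $X^G$ (morphisms fixing $X^G$); in that category an inverse limit of the $\mathbb{F}_i(X)$ is $\mathbb{F}\langle X^G\rangle/\bigcap_i\mathrm{Id}_G(\mathcal{A}_i)=\mathbb{F}_\mathcal{A}(X)$ by \Cref{lem1}, which is exactly the asserted isomorphism, and the proof is then the first two paragraphs: $\bar\pi$ is the structure map into the limit, it is monic by \Cref{lem1}, and any object of the category mapping compatibly to all $\mathbb{F}_i(X)$ is a quotient of $\mathbb{F}_\mathcal{A}(X)$ whose kernel lies in $\bigcap_i\mathrm{Id}_G(\mathcal{A}_i)=0$, so $\mathbb{F}_\mathcal{A}(X)$ is terminal. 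The one thing I would pin down at the outset is which of these conventions is intended; under the categorical one the argument is formal, with \Cref{lem1} as its real content.
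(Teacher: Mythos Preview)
Your setup and the injectivity argument coincide with the paper's: both build the comparison map $\mathbb{F}_\mathcal{A}(X)\to\lim\limits_{\longleftarrow}\mathbb{F}_i(X)$ from the compatible family $(\varphi_i)$ and deduce that its kernel is $\bigcap_i\mathrm{Id}_G(\mathcal{A}_i)=\mathrm{Id}_G(\mathcal{A})$ via \Cref{lem1}.

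The divergence is at surjectivity. The paper does not argue it at all: after checking $\varphi_i=\varphi_{ji}\circ\varphi_j$ it writes ``Hence, the map $\varphi\colon f\mapsto(\varphi_i(f))_{i\in I}$ is well-defined and surjective'', where the ``hence'' only justifies well-definedness (the image is a compatible family). Your counterexample --- the compatible family $\bigl(\sum_{k<n}\overline{[y_1,y_2]^k}\bigr)_n$ for the trivial grading on $\mathrm{UT}=\bigcup_n\mathrm{UT}_n$ over an infinite field --- shows that in the ordinary inverse limit of algebras, taken exactly as the paper writes it down inside $\prod_i\mathbb{F}_i(X)$, the map $\varphi$ is \emph{not} onto. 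So the proposition, read with the standard inverse limit, is false, and the gap in the paper's proof sits precisely where you anticipated.

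Your proposed repair --- reading $\lim\limits_{\longleftarrow}$ in the category of $G$-graded algebras of $\mathrm{Var}(\mathcal{A})$ under $X^G$ --- is a legitimate way to make the statement true, and under that convention the content does collapse to \Cref{lem1}, as you say. An equivalent salvage, closer to what the paper's argument actually establishes, is to replace the inverse limit by the graded subalgebra of $\lim\limits_{\longleftarrow}\mathbb{F}_i(X)$ generated by the diagonal images of $X^G$; both your argument and the paper's show that $\mathbb{F}_\mathcal{A}(X)$ is isomorphic to \emph{that} subalgebra. Either way, your diagnosis is correct and goes beyond what the paper provides.
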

\begin{proof}
We shall use the explicit construction of the inverse limit algebra, and prove that it is the relatively free $G$-graded algebra in the variety $\mathrm{Var}(\mathcal{A})$, freely generated by $X$. For, let
$$
\mathcal{B}=\left\{(a_i)_{i\in I}\in\prod_{i\in I}\mathbb{F}_i(X)\mid \varphi_{ji}(a_j)=a_i,\,\forall i\le j\right\}.
$$
Then, we have a bijection
$$
x\in X\mapsto (x)_{i\in I}\in\mathcal{B}.
$$
Denote such image by $\bar{X}$. Since $\mathrm{Id}_G(\mathcal{A})\subseteq\mathrm{Id}_G(\mathcal{A}_i)$, for each $i\in I$, there exists a surjective algebra homomorphism $\varphi_i:\mathbb{F}_\mathcal{A}(X)\to\mathbb{F}_i(X)$, extending the identity map $X\to X$. Moreover, one has $\varphi_i=\varphi_{ji}\circ\varphi_j$, for all $i\le j$. Hence, the map
$$
\varphi:f\in\mathbb{F}_\mathcal{A}(X)\mapsto(\varphi_i(f))_{i\in I}\in\mathcal{B}
$$
is well-defined and surjective. Let $f\in\mathrm{Ker}\,\varphi$. It means that $\varphi_i(f)=0$, for all $i\in I$. That is, $f\in\bigcap_{i\in I}\mathrm{Id}_G(\mathcal{A}_i)=\mathrm{Id}_G(\mathcal{A})$ (from \Cref{lem1}). It implies that $f=0$, so $\varphi$ is an algebra isomorphism.
\end{proof}

Now, we shall explore the connection between the graded polynomial identities of $\mathrm{UT}$ and of $\mathrm{UT}_\infty$. So, assume that $\Gamma$ is a $G$-grading on $\mathrm{UT}$ having a finite support. Then, $\Gamma$ extends to a $G$-grading $\bar{\Gamma}$ on $\mathrm{UT}_\infty$. In this case, we have:
\begin{Prop}
Let $\bar{\Gamma}$ be a $G$-grading on $\mathrm{UT}_\infty$ and $\Gamma$ its restriction to $\mathrm{UT}$. Then
$\mathrm{Id}_G(\mathrm{UT},\Gamma)=\mathrm{Id}_G(\mathrm{UT}_\infty,\bar{\Gamma})$.
\end{Prop}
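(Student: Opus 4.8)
The inclusion $\mathrm{Id}_G(\mathrm{UT}_\infty,\bar\Gamma)\subseteq\mathrm{Id}_G(\mathrm{UT},\Gamma)$ is immediate, since $\mathrm{UT}$ is a graded subalgebra of $\mathrm{UT}_\infty$. For the reverse inclusion the plan is to realize each $\mathrm{UT}_n$ as a graded quotient of $\mathrm{UT}_\infty$ whose kernels intersect to zero, and then to combine this with \Cref{lem1}. By \cite{SY} we may assume that $\bar\Gamma$ is the closure of an elementary grading on $\mathrm{UT}$ given by some $\varepsilon\colon\mathbb{N}\to G$; thus $\Gamma$ is this elementary grading, every matrix unit $e_{ij}$ is homogeneous, and $\deg_G e_{ii}=1$. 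I would then set $\pi_n=e_{11}+\cdots+e_{nn}\in\mathrm{UT}$, the projection of $\mathcal{V}$ onto $\mathcal{V}_n$, which is homogeneous of trivial degree.

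Next I would introduce the maps $\rho_n\colon\mathrm{UT}_\infty\to\mathrm{UT}_n$, $\rho_n(T)=\pi_n T\pi_n$, and check the following elementary facts: (i) $\pi_n T\pi_n=T\pi_n$ for every $T\in\mathrm{UT}_\infty$, because $T\pi_n$ already maps $\mathcal{V}$ into $\mathcal{V}_n$; from this, $\rho_n$ is an algebra homomorphism; (ii) $\rho_n(T)$ lies in $\mathrm{UT}_n=\mathrm{Span}\{e_{ij}\mid 1\le i\le j\le n\}$, and $\rho_n$ is surjective since $\pi_n e_{ij}\pi_n=e_{ij}$ for $i\le j\le n$, i.e. $\rho_n$ restricts to the identity on $\mathrm{UT}_n$; (iii) $\rho_n$ is a homomorphism of $G$-graded algebras: as $\deg_G\pi_n=1$, the element $\pi_n T\pi_n$ is homogeneous of the same degree as $T$ whenever $T$ is homogeneous, and the grading induced on $\mathrm{UT}_n$ is precisely $\varepsilon_n=\varepsilon|_{I_{n-1}}$; (iv) $\ker\rho_n=\{T\in\mathrm{UT}_\infty\mid T\mathcal{V}_n=0\}$, so that $\bigcap_{n\in\mathbb{N}}\ker\rho_n=0$, because $\mathcal{V}=\bigcup_n\mathcal{V}_n$. (These $\rho_n$ are exactly the structure maps realizing $\mathrm{UT}_\infty$ as the inverse limit $\lim\limits_{\longleftarrow}\mathrm{UT}_n$, in the spirit of \Cref{lem2}.)

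Granting this, the argument runs as follows. Since each $\rho_n$ is a surjective homomorphism of $G$-graded algebras, $\mathrm{Id}_G(\mathrm{UT}_\infty,\bar\Gamma)\subseteq\mathrm{Id}_G(\mathrm{UT}_n,\varepsilon_n)$ for every $n$. Conversely, let $f=f(x_1^{(g_1)},\ldots,x_m^{(g_m)})\in\bigcap_{n}\mathrm{Id}_G(\mathrm{UT}_n,\varepsilon_n)$ and pick homogeneous $a_1,\ldots,a_m\in\mathrm{UT}_\infty$ with $\deg_G a_i=g_i$. Then $\rho_n(a_1),\ldots,\rho_n(a_m)$ are homogeneous elements of $\mathrm{UT}_n$ of the same degrees, so $f(\rho_n(a_1),\ldots,\rho_n(a_m))=0$; since $\rho_n$ is a homomorphism, $\rho_n\!\left(f(a_1,\ldots,a_m)\right)=0$, i.e. $f(a_1,\ldots,a_m)\in\ker\rho_n$. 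Letting $n$ vary, $f(a_1,\ldots,a_m)\in\bigcap_n\ker\rho_n=0$, hence $f\in\mathrm{Id}_G(\mathrm{UT}_\infty,\bar\Gamma)$. This proves $\mathrm{Id}_G(\mathrm{UT}_\infty,\bar\Gamma)=\bigcap_{n\in\mathbb{N}}\mathrm{Id}_G(\mathrm{UT}_n,\varepsilon_n)$. On the other hand, $\mathrm{UT}=\bigcup_n\mathrm{UT}_n$ is the direct limit of the graded algebras $(\mathrm{UT}_n,\varepsilon_n)$, so \Cref{lem1} gives $\mathrm{Id}_G(\mathrm{UT},\Gamma)=\bigcap_{n\in\mathbb{N}}\mathrm{Id}_G(\mathrm{UT}_n,\varepsilon_n)$, and comparing the two equalities finishes the proof.

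\emph{Main obstacle.} No step is deep, and the computations with the $e_{ij}$ and the chain $\mathcal{V}_1\subseteq\mathcal{V}_2\subseteq\cdots$ are routine; the point that really needs care is item (iii): one must make sure that $\rho_n$ genuinely respects the grading — this is exactly where the reduction to an elementary grading via \cite{SY} is used, so that $\pi_n$ is homogeneous of trivial degree — and that the grading $\rho_n$ induces on $\mathrm{UT}_n$ is the grading $\varepsilon_n$ for which the graded identities are already known.
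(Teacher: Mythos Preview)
Your proof is correct and follows essentially the same route as the paper: both construct the graded projections $\mathrm{UT}_\infty\to\mathrm{UT}_n$ (your $\rho_n$ is the paper's $p_n$), observe they are graded algebra homomorphisms because $\pi_n$ has trivial degree in an elementary grading, and use $\bigcap_n\ker\rho_n=0$ to conclude. The only cosmetic difference is that you pass through the intermediate equality $\mathrm{Id}_G(\mathrm{UT}_\infty,\bar\Gamma)=\bigcap_n\mathrm{Id}_G(\mathrm{UT}_n,\varepsilon_n)$ and then invoke \Cref{lem1}, whereas the paper argues directly that $f\in\mathrm{Id}_G(\mathrm{UT},\Gamma)$ forces $f(p_n(a_1),\ldots,p_n(a_m))=0$ (since $p_n(a_i)\in\mathrm{UT}_n\subseteq\mathrm{UT}$) without naming \Cref{lem1}; the paper also records a second proof via continuity and nets.
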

\begin{proof}
Since $\mathrm{UT}\subseteq\mathrm{UT}_\infty$, then clearly $\mathrm{Id}_G(\mathrm{UT}_\infty,\bar{\Gamma})\subseteq\mathrm{Id}_G(\mathrm{UT},\Gamma)$. Conversely, let $f=f(x_1,\ldots,x_m)\in\mathrm{Id}_G(\mathrm{UT},\Gamma)$ and $a_1$, \dots, $a_m\in\mathrm{UT}_\infty$. We may assume that each matrix unit is homogeneous with respect to $\Gamma$. Let $v_n\in\mathcal{V}$ be a basis element and consider the projection $p_n:\mathrm{UT}_\infty\to\mathrm{UT}_n$. Since $p_n$ is a graded algebra homomorphism and $\mathrm{Ker}\,p_n\subseteq\mathrm{Ann}_{\mathrm{UT}_\infty}(v_n)$, we have
$$
f(a_1,\ldots,a_m)v_n=p_n(f(a_1,\ldots,a_m))v_n=f(p_n(a_1),\ldots,p_n(a_n))v_n=0.
$$
Thus, $f(a_1,\ldots,a_m)$ is the zero operator on $\mathcal{V}$. Hence, $f\in\mathrm{Id}_G(\mathrm{UT}_\infty,\bar{\Gamma})$.
\end{proof}
\begin{remark}
An alternative proof of the converse is the following. Consider $f=f(x_1,\ldots,x_m)\in\mathrm{Id}_G(\mathrm{UT},\Gamma)$ and $a_1$, \dots, $a_m\in\mathrm{UT}_\infty$. Then, we can find a net $(a_{1i},\ldots,a_{mi})$ in $\mathrm{UT}$ converging to $(a_1,\ldots,a_m)$. Since
$$
(x_1,\ldots,x_m)\in\mathrm{UT}_\infty\times\cdots\times\mathrm{UT}_\infty\mapsto f(x_1,\ldots,x_m)\in\mathrm{UT}_\infty
$$
is obtained from a linear combination of composition of the multiplication of the algebra, it is a continuous map. Thus,
$$
f(a_1,\ldots,a_m)=\lim \underbrace{f(a_{1i},\ldots,a_{mi})}_0=0.
$$
Hence, $f\in\mathrm{Id}_G(\mathrm{UT}_\infty,\bar{\Gamma})$, as required.
\end{remark}
The latter proposition tells us that it is enough to classify the graded polynomial identities of $\mathrm{UT}$, and from it, we automatically obtain the graded polynomial identities of $\mathrm{UT}_\infty$.
    
\subsection{Graded polynomial identities of $\mathrm{UT}$}
   Recall that $\varepsilon:\mathbb{N}\to G$ defines a $G$-grading on $\mathrm{UT}$ via $\deg_{G}(e_{i,i+1})=\varepsilon(i)$. We shall see $\varepsilon = (\varepsilon_{i})_{i\in\mathbb{N}}$ as a sequence of elements of $G$. First, we repeat the notion of good and bad sequence from \cite[Definition 2.1]{VinKoVa2004}:
    
    \begin{definition}
         We call a finite sequence $\eta = (\eta_{1},\eta_{2},\ldots,\eta_{m})$, where $m\ge0$, of elements from $G$ an \emph{$\varepsilon$-good sequence} if there exists a sequence of $m$ matrix units $(r_{1},r_{2},\ldots,r_{m})$ in the Jacobson radical of $\mathrm{UT}$ such that $r_{1}r_{2}\cdots r_{m} \neq 0$ and $\deg_{G}(r_{i}) = \eta_{i}$, for all $i = 1,\dots,m$. Otherwise, we call $\eta$ an \emph{$\varepsilon$-bad sequence}. We denote by $\mathcal{B}(\varepsilon)$ the set of all $\varepsilon$-bad sequences, and $\mathcal{G}(\varepsilon)$ the set of all $\varepsilon$-good sequences. Similarly, we will denote by $\mathcal{B}(\varepsilon_{n})$ the set of all $\varepsilon_{n}$-bad sequences and $\mathcal{G}(\varepsilon_{n})$ the set of all $\varepsilon_{n}$-good sequences.
    \end{definition}

    Now, we give a description of the set $\mathcal{G}(\varepsilon)$.
    \begin{Prop}\label{goodsequencesbadsuite}
        The set $\mathcal{G}(\varepsilon)$ is inductively constructed as following:
        \begin{enumerate}
            \item All finite consecutive subsequences of $\varepsilon$ are in $\mathcal{G}(\varepsilon)$, and
            \item If $(\eta_{1},\eta_{2},\ldots,\eta_{m}) \in \mathcal{G}(\varepsilon)$, then $(\eta_{1},\eta_{2},\ldots,\eta_{p-1},\eta_{p}\eta_{p+1},\eta_{p+2},\ldots,\eta_{m}) \in \mathcal{G}(\varepsilon)$ for all $1 \leq p < m$
        \end{enumerate}
    \end{Prop}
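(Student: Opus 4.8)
The plan is to reduce the combinatorial condition defining $\varepsilon$-good sequences to a statement about strictly increasing chains of indices, after which the claimed inductive description becomes essentially a reformulation. The basic observations are: every matrix unit lying in $J(\mathrm{UT})$ has the form $e_{ab}$ with $a<b$; one has $e_{ab}=e_{a,a+1}e_{a+1,a+2}\cdots e_{b-1,b}$, so $\deg_G e_{ab}=\varepsilon_a\varepsilon_{a+1}\cdots\varepsilon_{b-1}$; and a product $r_1r_2\cdots r_m$ of matrix units from $J(\mathrm{UT})$ is nonzero precisely when there is a strictly increasing sequence $i_1<i_2<\cdots<i_{m+1}$ of natural numbers with $r_k=e_{i_k,i_{k+1}}$ for every $k$, in which case $r_1\cdots r_m=e_{i_1,i_{m+1}}$. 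Consequently, $\eta=(\eta_1,\ldots,\eta_m)$ is $\varepsilon$-good if and only if there exist $i_1<\cdots<i_{m+1}$ with $\eta_k=\varepsilon_{i_k}\varepsilon_{i_k+1}\cdots\varepsilon_{i_{k+1}-1}$ for all $k$ (for $m=0$ this reads as the empty product being nonzero, so the empty sequence is good).

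Let $\mathcal{C}$ denote the smallest set of finite sequences over $G$ that contains every finite consecutive subsequence of $\varepsilon$ and is closed under the operation in item (2); I would prove $\mathcal{C}=\mathcal{G}(\varepsilon)$ by establishing the two inclusions separately. For $\mathcal{C}\subseteq\mathcal{G}(\varepsilon)$: a consecutive subsequence $(\varepsilon_a,\ldots,\varepsilon_b)$ is good, witnessed by $r_k=e_{a+k-1,a+k}$, whose product is $e_{a,b+1}\ne0$; and if $\eta\in\mathcal{G}(\varepsilon)$ is witnessed by $r_1,\ldots,r_m$ with $r_k=e_{i_k,i_{k+1}}$, then for $1\le p<m$ we may replace the pair $r_p,r_{p+1}$ by the single matrix unit $r_pr_{p+1}=e_{i_p,i_{p+2}}$, which lies in $J(\mathrm{UT})$ since $i_p<i_{p+2}$ and is nonzero since $r_1\cdots r_m\ne0$; this produces a witness for $(\eta_1,\ldots,\eta_{p-1},\eta_p\eta_{p+1},\eta_{p+2},\ldots,\eta_m)$ with the same total product, so operation (2) preserves goodness.

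For $\mathcal{G}(\varepsilon)\subseteq\mathcal{C}$: given $\eta$ good, fix a chain $i_1<\cdots<i_{m+1}$ as above and start from the consecutive subsequence $\xi=(\varepsilon_{i_1},\varepsilon_{i_1+1},\ldots,\varepsilon_{i_{m+1}-1})\in\mathcal{C}$. Finitely many applications of operation (2) allow one to merge any block of consecutive entries of a sequence into their (ordered) product — this is a trivial induction on the block length — and applying this to the blocks $(\varepsilon_{i_k},\ldots,\varepsilon_{i_{k+1}-1})$, $k=1,\ldots,m$, turns $\xi$ into $\eta$. Hence $\eta\in\mathcal{C}$, completing the argument.

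No serious obstacle is expected here: the whole content is the passage from ``a product of matrix units is nonzero'' to ``the indices form an increasing chain'', together with the bookkeeping of the degenerate cases — when $i_{k+1}=i_k+1$ the $k$-th block is the single entry $\varepsilon_{i_k}$ and needs no merging, and when $m\le 1$ the claim reduces to the fact that the degrees of matrix units in $J(\mathrm{UT})$ are exactly the products $\varepsilon_a\varepsilon_{a+1}\cdots\varepsilon_b$ with $a\le b$. The one point that merits an explicit sentence is the verification that operation (2) realizes an arbitrary block-merge, which the induction above handles.
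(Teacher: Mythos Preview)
Your proposal is correct and follows essentially the same approach as the paper: both verify that consecutive subsequences are good and that operation~(2) preserves goodness, then obtain the reverse inclusion by factoring each witnessing matrix unit $e_{i_k,i_{k+1}}$ as a product of the elementary units $e_{j,j+1}$, thereby exhibiting the original good sequence as arising from a consecutive subsequence via repeated merges. Your presentation is slightly more formal (explicitly naming $\mathcal{C}$ and the chain characterization), but the underlying argument is identical.
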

    \begin{proof}
        Let $(g_{j},g_{j+1},\ldots,g_{l})$ be a consecutive subsequence of $\varepsilon$. Since for all $i$ we have $\deg_{G}(e_{i,i+1}) = g_{i}$ and $e_{j,j+1}\cdots e_{l,l+1} \neq 0$, we get that $(g_{j},g_{j+1},\dots,g_{l})$ is $\varepsilon$-good.
        
        Now, let $(\eta_{1},\eta_{2},\dots,\eta_{m}) \in \mathcal{G}(\varepsilon)$. Then, there exists a sequence of $m$ matrix units $(r_{1},r_{2},\ldots,r_{m}) \in J(\mathrm{UT})$  such that $r_{1}r_{2}\cdots r_{m} \neq 0$ and $\deg_{G}(r_{i}) = \eta_{i}$, for all $i$. Now, $\deg_{G}(r_{p}r_{p+1}) = \eta_{i}\eta_{i+1}$ for all $1 \leq p < m$. Therefore, by definition,
        $(\eta_{1},\eta_{2},\ldots,\eta_{p-1},\eta_{p}\eta_{p+1},\eta_{p+2},\ldots,\eta_{m}) \in \mathcal{G}(\varepsilon)$.
        
        Finally, let $\eta = (\eta_{1},\eta_{2},\ldots,\eta_{m}) \in \mathcal{G}(\varepsilon)$. Once again, there exist matrix units $e_{i_{1},i_{2}}$, \dots, $e_{i_{m},i_{m+1}}$ such that $\deg_{G}(e_{i_{p},i_{p+1}}) = \eta_{p}$ and $e_{i_{1},i_{2}}\cdots e_{i_{m},i_{m+1}} \neq 0$. However, $e_{i_{p},i_{p+1}} = e_{i_{p},i_{p}+1}\cdots e_{i_{p+1}-1,i_{p+1}}$, for each $p$. Therefore, by induction, there exists a finite consecutive subsequence of $\varepsilon$ that generates $\eta$ using the process (2) several times.
    \end{proof}

    As a consequence we have the following statement:
    \begin{Prop}
        \label{monoidGenerator}
        Let $G$ be a group and let $\varepsilon:\mathbb{N}\to G$ define a $G$-grading on $\mathrm{UT}$. If $A$ is a set of generators of G as a monoid then every finite sequence of elements of $A$ is $\varepsilon$-good if and only if every finite sequence of elements of $G$ is $\varepsilon$-good.\qed
    \end{Prop}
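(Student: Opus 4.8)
The plan is to prove the two implications separately. The implication ``every finite sequence of elements of $G$ is $\varepsilon$-good $\Rightarrow$ every finite sequence of elements of $A$ is $\varepsilon$-good'' is trivial, since every finite sequence over $A$ is in particular a finite sequence over $G$. So all the content lies in the converse, and the idea is to reduce an arbitrary sequence over $G$ to a sequence over $A$ by factoring each entry into generators, and then to recover it via \Cref{goodsequencesbadsuite}(2).

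Concretely, assume every finite sequence of elements of $A$ is $\varepsilon$-good and fix $\eta=(\eta_1,\dots,\eta_m)\in G^m$. If $G=\{1\}$ there is nothing to do: the grading on $\mathrm{UT}$ is trivial, every matrix unit has degree $1$, and $(1,\dots,1)$ is $\varepsilon$-good via $r_i=e_{i,i+1}$ (as $e_{1,2}e_{2,3}\cdots e_{m,m+1}=e_{1,m+1}\ne0$). Assume now $G\ne\{1\}$. I first note that $1$ is a \emph{nonempty} product of elements of $A$: choose $a\in A$ with $a\ne1$ (possible, else $A\subseteq\{1\}$ would force $G=\{1\}$), write $a^{-1}=b_1\cdots b_\ell$ with $b_j\in A$ (using that $A$ generates $G$ as a monoid), observe $\ell\ge1$ since $a^{-1}\ne1$, and conclude $ab_1\cdots b_\ell=1$. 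Consequently each $\eta_i$ can be written as $\eta_i=a_{i,1}\cdots a_{i,k_i}$ with $a_{i,j}\in A$ and $k_i\ge1$.

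Now form the concatenation $\sigma=(a_{1,1},\dots,a_{1,k_1},\,a_{2,1},\dots,a_{2,k_2},\,\dots,\,a_{m,1},\dots,a_{m,k_m})$, a finite sequence of elements of $A$, hence $\varepsilon$-good by hypothesis. Applying \Cref{goodsequencesbadsuite}(2) repeatedly — each application replaces two consecutive entries by their product — I merge the $k_i-1$ internal adjacent pairs of the $i$-th block, collapsing $(a_{i,1},\dots,a_{i,k_i})$ to the single entry $\eta_i$ while never touching the boundaries between blocks; doing this for every block transforms $\sigma$ into $(\eta_1,\dots,\eta_m)=\eta$, which is therefore $\varepsilon$-good, as desired.

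The only subtlety — the step I would flag as the main obstacle — is insisting that each $\eta_i$ be written as a \emph{nonempty} product of generators: an entry $\eta_i=1$ represented by an empty block cannot be recovered, because merging an empty block merely deletes a nonexistent term rather than producing the entry $\eta_i=1$. The observation that $1$ itself is a nonempty product of elements of $A$ whenever $G$ is nontrivial resolves this, after which the argument is a routine iteration of \Cref{goodsequencesbadsuite}(2).
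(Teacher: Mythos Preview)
Your argument is correct and follows the same route the paper intends: the paper presents this proposition with a bare \qed, signalling that it is a direct consequence of \Cref{goodsequencesbadsuite}, and your proof is precisely the natural unpacking of that consequence---factor each $\eta_i$ into generators, concatenate, and merge using part (2). Your explicit treatment of the case $\eta_i=1$ (ensuring a \emph{nonempty} factorization via $1=a b_1\cdots b_\ell$ when $G\ne\{1\}$, and handling $G=\{1\}$ separately) is a genuine detail that the paper's \qed elides, and it is good that you isolated it.
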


    Now, we construct the following polynomials:
    \begin{definition}[{\cite{VinKoVa2004} and \cite[Definition 3.19]{GR2020}}]
        \label{badpolynomial}
         We denote by $I(\varepsilon)$ the ideal of graded polynomial identities generated by all polynomials $f_\eta=f_{1}\cdots f_{m}$ where $\eta=(\deg_{G}(f_{1}),\ldots,\deg_{G}(f_{m}))$ is an $\varepsilon$-bad sequence and each $f_{i}$ has one of the following forms:
        \begin{enumerate}
            \item $f_{i} = z_{i}$,
            \item $f_{i} = [y_{2i},y_{2i+1}]$,
            \item $f_{i} = y_{2i}^{q} - y_{2i}$, if $\mathbb{F}$ is a finite field with $q$ elements.
        \end{enumerate}
    \end{definition}
Following the finite-dimensional case (see \cite[Proposition 2.2]{VinKoVa2004} and \cite[Lemma 3.20]{GR2020}), it follows that a sequence is $\varepsilon$-bad if and only if the polynomials constructed above are $G$-graded polynomial identities of $(\mathrm{UT},\varepsilon)$. It means that $I(\varepsilon) \subseteq\mathrm{Id}_{G}(\mathrm{UT}, \varepsilon)$.

    First, we can describe the $\varepsilon$-bad and $\varepsilon$-good sequences in terms of the $\varepsilon_n$-bad and $\varepsilon_n$-good sequences.
    \begin{lemma}
        \label{propositionBadGood}
        The following relations are valid:
        \begin{enumerate}
        \item $\mathcal{B}(\varepsilon) = \bigcap\limits_{n \in \mathbb{N}}\mathcal{B}(\varepsilon_{n})$,
        \item $\mathcal{G}(\varepsilon)= \bigcup\limits_{n\in\mathbb{N}}\mathcal{G}(\varepsilon_n)$.
        \end{enumerate}
    \end{lemma}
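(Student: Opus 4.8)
The plan is to observe first that the two statements are formally equivalent, and then prove, say, the one about good sequences. Indeed, by the very definition, for the fixed universe $U$ of all finite sequences of elements of $G$ we have the partitions $U=\mathcal{G}(\varepsilon)\sqcup\mathcal{B}(\varepsilon)$ and $U=\mathcal{G}(\varepsilon_n)\sqcup\mathcal{B}(\varepsilon_n)$ for every $n$. Hence $\bigcap_{n}\mathcal{B}(\varepsilon_n)=\bigcap_n(U\setminus\mathcal{G}(\varepsilon_n))=U\setminus\bigcup_n\mathcal{G}(\varepsilon_n)$, so the identity in (1) holds if and only if the identity in (2) holds. I would record this reduction and then focus exclusively on (2).

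For the inclusion $\bigcup_{n}\mathcal{G}(\varepsilon_n)\subseteq\mathcal{G}(\varepsilon)$: fix $n$ and $\eta=(\eta_1,\ldots,\eta_m)\in\mathcal{G}(\varepsilon_n)$, witnessed by matrix units $r_1,\ldots,r_m\in J(\mathrm{UT}_n)$ with $r_1\cdots r_m\neq0$ and $\deg_G r_i=\eta_i$. Since $\mathrm{UT}_n$ is a graded subalgebra of $\mathrm{UT}$ and, as recalled in the Preliminaries, the grading $\varepsilon_n$ on $\mathrm{UT}_n$ agrees with the one induced on $\mathrm{UT}_n\subseteq\mathrm{UT}$, each $r_i$ is a matrix unit of $J(\mathrm{UT})$ with the same $G$-degree, and $r_1\cdots r_m$ is still nonzero in $\mathrm{UT}$. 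Thus the same sequence of matrix units witnesses $\eta\in\mathcal{G}(\varepsilon)$.

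For the reverse inclusion: let $\eta=(\eta_1,\ldots,\eta_m)\in\mathcal{G}(\varepsilon)$, witnessed by matrix units $r_i=e_{a_i,b_i}\in J(\mathrm{UT})$ (so $a_i<b_i$) with $r_1\cdots r_m\neq0$ and $\deg_G r_i=\eta_i$. From $e_{a_i,b_i}e_{a_{i+1},b_{i+1}}\neq0$ we get $b_i=a_{i+1}$ for every $i<m$, so all the indices $a_1,b_1,\ldots,a_m,b_m$ are bounded above by $N:=b_m$. Consequently each $r_i$ lies in $\mathrm{UT}_N=\mathrm{Span}\{e_{ij}\mid 1\le i\le j\le N\}$, hence $r_i\in J(\mathrm{UT}_N)$ with $\deg_G r_i=\eta_i$ and $r_1\cdots r_m\neq0$ in $\mathrm{UT}_N$. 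Therefore $\eta\in\mathcal{G}(\varepsilon_N)\subseteq\bigcup_n\mathcal{G}(\varepsilon_n)$, which proves (2); by the reduction above, (1) follows too.

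I do not expect a genuine obstacle here: the whole content is the elementary remark that a nonzero product of matrix units in $\mathrm{UT}$ involves only finitely many indices and is therefore already realized inside some $\mathrm{UT}_N$. The only points deserving a line of care are the bookkeeping that $G$-degrees are computed consistently in $\mathrm{UT}_n$ and in $\mathrm{UT}$ (which holds because the grading on $\mathrm{UT}_n$ is the restriction of $\varepsilon$), and the degenerate sequence of length $m=0$, which by convention lies in $\mathcal{G}(\varepsilon)$ and in every $\mathcal{G}(\varepsilon_n)$ alike and so introduces no discrepancy in either identity.
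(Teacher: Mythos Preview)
Your proof is correct and is essentially the same as the paper's: both rest on the observation that a nonzero product of matrix units in $\mathrm{UT}$ involves only finitely many indices and hence already lives in some $\mathrm{UT}_N$, together with the complementation linking (1) and (2). The only cosmetic difference is order: the paper proves (1) directly (via contrapositive, which amounts to your good-sequence argument) and then deduces (2) by complements, whereas you prove (2) first and deduce (1).
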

    \begin{proof}
        Let $(\eta_{1},\eta_{2},\ldots,\eta_{m}) \in \mathcal{B}(\varepsilon)$, and suppose that $(\eta_{1},\eta_{2},\ldots,\eta_{m}) \notin \mathcal{B}(\varepsilon_{n})$ for some $n \in \mathbb{N}$. Then, there exists $r_{1},r_{2},\ldots,r_{m} \in J(UT_{n})$ such that $r_{1}r_{2}\cdots r_{m} \neq 0$. Since $J(\mathrm{UT}_n)\subseteq J(\mathrm{UT})$, we get a contradiction. Therefore, $(\eta_{1},\eta_{2},\dots,\eta_{m}) \in \bigcap\limits_{n \in\mathbb{N}}\mathcal{B}(\varepsilon_{n})$.
        
        Conversely, let $\eta=(\eta_{1},\eta_{2},\ldots,\eta_{m}) \in \bigcap\limits_{n \in \mathbb{N}}\mathcal{B}(\varepsilon_{n})$. Assume that there exist $r_1$, \dots, $r_m\in J(\mathrm{UT})$ such that $r_1\cdots r_m\ne0$ and $\deg_{G}(r_i)=\eta_i$, for $i=1,2,\ldots,m$. Then, we can find $n\in\mathbb{N}$ such that $r_1$, \dots, $r_m\in J(\mathrm{UT_n})$, so $\eta$ is $\varepsilon_n$-good, a contradiction.

        Now, consider all the finite subsequences of elements of $G$. Since every sequence is either $\varepsilon$-good or $\varepsilon$-bad, then we can take complements to obtain
        $$    \mathcal{G}(\varepsilon)=\mathcal{B}(\varepsilon)^c=\left(\bigcap\limits_{n\in\mathbb{N}}\mathcal{B}(\varepsilon_n)\right)^c=\bigcup_{n\in\mathbb{N}}\mathcal{B}(\varepsilon_n)^c=\bigcup_{n\in\mathbb{N}}\mathcal{G}(\varepsilon_n).
        $$
        This proves the second assertion.
    \end{proof}

    Digressing for a moment, we will state some properties in the (non-graded) free associative algebra. First, recall the concept of standard commutator given by Hall in \cite{H1950}. For it, let $X = \{x_{1},x_{2},\ldots\}$ be a set of variables with an ordering where $x_{i_{1}} < x_{i_{j}}$, if $i_{1} < i_{j}$. We will extend this ordering to the set of all Lie words on $X$ deg-lexicographically. It means that, given $u=[u_{1},u_2]$ and $v=[v_{1},v_2]$, we have $u < v$ if $\deg(u) < \deg(v)$, or $\deg (u) = \deg(v)$ and either $u_1<v_1$, or $u_1=v_1$ and $u_2<v_2$. Then we define inductively:
    \begin{enumerate}
        \item all elements of $X$ are standard commutators of degree $1$.
        \item $[u, v]$ is a standard commutator of degree $n = \deg(u) + \deg(v)$ if:
        \begin{enumerate}
            \item $u$ and $v$ are standard commutators with $u>v$, and
            \item if $u = [w, z]$, then $z \leq v$.
        \end{enumerate}
    \end{enumerate}

    Hall showed that the set of all standard commutators is a basis of the free Lie algebra generated by $X$. Thus, combining with PBW we obtain a basis of the free associative algebra.

    The following will be useful for our purposes.
    
    \begin{definition}
        Let $c$ be a standard commutator. We define the \emph{weight of $c$}, denoted by $w(c)$, inductively by the following:
        \begin{enumerate}
            \item if $\deg(c) = 1$ (that is, $c$ is a variable), then $w(c) = 0$,
            \item if $c=[x,y]$, then $w(c) = 1$,
            \item if $c=[c_{1},c_{2}]$, where $\deg(c) > 2$, and $c_{1}$ and $c_{2}$ are standard commutators, then $w(c) = w(c_{1}) + w(c_{2})$.
        \end{enumerate}
    \end{definition}

    It is worth noting that the standard commutators of weight $1$ are of the kind $[x_{i_1},x_{i_2},\ldots,x_{i_m}]$, with $i_1>i_2\le i_3\cdots\le i_m$.
    \begin{lemma}
        \label{lem3.15}
        If $c = [c_{1},c_{2}]$ is a standard commutator with $w(c) > 1$, then we have $0 < w(c_{i}) < w(c)$, for $i=1,2$.
    \end{lemma}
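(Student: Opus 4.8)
The plan is to establish the bounds $w(c_i)>0$ and $w(c_i)<w(c)$ separately. We first observe that $\deg(c)=\deg(c_1)+\deg(c_2)\ge 2$, and that if $\deg(c)=2$ then $c=[x,y]$ for variables $x>y$, so $w(c)=1$, contradicting the hypothesis $w(c)>1$; hence $\deg(c)\ge 3$ and we are in case (3) of the definition of weight, i.e. $w(c)=w(c_1)+w(c_2)$. Since weights are nonnegative integers, as soon as we show $w(c_1)>0$ and $w(c_2)>0$ this identity gives $w(c_1)=w(c)-w(c_2)<w(c)$ and likewise $w(c_2)<w(c)$, completing the proof. We will also use the elementary fact, provable by a short induction on degree, that a standard commutator has weight $0$ if and only if it is a single variable.

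To see that $w(c_1)>0$, suppose $c_1$ is a variable. Then $\deg(c_1)=1$ while $\deg(c_2)=\deg(c)-1\ge 2$; but a standard commutator $[c_1,c_2]$ requires $c_1>c_2$ in the deg-lexicographic order, which is impossible when $\deg(c_1)<\deg(c_2)$. Hence $c_1$ is not a variable and so $w(c_1)>0$.

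The remaining inequality $w(c_2)>0$ is the step we expect to be the main obstacle. Assume for contradiction that $c_2$ is a variable. The key structural observation is that if $[a,b]$ is a standard commutator with $b$ a variable and $\deg(a)\ge 2$, then $a=[a',b']$ with $b'$ again a variable: indeed $\deg(a)\ge 2$ forces $a=[a',b']$ for standard commutators $a'>b'$, and the standardness condition for $[a,b]=[[a',b'],b]$ requires $b'\le b$, which, since $b$ has degree $1$, forces $b'$ to have degree $1$. Applying this repeatedly — to $c_1=[c_1^{(1)},b^{(1)}]$, then to $c_1^{(1)}=[c_1^{(2)},b^{(2)}]$, and so on, with the degrees strictly decreasing — we conclude that $c_1$ is left-normed, $c_1=[x_{i_1},x_{i_2},\ldots,x_{i_k}]$ with $k\ge 2$. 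By cases (2) and (3) of the weight definition such a commutator has weight $1$, so $w(c)=w(c_1)+w(c_2)=1+0=1$, contradicting $w(c)>1$. Hence $w(c_2)>0$, and combined with the first two paragraphs this yields $0<w(c_i)<w(c)$ for $i=1,2$.
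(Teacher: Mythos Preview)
Your proof is correct. Both you and the paper pivot on the same observation: in a standard commutator $[[d_1,d_2],c_2]$ the condition $d_2\le c_2$ in the deg-lex order forces $d_2$ to be a variable whenever $c_2$ is. The difference is organizational. The paper runs an induction on $\deg(c)$: assuming $w(c_2)=0$ gives $w(c_1)=w(c)>1$, so the induction hypothesis applied to $c_1=[d_1,d_2]$ yields $w(d_2)>0$, hence $\deg(d_2)\ge 2$, contradicting $d_2\le c_2$ with $\deg(c_2)=1$. You instead iterate the observation all the way down, showing directly that if $c_2$ is a variable then $c_1$ must be left-normed and therefore has weight exactly $1$, contradicting $w(c)>1$. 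Your route avoids inducting on the lemma's own statement and yields the extra structural fact that a standard commutator of weight $1$ is precisely a left-normed commutator; the paper's route is a shade shorter once the induction is set up.
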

    \begin{proof}
        Since $c = [c_{1},c_{2}]$, we have $w(c_{1}) + w(c_{2}) = w(c)$. Then, it is enough to prove that $w(c_{1}), w(c_{2}) > 0$. We shall do this by induction on $\deg(c)$. For $\deg(c) \leq 3$ the statement is vacuous. Suppose that $\deg(c)>3$ and the lemma holds valid for all standard commutators $c'$ with $\deg(c')<\deg(c)$. Then, if $c$ is a standard commutator with $w(c) > 1$, we can write $c = [c_{1},c_{2}]$ where $c_{1}$ and $c_{2}$ are standard commutators satisfying $w(c_{1}) + w(c_{2}) = w(c)$, $\deg(c_{1}) + \deg(c_{2}) = \deg(c)$, and $\deg(c_{1}) \geq \deg(c_{2})$. Then, $\deg(c_{1}) > 1$ and therefore, $w(c_{1}) > 0$. Suppose that $w(c_{2}) = 0$. Thus, $w(c_{1}) = w(c)$, $\deg(c_{2}) = 1$ and $\deg(c_{1}) = \deg(c) - 1$. Then, by induction hypothesis, $c_{1} = [d_{1},d_{2}]$ where $d_{1}, d_{2}$ are standard commutators with $w(d_{1}), w(d_{2}) > 0$ and $\deg(c_{2}) \geq \deg(d_{2}) > 2$. This contradiction proves that $w(c_{2}) > 0$.
    \end{proof}
    
    \begin{lemma}
        \label{semistandardGenerator}
        Every standard commutator $c$ such that $\deg(c) \geq 2$ can be written as a linear combination of products of $w(c)$ standard commutators with weight $1$.
    \end{lemma}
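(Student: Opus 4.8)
The plan is to induct on the weight $w(c)$. The base case $w(c) = 1$ is immediate, since then $c$ is itself a product of a single weight-$1$ standard commutator (namely $c$). For the inductive step, suppose $w(c) > 1$ and the claim holds for all standard commutators of smaller weight. By \Cref{lem3.15}, we may write $c = [c_1, c_2]$ with $0 < w(c_i) < w(c)$ for $i = 1, 2$, and $w(c_1) + w(c_2) = w(c)$. In particular $\deg(c_i) \geq 2$ for both $i$ (a standard commutator of weight $\geq 1$ cannot be a single variable), so the induction hypothesis applies to each $c_i$: we can write $c_1 = \sum_k \alpha_k\, u_{k,1} u_{k,2} \cdots u_{k, w(c_1)}$ and $c_2 = \sum_\ell \beta_\ell\, v_{\ell,1} v_{\ell,2} \cdots v_{\ell, w(c_2)}$, where all the $u_{k,s}$ and $v_{\ell,t}$ are weight-$1$ standard commutators.

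Now expand $c = c_1 c_2 - c_2 c_1$ using these expressions. This gives $c$ as a linear combination of terms of the form $u_{k,1} \cdots u_{k, w(c_1)} v_{\ell, 1} \cdots v_{\ell, w(c_2)}$ and $v_{\ell, 1} \cdots v_{\ell, w(c_2)} u_{k,1} \cdots u_{k, w(c_1)}$, each of which is a product of exactly $w(c_1) + w(c_2) = w(c)$ standard commutators, all of weight $1$. This is precisely what we want.

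The one point that needs care — and the main (mild) obstacle — is confirming that the induction hypothesis genuinely applies to $c_1$ and $c_2$; that is, that each $c_i$ has $\deg(c_i) \geq 2$. This follows because \Cref{lem3.15} guarantees $w(c_i) > 0$, and a standard commutator of positive weight is by definition of the form $[x, y]$ or $[d_1, d_2]$, hence has degree at least $2$; a single variable has weight $0$. Thus the hypothesis "$\deg \geq 2$" for the recursive calls is met, and the induction on weight closes. (Strictly, one should note that the recursion terminates since $w(c_i) < w(c)$, which is exactly the content of \Cref{lem3.15}.) Everything else in the argument is the routine bilinear expansion of a commutator, and the products obtained are literally concatenations of the products furnished by the inductive hypotheses, so no rewriting into standard-commutator form is needed beyond what the hypotheses already provide.
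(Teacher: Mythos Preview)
Your proof is correct and follows essentially the same approach as the paper's: induction on $w(c)$, invoking \Cref{lem3.15} to write $c=[c_1,c_2]$ with $0<w(c_i)<w(c)$, applying the induction hypothesis to each $c_i$, and then expanding $c=c_1c_2-c_2c_1$. You are in fact slightly more careful than the paper in explicitly noting that $w(c_i)>0$ forces $\deg(c_i)\ge2$, so that the induction hypothesis genuinely applies.
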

    \begin{proof}
        We will prove the lemma by induction on $w(c)$. If $w(c) = 1$ we are done. Let $w(c) > 1$, and suppose that the lemma holds for standard commutators $c'$ satisfying $w(c') <w(c)$. Write $c = [c_{1},c_{2}]$ where $c_{1}$ and $c_{2}$ are standard commutators with $0 < w(c_{1}), w(c_{2}) < n$ (\Cref{lem3.15}). Then, by induction hypothesis, $c_{1}$ and $c_{2}$ are linear combination of products of $w(c_{1})$ and $w(c_{2})$ standard commutators of weight $1$, respectively. Hence, $c = [c_{1},c_{2}] = c_{1}c_{2} - c_{2}c_{1}$ is a linear combination of products of $w(c_1)+w(c_2)=w(c)$ standard commutators of weight $1$.
    \end{proof}

\begin{definition}[{\cite{VinKoVa2004}}]
    \label{def:semistandard}
    A polynomial $c \in \mathbb{F}\langle X^G\rangle$ is called \emph{semistandard commutator} if it has one of the following forms:
    \begin{enumerate}
        \item $c = [z_{i_{1}},y_{i_{2}},\ldots,y_{i_{s}}]$,  $i_{2} \leq \ldots \leq i_{s}$, or
        \item $c = [y_{i_{1}},y_{i_{2}},\ldots,y_{i_{s}}]$, $i_{1} > i_{2} \leq \ldots \leq i_{s}$ and $s \geq 2$.
    \end{enumerate}
\end{definition}

We extend the notion of weight to semistandard commutators by assigning the weight $1$ to them. Next, we extend the notion of a standard commutator to graded polynomials.
\begin{definition}
    A polynomial $c \in \mathbb{F}\langle X^G\rangle$ is called a \emph{$G$-standard commutator} if it is not a variable of trivial degree, and either all variables has homogeneous trivial degree and it is a standard commutator (in the sense of Hall), or it is a semistandard commutator (\Cref{def:semistandard}).
\end{definition}

    Then, combining the previously defined weight for standard and semistandard commutators, we have a well-defined weight for all $G$-standard commutators.
    
    For the following, write $(g^{(n)}) = (\underbrace{g, \ldots, g}_{n \text{ times}})$, where $g \in G$ and $n \in \mathbb{N}$. We define $(g^{(0)})$ to be the empty sequence.

    \begin{lemma}
        \label{GstandardGenerator}
        The vector space $\mathbb{F}\langle X^{G}\rangle/{I(\varepsilon)}$ is spanned by the set of all polynomials
        $$f = y_{i_{1}}\ldots y_{i_{s}}c_{1}\ldots c_{m}$$
        where $i_{1} \leq \ldots \leq i_{s}$ and $c_{1}, \ldots, c_{m}$ are $G$-standard commutators such that the sequence $(\deg_{G}(c_{1})^{(w(c_{1}))},\ldots,\deg_{G}(c_{m})^{(w(c_m))})$ is $\varepsilon$-good.
    \end{lemma}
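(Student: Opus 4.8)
The plan is to argue in two stages. The first, purely structural and independent of $I(\varepsilon)$, is that $\mathbb{F}\langle X^{G}\rangle$ is spanned by \emph{all} products $y_{i_{1}}\cdots y_{i_{s}}c_{1}\cdots c_{m}$ with $i_{1}\le\cdots\le i_{s}$ and each $c_{j}$ a $G$-standard commutator, with no restriction on the degree sequence. The second is that any such product whose weighted degree sequence $(\deg_{G}(c_{1})^{(w(c_{1}))},\ldots,\deg_{G}(c_{m})^{(w(c_{m}))})$ is $\varepsilon$-bad already lies in $I(\varepsilon)$. Granting both, every element of $\mathbb{F}\langle X^{G}\rangle$ is, modulo $I(\varepsilon)$, a linear combination of products of the above shape whose weighted degree sequence is $\varepsilon$-good, which is exactly the assertion of the lemma.

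For the first stage I would follow the classical line (as in \cite{VinKoVa2004}). By Hall's result recalled above, the standard commutators form a basis of the free Lie algebra on $X^{G}$, so by PBW the ordered products $h_{1}\cdots h_{r}$ of standard commutators (with $h_{1}\le\cdots\le h_{r}$ in Hall's order) form a basis of $\mathbb{F}\langle X^{G}\rangle$; the degree-one standard commutators are exactly the variables, so each such product is a product of variables followed by a product of standard commutators of degree $\ge 2$. One moves all even variables to the left, keeping them non-decreasing, by repeatedly using $zy=yz+[z,y]$ and $yy'=y'y+[y,y']$; the loose variables left behind are odd, and an odd variable is itself a $G$-standard commutator (a semistandard commutator of the first type with $s=1$). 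Finally one rewrites every commutator occurring in the expression so that it becomes $G$-standard: an all-even standard commutator already is $G$-standard, while a standard commutator involving odd variables is reduced, by the Jacobi identity together with $[a,bc]=[a,b]c+b[a,c]$ and $[a,b]=ab-ba$, to a linear combination of semistandard commutators and of products of $G$-standard commutators of strictly smaller degree (for instance $[z_{1},z_{2}]=z_{1}z_{2}-z_{2}z_{1}$, $[z,y_{2},y_{1}]=[z,y_{1},y_{2}]+[z,[y_{2},y_{1}]]$, and so on). An induction on total degree closes this stage.

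For the second stage, start from $f=y_{i_{1}}\cdots y_{i_{s}}c_{1}\cdots c_{m}$ with $\varepsilon$-bad weighted degree sequence $\eta$. A $G$-standard commutator of weight $\ge 2$ is not semistandard, hence it is an all-even standard commutator and therefore has degree $1\in G$. Applying \Cref{semistandardGenerator} to each $c_{j}$ with $w(c_{j})\ge 2$ rewrites $f$ as a linear combination of products $y_{i_{1}}\cdots y_{i_{s}}d_{1}\cdots d_{M}$ with each $d_{l}$ a semistandard commutator; since every weight-$w$ block $(1^{(w)})$ of $\eta$ gets replaced by $w$ semistandard commutators each of degree $1$, the degree sequence $(\deg_{G}d_{1},\ldots,\deg_{G}d_{M})$ equals $\eta$ again, and in particular is still $\varepsilon$-bad. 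The key observation is now that a semistandard commutator is a single graded substitution instance of a building block of the same degree: $[z,y_{k_{1}},\ldots,y_{k_{t}}]$ is the image of an odd variable of degree $\deg_{G}[z,y_{k_{1}},\ldots,y_{k_{t}}]$ under the substitution sending that variable to the commutator itself; and, for $t\ge 1$, $[y_{a},y_{b},y_{k_{1}},\ldots,y_{k_{t}}]$ is the image of $[y_{2l},y_{2l+1}]$ under $y_{2l}\mapsto[y_{a},y_{b},y_{k_{1}},\ldots,y_{k_{t-1}}]$ and $y_{2l+1}\mapsto y_{k_{t}}$, while $[y_{a},y_{b}]$ is such an instance trivially. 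Choosing building blocks $\hat g_{1},\ldots,\hat g_{M}$ on pairwise disjoint variables, and giving the odd ones among them the required degrees, the polynomial $\hat g_{1}\cdots\hat g_{M}$ is a generating polynomial $f_{\eta}$ of $I(\varepsilon)$, since its degree sequence is the $\varepsilon$-bad sequence $\eta$; the graded substitution $\sigma$ assembled from the blockwise substitutions above satisfies $\sigma(f_{\eta})=d_{1}\cdots d_{M}$, and since $I(\varepsilon)$ is stable under graded substitutions we get $d_{1}\cdots d_{M}\in I(\varepsilon)$, whence $f\in I(\varepsilon)$ after left multiplication by $y_{i_{1}}\cdots y_{i_{s}}$ and summation.

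The genuine difficulty is confined to the commutator bookkeeping of the first stage — checking that every commutator produced while sorting the variables is indeed expressible through $G$-standard ones so that the induction on degree goes through; this is routine but lengthy. The second stage, by contrast, is short once the "substitution instance" observation is made, and, unlike the finite-dimensional setting of \cite{VinKoVa2004}, it requires no bound on $m$ or on $\sum_{j}w(c_{j})$, the length bound used there being unnecessary here.
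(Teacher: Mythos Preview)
Your two-stage decomposition is sound and in fact more complete than the paper's own argument. For Stage~1 the paper inducts directly on the length of a monomial $p=q\cdot x$: apply the hypothesis to $q$; if $x$ is odd it is already a $G$-standard commutator, and if $x=y$ is even one commutes it leftward past each $c_j$ via $c_jy=yc_j+[c_j,y]$ and normalizes $[c_j,y]$ with exactly the Jacobi move you mention. This route never manufactures mixed Hall commutators on $X^G$, so no auxiliary invariant is needed; your PBW-based approach is correct but must break such commutators open via $[a,b]=ab-ba$ and then re-sort, and the controlling parameter is not total degree alone (which never decreases in those rewrites) but something finer --- so your phrase ``induction on total degree'' hides the extra bookkeeping you yourself flag as ``routine but lengthy.'' Conversely, the paper's proof never addresses your Stage~2 at all: it only shows that products $y_{i_1}\cdots y_{i_s}c_1\cdots c_m$ span $\mathbb{F}\langle X^G\rangle$ with no constraint on the degree sequence, leaving implicit that the $\varepsilon$-bad ones lie in $I(\varepsilon)$. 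Your substitution argument supplies this missing step correctly: reducing via \Cref{semistandardGenerator} preserves the weighted degree sequence because a $G$-standard commutator of weight $\ge2$ is all-even, hence of trivial $G$-degree, as are its weight-$1$ factors; and each resulting semistandard factor is a graded substitution instance of a building block of type (1) or (2) from \Cref{badpolynomial}, so the product is a $T_G$-consequence of some generator $f_\eta$. What the paper's approach buys is a shorter single-pass induction for the spanning part; what yours buys is a clean separation of the structural spanning statement from the role of $I(\varepsilon)$, together with an actual justification of the latter.
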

    \begin{proof}
    We have to proof that these polynomials generate $\mathbb{F}\langle X\rangle$ modulo $I(\varepsilon)$. Let $p = x_{j_{1}}^{(g_{j_{1}})} \ldots x_{j_{n}}^{(g_{j_{n}})} \in \mathbb{F}\langle X^{G}\rangle$. The proof will be by induction on $n$.

    For $n = 1$, $p$ is just a variable and we are done. Suppose that the lemma holds for all products of at most $n-1$ variables. By induction hypothesis, $p = qx_{j_{n}}^{(g_{j_{n}})}$ where $q = \sum\limits_{i}y_{i,1}\ldots y_{i,s}c_{1}\ldots c_{m}$ is a linear combination of polynomials of the required form. If $g_{j_{n}} \neq 1$ there is nothing to do. So, suppose that $g_{j_{n}} = 1$. Then, $p = \sum\limits_{i}y_{i,1}\cdots y_{i,s}c_{1}\ldots c_{m}y_{j_{n}}$. For each summand $p_{i} = y_{i,1}\cdots y_{i,s}c_{1}\ldots c_{m}y_{j_{n}}$ of $p$ we have 
    \[
    p_{i} = y_{i,1}\ldots y_{i,s}c_{1}\cdots c_{m-1}y_{j_{n}}c_{m} + y_{i,1}\cdots y_{i,s}c_{1}\ldots c_{m-1}[c_{m},y_{j_{n}}].
    \]
    By induction, the first summand of each $p_{i}$ is a linear combination of the mentioned polynomials. In the second term we have a polynomial of the wanted form multiplied by a commutator $c = [c_{m},y_{j_{n}}]$. If $\deg_{G}c = 1$ then, from the Hall basis, $c$ is as a linear combination of the polynomials of the wanted form. So, assume that $\deg_{G}c = g \neq 1$. Then, $c_m = [x_{j_{1}}^{(g)},y_{j_{2}},\ldots,y_{j_{n-1}}]$. If $j_{n-1} \leq j_{n}$, then there is nothing to do. Otherwise, we have that
    $$
        c = [x_{j_{1}}^{(g)},y_{j_{2}},\ldots,y_{j_{n}},y_{j_{n-1}}] + [[x_{j_{1}}^{(g)},y_{j_{2}},\ldots,y_{j_{n-2}}],[y_{j_{n-1}}, y_{j_{n}}]]
    $$
    and the result follows by induction.
\end{proof}

We shall improve the statement of the previous lemma. For, we need:
    \begin{lemma}
        \label{infiniteFieldBasis}

        The vector space $\mathbb{F}\langle X^{G}\rangle/{I(\varepsilon)}$ is spanned by the set of all polynomials
        \begin{equation}\label{seqsemistand}
        f = y_{i_{1}}\cdots y_{i_{s}}c_{1}\cdots c_{m}
        \end{equation}
        where $i_{1} \leq \ldots \leq i_{s}$ and $c_{1}, \ldots, c_{m}$ are semistandard commutators such that the sequence $(\deg_{G}(c_{1}),\ldots,\deg_{G}(c_{m}))$ is $\varepsilon$-good.
    \end{lemma}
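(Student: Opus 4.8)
The plan is to start from the spanning set of $\mathbb{F}\langle X^G\rangle/I(\varepsilon)$ produced by \Cref{GstandardGenerator} and to break every commutator factor of weight larger than $1$ into a product of weight-one commutators by means of \Cref{semistandardGenerator}, checking that this operation does not destroy the $\varepsilon$-goodness of the associated degree sequence. Concretely, take a spanning polynomial $f = y_{i_1}\cdots y_{i_s}c_1\cdots c_m$ as in \Cref{GstandardGenerator}, so that $i_1\le\cdots\le i_s$, each $c_j$ is a $G$-standard commutator, and $(\deg_G(c_1)^{(w(c_1))},\ldots,\deg_G(c_m)^{(w(c_m))})$ is an $\varepsilon$-good sequence. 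If every $c_j$ has weight $1$, then each $c_j$ is a semistandard commutator, the sequence above is exactly $(\deg_G(c_1),\ldots,\deg_G(c_m))$, and $f$ already has the shape required by \eqref{seqsemistand}.

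The key structural remark is that a $G$-standard commutator of weight $>1$ must be a standard commutator, in Hall's sense, all of whose variables have trivial degree: semistandard commutators carry weight $1$ by convention, so such a commutator cannot be semistandard, hence falls under the other clause of the definition of a $G$-standard commutator. In particular $\deg_G(c_j)=1$ whenever $w(c_j)>1$. For each such $c_j$ I would invoke \Cref{semistandardGenerator} to write $c_j=\sum_\alpha d_{\alpha,1}\cdots d_{\alpha,w(c_j)}$, a linear combination of products of $w(c_j)$ weight-one standard commutators; since $c_j$ involves only trivial-degree variables, so does each $d_{\alpha,k}$, and by the description of the weight-one standard commutators recalled just before \Cref{lem3.15}, each $d_{\alpha,k}$ is a semistandard commutator of the second kind in \Cref{def:semistandard}, with $\deg_G(d_{\alpha,k})=1$. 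Substituting these expressions into $f$ for all high-weight factors and expanding by distributivity, $f$ becomes a linear combination of polynomials $y_{i_1}\cdots y_{i_s}\tilde c_1\cdots\tilde c_{\tilde m}$ whose leading $y$'s are untouched and whose $\tilde c_l$ are all semistandard commutators.

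It remains to verify that each such summand meets the $\varepsilon$-goodness condition, and this is pure bookkeeping: a high-weight factor $c_j$ contributed to the sequence attached to $f$ the block consisting of $w(c_j)$ copies of $\deg_G(c_j)=1$, and after the substitution the $w(c_j)$ semistandard commutators replacing it contribute the identical block, each of them having degree $1$; a factor $c_j$ that was already semistandard contributes the single entry $\deg_G(c_j)$ to both sequences. Hence the degree sequence attached to each summand coincides verbatim with the $\varepsilon$-good sequence attached to $f$, so it is $\varepsilon$-good as well, and since the polynomials of \Cref{GstandardGenerator} span $\mathbb{F}\langle X^G\rangle/I(\varepsilon)$, so do those of the form \eqref{seqsemistand}. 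I do not anticipate a genuine obstacle here: the only points that need care are the identification of the weight-one trivial-degree standard commutators coming out of \Cref{semistandardGenerator} with the semistandard commutators of \Cref{def:semistandard}, and the observation that the substitution leaves the relevant degree sequence literally unchanged rather than merely replacing it by another good sequence.
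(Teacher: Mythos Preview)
Your proposal is correct and follows essentially the same route as the paper: start from the spanning set of \Cref{GstandardGenerator}, use \Cref{semistandardGenerator} to decompose each weight-$>1$ factor (necessarily a trivial-degree standard commutator) into a product of weight-one standard commutators, and identify these with semistandard commutators of type (2). The paper's proof is terser and leaves the preservation of $\varepsilon$-goodness implicit; your explicit check that the substitution replaces a block of $w(c_j)$ copies of $1$ by $w(c_j)$ copies of $1$, so the degree sequence is literally unchanged, fills in exactly the detail the paper omits.
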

    \begin{proof}
        We need to prove that every polynomial of \Cref{GstandardGenerator} is a linear combination of the polynomials \eqref{seqsemistand}. However, a $G$-standard commutator is either a semistandard commutator, or all variables has homogeneous trivial degree and it is a standard commutator. Thus, by \Cref{semistandardGenerator} a standard commutator of weight $w$ is a linear combination of products of $w$ standard commutators of weight $1$. In particular, a standard commutator of weight 1 is a semistandard commutator.
    \end{proof}

    Next, to include the case where the base field is finite, we need a variation of the definition of E-normal polynomial, given in \cite[Definition 3.7]{GR2020}.
    \begin{definition}
        If $\mathbb{F}$ is a finite field with $q$ elements, then a polynomial $c \in \mathbb{F}\langle X^G\rangle$ is called \emph{E-normal polynomial} if it is either a semistandard commutator or of the form 
        $$c = [y_{i_{1}}^{q} - y_{i_{1}},y_{i_{2}},\ldots,y_{i_{s}}],\quad i_{2} \leq \ldots \leq i_{s},\, i_{1} \notin \{i_{2},\ldots,i_{s}\}$$
    \end{definition}

    \begin{lemma}
        \label{ENormalBasis}
        Let $\mathbb{F}$ be a finite field with $q$ elements. The vector space $\mathbb{F}\langle X^{G}\rangle/{I(\varepsilon)}$ is spanned by the set of all polynomials
        $$f = y_{1}^{a_{1}}\cdots y_{s}^{a_{s}} c_{1}\cdots c_{m}$$
        where $0 \leq  a_{1},\ldots,a_{s} < q$, $(\deg_{G}(c_{1}),\ldots,\deg_{G}(c_{m}))$ is $\varepsilon$-good, and each $c_{i}$ is an E-normal polynomial.
    \end{lemma}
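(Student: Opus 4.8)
The plan is to start from the spanning set provided by \Cref{infiniteFieldBasis} and to push the exponents of the even variables below $q$, using the same commutator calculus as in the proofs of \Cref{GstandardGenerator} and \Cref{infiniteFieldBasis}. So fix a generator $f=y_{i_1}\cdots y_{i_s}c_1\cdots c_m$ as in \Cref{infiniteFieldBasis}, with $i_1\le\cdots\le i_s$, each $c_j$ a semistandard commutator, and $(\deg_G c_1,\ldots,\deg_G c_m)$ an $\varepsilon$-good sequence. Collecting equal even variables, the monomial prefix becomes $y_{j_1}^{a_1}\cdots y_{j_t}^{a_t}$ with $j_1<\cdots<j_t$; relabelling the even variables (legitimate since $I(\varepsilon)$ is a T-ideal), we may assume $j_\ell=\ell$. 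A semistandard commutator is in particular an E-normal polynomial, so $f$ is already of the announced form provided all exponents $a_\ell$ are smaller than $q$. We treat the general case by induction on $a_1+\cdots+a_t$, the total degree of the monomial prefix in the even variables.

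Suppose $a_\ell\ge q$ for some $\ell$, and write $y_\ell^{a_\ell}=y_\ell^{a_\ell-q+1}+y_\ell^{a_\ell-q}(y_\ell^q-y_\ell)$. Substituting this decomposition, $f$ becomes a sum of two polynomials. In the one coming from $y_\ell^{a_\ell-q+1}$ the total even degree of the prefix has dropped (here we use $q\ge2$), and we conclude by induction. In the one coming from $y_\ell^{a_\ell-q}(y_\ell^q-y_\ell)$ a trivial-degree factor $y_\ell^q-y_\ell$ has appeared between the $y_\ell$'s and the $y_{\ell+1}$'s of the prefix, and we move it rightwards until it reaches the commutator block, stopping immediately before the first commutator there. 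Every variable that $y_\ell^q-y_\ell$ must cross on the way is an even variable of index strictly larger than $\ell$, and each such crossing, handled via the Leibniz identity $[uv,w]=u[v,w]+[u,w]v$ applied to $(y_\ell^q-y_\ell)y_k^{b}$ and then expanded, replaces the current term by one with one fewer occurrence of $y_k$, plus terms carrying an iterated commutator of $y_\ell^q-y_\ell$ with even variables of index $>\ell$. Reorganising these iterated commutators into E-normal polynomials $[y_\ell^q-y_\ell,y_{k_1},\ldots,y_{k_r}]$ with $k_1\le\cdots\le k_r$ is carried out exactly as standard commutators were rearranged in the proof of \Cref{GstandardGenerator}; since every $k_i$ exceeds $\ell$, the constraint $\ell\notin\{k_1,\ldots,k_r\}$ from the definition of E-normal polynomial is automatically met, and no degenerate bracket $[y_\ell^q-y_\ell,y_\ell]$ is ever formed. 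Each new commutator is then slid to the commutator block in the same way. Because $y_\ell^q-y_\ell$ never has to cross a $y_\ell$ nor any of the original semistandard commutators $c_i$, no bracket $[y_\ell^q-y_\ell,c_i]$ is produced, and in every resulting term the total even degree of the monomial prefix has strictly decreased, so the induction hypothesis applies.

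It remains to control the homogeneous degrees. Every term obtained above has the form $y_1^{b_1}\cdots y_s^{b_s}d_1\cdots d_k$ with $0\le b_i<q$, where the commutator block $d_1\cdots d_k$ consists of the newly created trivial-degree factors $[y_\ell^q-y_\ell,y_{k_1},\ldots,y_{k_r}]$ (or the factor $y_\ell^q-y_\ell$ itself, which is the E-normal polynomial with empty tail) followed by the original semistandard commutators $c_1,\ldots,c_m$. If $(\deg_G d_1,\ldots,\deg_G d_k)$ is $\varepsilon$-good, the term is of the required form. If it is $\varepsilon$-bad, the term lies in $I(\varepsilon)$ and may be discarded; this rests on the fact that a product of E-normal polynomials whose degree sequence is $\varepsilon$-bad belongs to $I(\varepsilon)$, which is the analogue for $\mathrm{UT}$ of the corresponding statement for $\mathrm{UT}_n$ established in \cite{VinKoVa2004} and \cite[Section~3]{GR2020}, and which transfers to $\mathrm{UT}$ via the descriptions of $\mathcal{G}(\varepsilon)$ and $\mathcal{B}(\varepsilon)$ in \Cref{goodsequencesbadsuite} and \Cref{propositionBadGood} (an $\varepsilon$-bad sequence is $\varepsilon_n$-bad for every $n$, and refining it by splitting entries keeps it bad, which is exactly what the finite-dimensional rewriting uses). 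Establishing this dichotomy, together with the bookkeeping of the trivial-degree factors created by the exponent reduction, is the only delicate point of the argument; everything else is the routine commutator manipulation already used in the previous lemmas.
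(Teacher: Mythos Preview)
Your argument follows the same outline as the paper's: start from \Cref{infiniteFieldBasis} and then reduce the exponents of the even prefix below $q$. The paper compresses the entire second step into a citation of \cite[Claim~3 of the proof of Lemma~3.5]{GR2020}, whereas you spell out the commutator calculus that moves the factor $y_\ell^q-y_\ell$ into the commutator block and then invoke \cite{GR2020} only for the residual fact that a product of E-normal polynomials with $\varepsilon$-bad degree sequence lies in $I(\varepsilon)$. So the strategies coincide; yours is simply an unpacking of the cited claim.

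There is one presentational gap worth fixing. Your induction is on the total even degree of the prefix, and the induction hypothesis, as written, applies to polynomials $y_{i_1}\cdots y_{i_s}c_1\cdots c_m$ with the $c_j$ \emph{semistandard}. After a single reduction step the resulting terms have the form $y_1^{b_1}\cdots y_s^{b_s}\,e_1\cdots e_r\,c_1\cdots c_m$, where the newly created $e_j=[y_\ell^q-y_\ell,y_{k_1},\ldots,y_{k_r}]$ are E-normal but not semistandard. Hence the induction hypothesis as stated does not apply to these terms. The fix is immediate: run the induction over the larger class of polynomials $y_1^{a_1}\cdots y_s^{a_s}d_1\cdots d_k$ with each $d_i$ E-normal and $(\deg_G d_1,\ldots,\deg_G d_k)$ $\varepsilon$-good; the base case and the reduction step are unchanged. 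With this adjustment the argument is complete, modulo the same reliance on \cite{GR2020} that the paper itself makes.
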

    \begin{proof} 
        By \Cref{infiniteFieldBasis}, we already know that $\mathbb{F}\langle X^{G}\rangle/{I(\varepsilon)}$ is spanned by the polynomials of the form $g = y_{1}^{a_{1}}\cdots y_{s}^{a_{s}} c_{1}\cdots c_{m}$, where $a_{1},\ldots,a_{s} \geq 0$, $(\deg_{G} c_{1},\ldots,\deg_{G} c_{m})$ is $\varepsilon$-good, and each $c_{i}$ is a semistandard commutator. By \cite[Claim 3 of the proof of Lemma 3.5]{GR2020}, the exponents are bounded.
    \end{proof}

    Let $f \in \mathbb{F}\langle X^{G}\rangle$, $y$ be a variable with trivial $G$-degree and $d \in \mathbb{N}$. We denote
    \[
    [f, y^{(d)}] = [f, \underbrace{y, \ldots, y}_{d \text{ times}}]. 
    \]
    We extend the notation and write $[f, y^{(0)}] = f$.
    
    The following result can be found in \cite[Lemma 2.5]{S1981} and \cite[Lemma 3.13]{GR2020}.

    \begin{lemma}
        \label{SiderovLemma}
        If $\mathbb{F}$ is a finite field with $q$ elements, $f \in \mathbb{F}\langle X^{G}\rangle$ and $y$ is a variable of trivial homogeneous degree, then
        $$[f,y^{(q)}] = [f,y^{q}].$$
    \end{lemma}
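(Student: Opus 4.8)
The plan is to expand the left-normed commutator $[f,y^{(q)}]$ directly in terms of the operators of left and right multiplication by $y$. Write $L_y,R_y:\mathbb{F}\langle X^G\rangle\to\mathbb{F}\langle X^G\rangle$ for the maps $L_y(h)=yh$ and $R_y(h)=hy$. Since $\mathbb{F}\langle X^G\rangle$ is associative, $L_y$ and $R_y$ commute, and $[h,y]=R_y(h)-L_y(h)$; iterating, $[f,y^{(q)}]=(R_y-L_y)^q(f)$. Because the two operators commute, the binomial theorem applies and gives
$$
[f,y^{(q)}]=\sum_{i=0}^{q}\binom{q}{i}(-1)^i L_y^{\,i}R_y^{\,q-i}(f)=\sum_{i=0}^{q}\binom{q}{i}(-1)^i\,y^i f\,y^{q-i}.
$$

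The key step is then to observe that, since $\mathbb{F}$ has $q=p^k$ elements with $p=\mathrm{char}\,\mathbb{F}$, the binomial coefficient $\binom{q}{i}$ vanishes in $\mathbb{F}$ for every $0<i<q$. This is the Frobenius identity $(1+t)^q=1+t^q$ in $\mathbb{F}[t]$, obtained by iterating $(1+t)^p=1+t^p$ exactly $k$ times and comparing coefficients. Hence only the terms $i=0$ and $i=q$ survive, and
$$
[f,y^{(q)}]=f y^q+(-1)^q\,y^q f.
$$

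Finally I would record that $(-1)^q=-1$ in $\mathbb{F}$ regardless of the characteristic: if $p$ is odd then $q=p^k$ is odd, so $(-1)^q=-1$; if $p=2$ then $-1=1$, so again $(-1)^q=1=-1$. Therefore $[f,y^{(q)}]=f y^q-y^q f=[f,y^q]$, as claimed. I do not anticipate a real obstacle; the only point needing a little care is the vanishing of the middle binomial coefficients, which genuinely uses that $q$ is a prime power rather than merely that $\mathbb{F}$ is finite, and this is automatic since $q=|\mathbb{F}|$.
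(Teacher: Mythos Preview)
Your proof is correct: the reduction to $(R_y-L_y)^q$ on commuting multiplication operators, the vanishing of $\binom{q}{i}$ in $\mathbb{F}$ for $0<i<q$ via the Frobenius identity, and the handling of the sign $(-1)^q$ are all valid. The paper does not actually supply a proof of this lemma but simply cites \cite{S1981} and \cite{GR2020}; your argument is precisely the standard computation underlying those references, so you have filled in what the paper leaves as a citation.
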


    \begin{definition}[{\cite[Definition 3.15]{GR2020}}]\label{Lnormal}
        If $\mathbb{F}$ is a finite field with $q$ elements, then an E-normal polynomial $c \in \mathbb{F}\langle X^G\rangle$ is called \emph{L-normal polynomial} if it has one of the following forms:
        \begin{enumerate} 
            \item $c = [z_{1},y_{2}^{(d_{2})},\ldots,y_{s}^{(d_{s})}]$ with $0 \leq d_{2},\ldots,d_{s} < q$, or
            \item $c = [y_{i},y_{1}^{(d_{1})},\ldots,y_{i}^{(d_{i}-1)},\ldots,y_{s}^{(d_{s})}]$ with $0 \leq d_{1},\ldots,d_{s} < q$, or
            \item $c = [{y_{i}}^{q} - y_{i},y_{1}^{(d_{1})},\ldots,y_{i-1}^{(d_{i-1})},y_{i+1}^{(d_{i+1})},\ldots,y_{s}^{(d_{s})}]$ with $0 \leq d_{1},\ldots,d_{s} < q$.
        \end{enumerate}
    \end{definition}
    
    \begin{lemma}
        \label{spanSetFiniteField}
        Let $\mathbb{F}$ be a finite field with $q$ elements. The vector space $\mathbb{F}\langle X^{G}\rangle/{I(\varepsilon)}$ is spanned by the set of all polynomials
        $$f = y_{1}^{a_{1}}\cdots y_{s}^{a_{s}} c_{1}\cdots c_{m}$$
        where $0 \leq  a_{1},\ldots,a_{s} < q$, $(\deg_{G}(c_{1}),\ldots,\deg_{G}(c_{m}))$ is $\varepsilon$-good, and each $c_{i}$ is an L-normal polynomial.
    \end{lemma}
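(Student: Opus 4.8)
The plan is to bootstrap from \Cref{ENormalBasis}, which already exhibits $\mathbb{F}\langle X^{G}\rangle/I(\varepsilon)$ as the span of the polynomials $y_{1}^{a_{1}}\cdots y_{s}^{a_{s}}c_{1}\cdots c_{m}$ with $0\le a_{j}<q$, with $(\deg_{G}(c_{1}),\ldots,\deg_{G}(c_{m}))$ an $\varepsilon$-good sequence, and with each $c_{i}$ an E-normal polynomial. Since the even prefix $y_{1}^{a_{1}}\cdots y_{s}^{a_{s}}$ is already of the required shape, it suffices to show that, modulo $I(\varepsilon)$, every product $c_{1}\cdots c_{m}$ of E-normal polynomials along an $\varepsilon$-good sequence of $G$-degrees is a linear combination of products of L-normal polynomials along $\varepsilon$-good sequences, together possibly with extra even powers that are pushed to the front and reduced below $q$ exactly as in the proof of \Cref{ENormalBasis} (via \cite[Claim~3 of the proof of Lemma~3.5]{GR2020}). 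Thus the argument is essentially local: one fixes an E-normal factor, which already has one of the shapes (1)--(3) of \Cref{Lnormal} except that some even variable may occur with multiplicity $\ge q$, and normalizes it by driving every such multiplicity below $q$.

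For the normalization I would follow the finite-dimensional computation of \cite[Section~3]{GR2020}, whose three ingredients are available here. Siderov's identity $[f,y^{(q)}]=[f,y^{q}]$ (\Cref{SiderovLemma}) collapses a block of $q$ trailing copies of an even variable $y$ in a left-normed commutator into the single power $y^{q}$; the elementary relation $[y,y^{q}-y]=0$, together with the Jacobi identity, lets one commute a factor $y^{q}-y$ leftward past any number of copies of $y$ inside a left-normed commutator, hence eventually extract it as a separate weight-one factor $y_{j}^{q}-y_{j}$; and the splitting $y^{q}=(y^{q}-y)+y$ turns the high power into a sum of one term carrying a $(y^{q}-y)$-factor and one term with strictly smaller multiplicity of $y$. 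Iterating these moves, and re-sorting the even entries of each commutator into non-decreasing order by Jacobi, rewrites each E-normal polynomial as a linear combination of products whose factors are L-normal polynomials of the forms (1)--(3) of \Cref{Lnormal}, the extracted $y_{j}^{q}-y_{j}$ pieces accounting for the possible appearance of new factors.

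The step I expect to be the main obstacle is checking that this rewriting stays within the span of the claimed set, that is, that it is compatible with $\varepsilon$-goodness: each extraction of a degree-$1$ factor $y_{j}^{q}-y_{j}$ refines the sequence $(\deg_{G}(c_{1}),\ldots,\deg_{G}(c_{m}))$ by inserting a $1$ next to the affected entry, and refining an $\varepsilon$-good sequence need not keep it $\varepsilon$-good. The resolution --- and the only point where the infinite-dimensionality genuinely enters --- is that whenever such a refinement yields an $\varepsilon$-bad sequence, the corresponding product is already a $G$-graded polynomial identity of $(\mathrm{UT},\varepsilon)$, hence lies in $I(\varepsilon)$ and vanishes: this is the inclusion $I(\varepsilon)\subseteq\mathrm{Id}_{G}(\mathrm{UT},\varepsilon)$ recorded after \Cref{badpolynomial}, which for $\mathrm{UT}$ rests on the identity $\mathcal{B}(\varepsilon)=\bigcap_{n}\mathcal{B}(\varepsilon_{n})$ from \Cref{propositionBadGood}. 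Thus the bad terms drop out and the surviving ones are products of L-normal polynomials along $\varepsilon$-good sequences. Granting this compatibility and the termination of the iteration --- both established in \cite{GR2020} by a weight/degree induction that takes place entirely inside the free algebra, the ambient algebra $\mathrm{UT}$ entering only through \Cref{ENormalBasis} and the inclusion $I(\varepsilon)\subseteq\mathrm{Id}_{G}(\mathrm{UT},\varepsilon)$ --- the lemma follows.
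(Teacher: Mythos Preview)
Your overall strategy---bootstrap from \Cref{ENormalBasis} and reduce every E-normal factor to a linear combination of products of L-normal polynomials via \Cref{SiderovLemma}, the splitting $y^{q}=(y^{q}-y)+y$, and a degree induction---is exactly the route the paper takes, and the technical manipulations you outline match the case analysis carried out there.

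The one genuine gap is in your treatment of $\varepsilon$-goodness. You correctly observe that extracting a degree-$1$ factor refines the $G$-degree sequence by inserting a $1$, and that such a refinement need not preserve $\varepsilon$-goodness. But your proposed resolution is circular. You argue that a product along an $\varepsilon$-bad sequence ``is already a $G$-graded polynomial identity of $(\mathrm{UT},\varepsilon)$, hence lies in $I(\varepsilon)$,'' citing the inclusion $I(\varepsilon)\subseteq\mathrm{Id}_{G}(\mathrm{UT},\varepsilon)$. That inclusion goes the wrong way: to pass from membership in $\mathrm{Id}_{G}(\mathrm{UT},\varepsilon)$ to membership in $I(\varepsilon)$ you would need $\mathrm{Id}_{G}(\mathrm{UT},\varepsilon)\subseteq I(\varepsilon)$, which is precisely the content of \Cref{mainTheorem}(i) and cannot be invoked here. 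Neither \Cref{propositionBadGood} nor any feature of infinite-dimensionality helps with this step.

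The correct resolution is purely formal and takes place inside the free algebra. Every L-normal polynomial $c$ of $G$-degree $\eta$ is itself a graded substitution instance of one of the basic generators listed in \Cref{badpolynomial}: if $\eta\neq 1$ then $c$ is a homogeneous polynomial of degree $\eta$, hence an image of the variable $z$; if $\eta=1$ then either $c=y^{q}-y$, or $c=[c',y']$ with $c',y'$ of trivial $G$-degree, hence an image of $[y_{2i},y_{2i+1}]$. Since the generators $f_{1}\cdots f_{m}$ in \Cref{badpolynomial} use disjoint variable sets, a single graded endomorphism sends $f_{1}\cdots f_{m}$ to $c_{1}\cdots c_{m}$. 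Thus any product of L-normal polynomials along an $\varepsilon$-bad sequence lies in $I(\varepsilon)$ directly from the definition, and the surviving terms are exactly those along $\varepsilon$-good sequences. With this correction your argument goes through and coincides with the paper's.
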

\begin{proof}
    By \Cref{ENormalBasis}, we need to prove that an E-normal polynomial $c$ is linear combination of products of L-normal polynomials, modulo $I(\varepsilon)$. If $c$ has degree less or equal than $q$, then either $c = y^{q} - y$ for some variable $y$ of $G$-degree $1$, or every variable on $c$ appears less than $q$ times. In both cases, $c$ is L-normal.
        
    Now, let $n \geq q$ and suppose that the lemma holds for all $c$ with $\deg(c) \leq n$. Let $c$ be an E-normal commutator with $\deg(c) = n + 1$. If $c$ is an L-normal polynomial we are done. So, suppose that there is some variable $y$ that appears in $c$ at least $q$ times in the last position. Thus, we have $c = [f, y^{(q)}]$ and by \Cref{SiderovLemma}, 
        $$c = [f, y^{q}] = [f, y^{q} - y] + [f, y] = f(y^{q} - y) - (y^{q} - y)f + [f, y].$$
        
        So by induction, $c$ is a linear combination of L-normal polynomials, modulo $I(\varepsilon)$.
        
        Now, suppose that the last variable in $c$ appears less than $q$ times and some variable $y$ appears in $c$ at least $q$ times. Let $l \leq n - q$ be an index such that $y = y_{l} = \ldots = y_{l+q-1} \neq y_{l+q}$, then we have
        \begin{equation*}
            \begin{split}
                c &= [f, y^{(q)}, y_{i_{l+q}},\ldots,y_{i_{n}}] = [f, y^{q} - y, y_{i_{l+q}},\ldots,y_{i_{n}}] + [f, y, y_{i_{l+q}},\ldots,y_{i_{n}}]\\
                &= [f(y^{q} - y) - (y^{q} - y)f, y_{i_{l+q}},\ldots,y_{i_{n}}] + [f, y, y_{i_{l+q}},\ldots,y_{i_{n}}]\\ 
                &= [f(y^{q} - y), y_{i_{l+q}},\ldots,y_{i_{n}}] - [(y^{q} - y)f, y_{i_{l+q}},\ldots,y_{i_{n}}] + [f, y, y_{i_{l+q}},\ldots,y_{i_{n}}]
            \end{split}
        \end{equation*}
        Recall the formula
        $$[ab,x_{1},\ldots,x_{m}] = \sum_{\{j_{1},\ldots,j_{r}\} \subseteq \{1,\ldots,m\}} [a, x_{j_{1}},\ldots,x_{j_{r}}][b, x_{k_{1}},\ldots,x_{k_{s}}]$$
        where $j_{1} < \ldots < j_{r}$, $k_{1} < \ldots < k_{s}$ and $\{j_{1},\ldots,j_{r}\}\cup\{k_{1}, \ldots, k_{s}\} = \{1,\ldots,m\}$. If $f$ is an E-normal polynomial, we conclude that $c$ is a a linear combination of products of E-normal polynomials with degree less than $n + 1$, modulo $I(\varepsilon)$. Now, assume that $f$ is a variable with trivial $G$-degree. If $f \leq y_{l+q}$, then
        $$c = [f, y, y_{i_{l+q}},\ldots,y_{i_{n}}] - [y^{q} - y, f, y_{i_{l+q}},\ldots,y_{i_{n}}]$$
        and we are done. On the other hand, if $f > y_{l+q}$ then let $t$ be the biggest index such that $f > y_{t}$. So,
        \begin{equation*}
            \begin{split}
                d &= [f(y^{q} - y), y_{i_{l+q}},\ldots,y_{i_{t}},\ldots,y_{i_{n}}] - [(y^{q} - y)f, y_{i_{l+q}},\ldots,y_{i_{t}},\ldots,y_{i_{n}}]\\
                &= [\sum_{\{j_{1},\ldots,j_{r}\} \subseteq \{i_{l+q},\ldots,i_{t}\}} [f, y_{j_{1}},\ldots,y_{j_{r}}][y^{q}-y, y_{k_{1}},\ldots,y_{k_{s}}],y_{i_{t+1}},\ldots,y_{i_{n}}]\\
                &- [\sum_{\{j_{1},\ldots,j_{r}\} \subseteq \{i_{l+q},\ldots,i_{t}\}} [y^{q}-y, y_{k_{1}},\ldots,y_{k_{s}}][f,y_{j_{1}},\ldots,y_{j_{r}}],y_{i_{t+1}},\ldots,y_{i_{n}}]\\
                &= [d' + f[y^{q} - y, y_{i_{l+q}},\ldots,y_{i_{t}}] - [y^{q} - y, y_{i_{l+q}},\ldots,y_{i_{t}}]f,y_{i_{t+1}},\ldots,y_{i_{n}}]\\
                &= [d',y_{i_{t+1}},\ldots,y_{i_{n}}] - [y^{q} - y, y_{i_{l+q}},\ldots,y_{i_{t}},f,y_{i_{t+1}},\ldots,y_{i_{n}}]
            \end{split}
        \end{equation*}.
        
        Since $d'$ is a linear combination of products of two E-normal polynomials and $y_{i_{l+q}} \leq \ldots \leq y_{i_{t}} < f \leq y_{i_{t+1}} \leq \ldots \leq y_{i_{n}}$ we have by induction that the proposition holds for the first summand of $d$ and repeating the procedure we conclude that it holds also for the second summand. By consequence, it holds for $c$ as well.
        
        Finally, suppose that $c$ has a variable $y$ of trivial $G$-degree in the first position such that it appears in $c$ exactly $q$ times and all the other variables appears less than $q$ times. Then, we have that $y$ appears $q - 1$ times at the position $l + 1$ in $c$, where $2 < l + 1 \leq n - q + 3$. So, $c$ has the form $c = [y, y_{i_{1}},\ldots,y_{i_{l-1}},y^{(q-1)},y_{i_{l+q-1}},\ldots,y_{i_{n}}]$. If $l = 2$, we have
        \begin{equation*}
            \begin{split}
                c &= [y, y_{i_{1}},y^{(q-1)},y_{i_{q+1}},\ldots,y_{i_{n}}] = - [y_{i_{1}},y^{(q)},y_{i_{q+1}},\ldots,y_{i_{n}}]\\
                &= - [y_{i_{1}},y^{q},y_{i_{q+1}},\ldots,y_{i_{n}}] = [y^{q},y_{i_{1}},y_{i_{q+1}},\ldots,y_{i_{n}}]\\
                &= [y^{q}-y,y_{i_{1}},y_{i_{q+1}},\ldots,y_{i_{n}}] + [y,y_{i_{1}},y_{i_{q+1}},\ldots,y_{i_{n}}]
            \end{split}
        \end{equation*}
        and we are done. If $l>2$, then
        \begin{equation*}
            \begin{split}
                c &= [f,y_{i_{l-1}},y^{(q-1)},y_{i_{l+q-1}},\ldots,y_{i_{n}}] = [f y_{i_{l-1}} - y_{i_{l-1}}f, y^{(q-1)},y_{i_{l+q-1}},\ldots,y_{i_{n}}]\\
                &= [\sum_{r \leq q-1} [f, y^{(r)}][y_{i_{l-1}}, y^{(q-r-1)}] - [y_{i_{l-1}}, y^{(q-r-1)}][f, y^{(r)}],y_{i_{l+q-1}},\ldots,y_{i_{n}}]\\
                &= [[f,y^{(q-1)}]y_{i_{l-1}} - y_{i_{l-1}}[f,y^{(q-1)}],y_{i_{t+1}},\ldots,y_{i_{n}}]\\ 
                &+ [\sum_{r < q-1} [f, y^{(r)}][y,y_{i_{l-1}}, y^{(q-r-2)}] - [y,y_{i_{l-1}}, y^{(q-r-2)}][f, y^{(r)}],y_{i_{l+q-1}},\ldots,y_{i_{n}}]\\
                &= [[f,y^{(q-1)}]y_{i_{l-1}} - y_{i_{l-1}}[f,y^{(q-1)}],y_{i_{l+q-1}},\ldots,y_{i_{n}}] + d
            \end{split}
        \end{equation*}
        where $d$ has the required form. Write $g = [f,y^{(q-1)}] = [y, y_{i_{1}},\ldots,y_{i_{l-2}},y^{(q-1)}]$. By induction hypothesis, $g$ is linear combination of products of L-normal polynomials such that the last variable is $y_{i_{l-2}}$, modulo $I(\varepsilon)$. Since $i_{l-2} \leq i_{l-1}$, the lemma holds valid for the the polynomial $[g y_{i_{l-1}} - y_{i_{l-1}} g, y_{i_{l+q-1}},\ldots,y_{i_{n}}]$, which equals $[g, y_{y_{i_{l-1}}}, y_{i_{l+q-1}},\ldots,y_{i_{n}}]$. Therefore, it holds for $c$ as well.
    \end{proof}

    \begin{theorem}
    \label{mainTheorem}
     Let $\mathbb{F}$ be an arbitrary field (finite or infinite), $G$ an arbitrary group, and let $\varepsilon:\mathbb{N}\to G$ define a $G$-grading on $\mathrm{UT}$ via $\deg_{G} e_{i,i+1}=\varepsilon(i)$. Then:
        \begin{enumerate}
        \renewcommand{\labelenumi}{(\roman{enumi})}
            \item $\mathrm{Id}_{G}(\mathrm{UT}, \varepsilon)$ is generated by all polynomials of the kind $f_\eta$, where $\eta$ is an $\varepsilon$-bad sequence (see \Cref{badpolynomial}).
            \item A vector space basis of the relatively free $G$-graded algebra of $(\mathrm{UT},\varepsilon)$ consists of the polynomials
            \begin{equation}\label{basis}
            f = y_{1}^{a_{1}}\cdots y_{s}^{a_{s}} c_{1}\cdots c_{m}
            \end{equation}
            where $0 \leq  a_{1},\ldots,a_{s} < |\F|$, $(\deg_{G} c_{1},\ldots,\deg_{G} c_{m})$ is $\varepsilon$-good, and each $c_{i}$ is:
            \begin{enumerate}
                \item a semistandard commutator, if $\mathbb{F}$ is infinite (\Cref{def:semistandard}),
                \item an L-normal polynomial, if $\mathbb{F}$ is finite with $q$ elements (\Cref{Lnormal}).
            \end{enumerate}
        \end{enumerate}
    \end{theorem}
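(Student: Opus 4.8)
The plan is to obtain (i) and (ii) together. Recall from the discussion following \Cref{badpolynomial} that $I(\varepsilon)\subseteq\mathrm{Id}_G(\mathrm{UT},\varepsilon)$, and that by \Cref{infiniteFieldBasis} (when $\mathbb{F}$ is infinite) and \Cref{spanSetFiniteField} (when $\mathbb{F}$ is finite) the set $B$ of polynomials appearing in \eqref{basis} spans $\mathbb{F}\langle X^G\rangle/I(\varepsilon)$; a fortiori $B$ spans the relatively free $G$-graded algebra $\mathbb{F}\langle X^G\rangle/\mathrm{Id}_G(\mathrm{UT},\varepsilon)$. Hence everything reduces to proving that $B$ is linearly independent modulo $\mathrm{Id}_G(\mathrm{UT},\varepsilon)$. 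Granting this, (ii) is immediate. Moreover $B$ is then linearly independent modulo $I(\varepsilon)$ as well (because $I(\varepsilon)\subseteq\mathrm{Id}_G(\mathrm{UT},\varepsilon)$), so $B$ is in fact a basis of $\mathbb{F}\langle X^G\rangle/I(\varepsilon)$; thus the canonical surjection $\mathbb{F}\langle X^G\rangle/I(\varepsilon)\to\mathbb{F}\langle X^G\rangle/\mathrm{Id}_G(\mathrm{UT},\varepsilon)$ carries a basis onto a basis, so it is an isomorphism and $\mathrm{Id}_G(\mathrm{UT},\varepsilon)=I(\varepsilon)$ — which by definition is generated by the polynomials $f_\eta$ with $\eta$ an $\varepsilon$-bad sequence. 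This gives (i).

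To prove the linear independence I would reduce to the finite-dimensional case. Suppose $\sum_{j=1}^{k}\alpha_j f^{(j)}\in\mathrm{Id}_G(\mathrm{UT},\varepsilon)$ with $\alpha_j\in\mathbb{F}$ and $f^{(1)},\ldots,f^{(k)}\in B$ pairwise distinct; write each $f^{(j)}=y_1^{a_1^{(j)}}\cdots y_{s_j}^{a_{s_j}^{(j)}}c_1^{(j)}\cdots c_{m_j}^{(j)}$ as in \eqref{basis}, so that $\eta^{(j)}:=(\deg_G c_1^{(j)},\ldots,\deg_G c_{m_j}^{(j)})$ is $\varepsilon$-good. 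By \Cref{propositionBadGood}(2) we have $\mathcal{G}(\varepsilon)=\bigcup_{n\in\mathbb{N}}\mathcal{G}(\varepsilon_n)$, and since there are only finitely many sequences $\eta^{(1)},\ldots,\eta^{(k)}$, we may fix an $n$ large enough that every $\eta^{(j)}$ is $\varepsilon_n$-good. Then each $f^{(j)}$ lies in the set $B_n$, which is — verbatim — the basis from \eqref{basis} attached to $(\mathrm{UT}_n,\varepsilon_n)$ (one uses here that an $\varepsilon_n$-good sequence automatically has length at most $n-1$, so the cut-off ``$r\le n$'' of the finite-dimensional theorem is no restriction and the two normal-form descriptions agree). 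On the other hand $\mathrm{UT}_n$ is a $G$-graded subalgebra of $\mathrm{UT}$, so $\mathrm{Id}_G(\mathrm{UT},\varepsilon)\subseteq\mathrm{Id}_G(\mathrm{UT}_n,\varepsilon_n)$ (this is also a case of \Cref{lem1}), whence $\sum_j\alpha_j f^{(j)}\in\mathrm{Id}_G(\mathrm{UT}_n,\varepsilon_n)$. By the finite-dimensional classification — \cite{VinKoVa2004} for infinite $\mathbb{F}$, \cite{GR2020} for finite $\mathbb{F}$ — the set $B_n$ is a vector space basis of the relatively free $G$-graded algebra of $(\mathrm{UT}_n,\varepsilon_n)$, and since the $f^{(j)}$ are pairwise distinct elements of $B_n$ this forces $\alpha_1=\cdots=\alpha_k=0$. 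This proves the linear independence, and with it the theorem.

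The genuinely substantial work is already behind us: it is the combinatorial bookkeeping of \Cref{GstandardGenerator}--\Cref{spanSetFiniteField} producing the spanning set $B$, together with the linear-independence half of the finite-dimensional theorem, which I would quote from \cite{VinKoVa2004,GR2020}. The main point that must be handled with care in the present argument is precisely this quotation: the cited results must be used as providing an actual \emph{basis} of the relatively free algebra of $\mathrm{UT}_n$ — equivalently, linear independence of $B_n$ modulo $\mathrm{Id}_G(\mathrm{UT}_n,\varepsilon_n)$ — and not merely a set of generators of $\mathrm{Id}_G(\mathrm{UT}_n,\varepsilon_n)$; and one must check that the normal forms used there (semistandard commutators in the infinite case, L-normal polynomials in the finite case, with $\varepsilon_n$-good degree sequences and exponents bounded by $|\mathbb{F}|$) coincide with those of \eqref{basis} — which they do, since \Cref{def:semistandard} and \Cref{Lnormal} were taken from those very papers.
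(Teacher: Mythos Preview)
Your proof is correct and follows essentially the same approach as the paper: reduce to showing linear independence of $B$ modulo $\mathrm{Id}_G(\mathrm{UT},\varepsilon)$, use \Cref{propositionBadGood} to find a single $n$ making all occurring degree sequences $\varepsilon_n$-good, and then invoke the finite-dimensional basis results from \cite{VinKoVa2004,GR2020}. Your write-up is in fact somewhat more explicit than the paper's, in spelling out why linear independence yields $I(\varepsilon)=\mathrm{Id}_G(\mathrm{UT},\varepsilon)$ and in flagging that it is the \emph{basis} statement (not merely the generating set for identities) that must be quoted from the finite-dimensional papers.
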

    \begin{proof}
        By \Cref{infiniteFieldBasis} and \Cref{spanSetFiniteField} and since $I(\varepsilon) \subseteq Id_{G}(UT, \varepsilon)$ we need to prove that the set of polynomials of the kind \eqref{basis} is linearly independent, modulo $\mathrm{Id}_G(\mathrm{UT},\varepsilon)$.

        Let $\sum \lambda  y_{1}^{a_{1}}\cdots y_{s}^{a_{s}} c_{1}\cdots c_{m}\in\mathrm{Id}_G(\mathrm{UT},\varepsilon)$, where $\lambda \in \mathbb{F}$ and $f = y_{1}^{a_{1}}\cdots y_{s}^{a_{s}} c_{1}\cdots c_{m}$ is as in \eqref{basis}. Clearly each sequence $(\deg_{G} c_{1},\ldots,\deg_{G} c_{m})$ is $\varepsilon$-good. By \Cref{propositionBadGood}, $\mathcal{G}(\varepsilon)= \bigcup\limits_{n\in\mathbb{N}}\mathcal{G}(\varepsilon_n)$, so there exist an $r \in \mathbb{N}$ such that  $(\deg_{G} c_{1},\ldots,\deg_{G} c_{m})$ is $\varepsilon_{r}$-good for each summand. By \Cref{lem1}, the map
        $$\varphi:\mathbb{F}\langle X^{G}\rangle/\mathrm{Id}_{G}(\mathrm{UT}, \varepsilon) \to \mathbb{F}\langle X^{G}\rangle/\mathrm{Id}_{G}(\mathrm{UT_{r}}, \varepsilon_{r})$$
        is a well-defined surjective homomorphism of algebras. From the basis of the relatively free algebra of $(\mathrm{UT}_r,\varepsilon_r)$ (see \cite[Theorem 2.8]{VinKoVa2004} for infinite field, and \cite[Theorem 3.21]{GR2020} for finite field), we have that each $\lambda$ has to be 0. Therefore, the set of polynomials of the kind \eqref{basis} is linearly independent, modulo $\mathrm{Id}_G(\mathrm{UT},\varepsilon)$.
    \end{proof}
    \begin{remark}
    If $\varepsilon$ is any $G$-grading on $\mathrm{UT}$, then it is known that the codimension sequence satisfies $c_m(\mathrm{UT})\le c_m^G(\mathrm{UT},\varepsilon)$ (see, for instance, \cite{BGR1998}). Thus, $c_m^G(\mathrm{UT},\varepsilon)\ge m!$ for all $m\in\mathbb{N}$. This happens since $\mathrm{UT}$ is not a PI-algebra.
    \end{remark}

    Note that if $G = \{1\}$ then any finite sequence of elements on $G$ is $\varepsilon$-good and we conclude the well-known result that $\mathrm{UT}$ has no polynomial identities. Furthermore, we obtain the following corollary.

    \begin{corollary}
        Let $\mathbb{F}$ be an arbitrary field. Then, a vector space basis of the free associative algebra $\mathbb{F}\langle X\rangle$ consists of the polynomials
        $$f = x_{1}^{a_{1}}\cdots x_{s}^{a_{s}} c_{1}\cdots c_{m}$$
        where $0 \leq  a_{1},\ldots,a_{s} < |\F|$ and each $c_{i}$ is:
        \begin{enumerate}
            \item a standard commutator with weight $1$, if $\mathbb{F}$ is infinite,
            \item of the form
            \begin{enumerate} 
                \item $c_{i} = [x_{i},x_{1}^{(d_{1})},\ldots,x_{i}^{(d_{i}-1)},\ldots,x_{s}^{(d_{s})}]$, $0 \leq d_{1},\ldots,d_{s} < q$, $s \geq 2$ or
                \item $c_{i} = [{x_{i}}^{q} - x_{i},x_{1}^{(d_{1})},\ldots,x_{i-1}^{(d_{i-1})},x_{i+1}^{(d_{i+1})},\ldots,x_{s}^{(d_{s})}]$, $0 \leq d_{1},\ldots,d_{s} < q$,
            \end{enumerate}
            if $\mathbb{F}$ is finite with $q$ elements. \qed
        \end{enumerate}
    \end{corollary}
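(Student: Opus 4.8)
The plan is to deduce the corollary directly from \Cref{mainTheorem} by specializing to the trivial group $G=\{1\}$. First I would observe that in this case there are no odd variables, so $\mathbb{F}\langle X^G\rangle=\mathbb{F}\langle X\rangle$ after identifying each $x_i^{(1)}$ with $x_i$. Moreover $\mathrm{Id}_G(\mathrm{UT},\varepsilon)=\mathrm{Id}(\mathrm{UT})=\bigcap_{n\in\mathbb{N}}\mathrm{Id}(\mathrm{UT}_n)=0$, since $\mathrm{UT}$ is not a PI-algebra (as recalled in the introduction). Hence the relatively free $G$-graded algebra of $(\mathrm{UT},\varepsilon)$ is $\mathbb{F}\langle X\rangle$ itself, and part (ii) of \Cref{mainTheorem} already furnishes a vector space basis of $\mathbb{F}\langle X\rangle$; it remains only to rewrite that basis in the stated form.

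Next I would dispose of the $\varepsilon$-good condition. When $G=\{1\}$, every finite sequence of elements of $G$ is the constant sequence $(1,\ldots,1)$, which is $\varepsilon$-good: for any length $m$, the product $e_{1,2}e_{2,3}\cdots e_{m,m+1}$ is a nonzero product of $m$ matrix units in $J(\mathrm{UT})$, each of degree $1$. (Alternatively, this is immediate from \Cref{monoidGenerator}, since the empty set generates the trivial group.) Therefore the requirement that $(\deg_G c_1,\ldots,\deg_G c_m)$ be $\varepsilon$-good in \eqref{basis} is vacuous and may be omitted, and the exponent bound $0\le a_i<|\mathbb{F}|$ is unchanged.

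Finally I would identify the commutator building blocks in the two cases. If $\mathbb{F}$ is infinite, a semistandard commutator of type (1) in \Cref{def:semistandard} contains an odd variable $z_{i_1}$, hence cannot occur when $G=\{1\}$; a semistandard commutator of type (2) is $[x_{i_1},x_{i_2},\ldots,x_{i_s}]$ with $i_1>i_2\le\cdots\le i_s$ and $s\ge2$, which by the remark following \Cref{semistandardGenerator} is exactly a standard commutator of weight $1$. This gives case (1) of the corollary. If $\mathbb{F}$ is finite with $q$ elements, an L-normal polynomial of type (1) in \Cref{Lnormal} again contains an odd variable and is excluded, while types (2) and (3) are precisely the two forms (a) and (b) listed in case (2) of the corollary (upon renaming each $y_i$ as $x_i$). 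Combining these observations with \Cref{mainTheorem}(ii) yields the asserted basis. There is essentially no obstacle here beyond bookkeeping: the only points requiring a moment's care are that the trivial grading makes all sequences $\varepsilon$-good and that the relatively free algebra collapses to $\mathbb{F}\langle X\rangle$ because $\mathrm{UT}$ satisfies no nonzero polynomial identity.
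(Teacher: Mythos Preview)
Your proposal is correct and follows essentially the same approach as the paper: the corollary is stated with a \qed and is preceded only by the remark that for $G=\{1\}$ every finite sequence is $\varepsilon$-good and $\mathrm{UT}$ has no ordinary identities, so the corollary is simply the specialization of \Cref{mainTheorem}(ii) to the trivial grading, exactly as you argue. One small citation slip: the observation that standard commutators of weight~$1$ are precisely the left-normed commutators $[x_{i_1},x_{i_2},\ldots,x_{i_m}]$ with $i_1>i_2\le\cdots\le i_m$ appears immediately after the definition of weight (just before \Cref{lem3.15}), not after \Cref{semistandardGenerator}.
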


\section{Conditions for the existence of graded identities}\label{sec:conditionsID} In this section, we shall find conditions so that a grading on $\mathrm{UT}$ satisfies some graded polynomial identity. As a consequence of our main theorem, we have that $\mathrm{Id}_G(\mathrm{UT},\varepsilon) = 0$ if and only if every finite sequence of elements of $G$ is $\varepsilon$-good. The next proposition simplifies this condition.

\begin{lemma}
    Let $G$ be a group and let $\varepsilon:\mathbb{N}\to G$ define a $G$-grading on $\mathrm{UT}$, and let $A$ be a set of generators of $G$ as a monoid. Then, $\mathrm{Id}_{G}(\mathrm{UT},\varepsilon)=0$ if, and only if, every finite sequence of elements of $A$ is $\varepsilon$-good.
\end{lemma}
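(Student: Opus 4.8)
The plan is to derive the statement by combining two facts already available in the text. As noted just before the lemma, \Cref{mainTheorem} shows that $\mathrm{Id}_G(\mathrm{UT},\varepsilon)$ is generated by the polynomials $f_\eta$ with $\eta$ an $\varepsilon$-bad sequence; consequently $\mathrm{Id}_G(\mathrm{UT},\varepsilon)=0$ exactly when there is no $\varepsilon$-bad sequence, that is, when every finite sequence of elements of $G$ is $\varepsilon$-good (for the non-trivial direction, a single $\varepsilon$-bad $\eta$ gives a nonzero $f_\eta\in\mathrm{Id}_G(\mathrm{UT},\varepsilon)$). So the first step reduces the lemma to the equivalence: every finite sequence over $A$ is $\varepsilon$-good if, and only if, every finite sequence over $G$ is $\varepsilon$-good. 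This is exactly \Cref{monoidGenerator}, which I would simply cite; but since only one implication has content, I would also spell out the short direct argument.

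The implication needing work assumes every finite sequence over $A$ is $\varepsilon$-good and shows that an arbitrary $(\eta_1,\dots,\eta_m)\in G^m$ is $\varepsilon$-good. The key step is a concatenation argument: write each $\eta_i=a_{i,1}\cdots a_{i,k_i}$ as a product of elements of $A$ and form the single sequence $(a_{1,1},\dots,a_{1,k_1},a_{2,1},\dots,a_{m,k_m})$ over $A$, which is $\varepsilon$-good by hypothesis. Choosing realizing matrix units $r_{i,j}\in J(\mathrm{UT})$ with nonzero total product, associativity forces each block $s_i:=r_{i,1}\cdots r_{i,k_i}$ to be a nonzero matrix unit in $J(\mathrm{UT})$ with $\deg_G s_i=\eta_i$, and $s_1\cdots s_m\neq 0$; hence $(\eta_1,\dots,\eta_m)\in\mathcal{G}(\varepsilon)$. (Alternatively one can run this through the inductive description of $\mathcal{G}(\varepsilon)$ in \Cref{goodsequencesbadsuite}, realizing each $\eta_i$ as a contracted consecutive subword of $\varepsilon$.) The reverse implication is immediate from $A\subseteq G$, and feeding the equivalence back into the first step completes the proof.

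The only genuine subtlety — and the step I expect to be the main (if minor) obstacle — is the degree-$1$ case: monoid generation only guarantees a priori that $1$ is the \emph{empty} product over $A$, which would leave block $i$ empty and break the regrouping. To handle this I would first record that, as long as $A\neq\emptyset$, every element of $G$ is in fact a \emph{nonempty} product of elements of $A$: for $g\neq 1$ this is automatic, and for $g=1$ one picks $a\in A$, writes $a^{-1}=b_1\cdots b_s$ over $A$ (possibly $s=0$), so that $1=a\,b_1\cdots b_s$ is a product of $s+1\geq 1$ elements of $A$. The degenerate case $A=\emptyset$ forces $G=\{1\}$, where both sides of the equivalence hold trivially since $\mathrm{UT}$ is not a PI-algebra. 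With every $k_i\geq 1$, the concatenation argument above then applies verbatim.
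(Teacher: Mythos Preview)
Your proposal is correct and follows the same approach as the paper, which simply cites \Cref{monoidGenerator} and \Cref{mainTheorem}. The extra care you take with the identity element (ensuring each $\eta_i$ is a \emph{nonempty} product over $A$, and handling $A=\emptyset$ separately) fills in a detail that the paper leaves implicit in its bare \qed\ for \Cref{monoidGenerator}.
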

    \begin{proof}
        It follows from \Cref{monoidGenerator} and \Cref{mainTheorem}.
    \end{proof}

   If $G$ is a finite group, then a set generates $G$ as a group if and only if it generates $G$ as a monoid. Thus, we obtain:

    \begin{Prop}
        Let $G$ be a finite group and let $\varepsilon:\mathbb{N}\to G$ define a $G$-grading on $\mathrm{UT}$. If $A$ is a set of generators of G (as a group), then $\mathrm{Id}_{G}(\mathrm{UT},\varepsilon)=0$ if and only if every finite sequence of elements of $A$ is $\varepsilon$-good. \qed
    \end{Prop}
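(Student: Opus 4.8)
The plan is to obtain this as an immediate corollary of the preceding lemma, whose hypothesis asks $A$ to generate $G$ \emph{as a monoid}. Thus the only thing to verify is that, for a finite group $G$, a subset $A\subseteq G$ generates $G$ as a group if and only if it generates $G$ as a monoid. The reverse implication is trivial. For the forward one: given $a\in A$, finiteness of $G$ forces $a$ to have finite order $n$, so $a^{-1}=a^{\,n-1}$ lies in the submonoid of $G$ generated by $A$; since every element of $G=\langle A\rangle$ is a word in $A\cup A^{-1}$, it follows that the submonoid generated by $A$ is all of $G$.

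Once this observation is in place, the proposition follows directly: $A$ is a monoid generating set of $G$, so the preceding lemma (equivalently, \Cref{monoidGenerator} together with \Cref{mainTheorem}) yields $\mathrm{Id}_{G}(\mathrm{UT},\varepsilon)=0$ if and only if every finite sequence of elements of $A$ is $\varepsilon$-good.

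I do not foresee any genuine obstacle here; the argument is essentially a one-line reduction, which is why the statement can be left with a $\qed$. The only point worth flagging is that the equivalence between monoid- and group-generation really does use finiteness of $G$, and this is the reason the statement is restricted to finite groups: for $G=\mathbb{Z}$ and $A=\{1\}$, the set $A$ generates $G$ as a group, and for the grading given by $\varepsilon\equiv 1$ every finite sequence of elements of $A$ is $\varepsilon$-good (indeed $e_{1,2}e_{2,3}\cdots e_{m,m+1}=e_{1,m+1}\neq 0$), yet $\mathrm{Id}_{\mathbb{Z}}(\mathrm{UT},\varepsilon)\neq 0$ — for instance any odd variable of negative degree is a graded identity, since the corresponding one-term sequence is $\varepsilon$-bad. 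Hence some hypothesis strictly stronger than ``$A$ generates $G$ as a group'' is indispensable, and finiteness of $G$ is exactly what makes it superfluous.
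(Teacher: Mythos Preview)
Your proposal is correct and follows exactly the paper's approach: the paper states in one line that for a finite group, generating as a group and as a monoid coincide, and then marks the proposition with a \qed{} as an immediate corollary of the preceding lemma. Your write-up simply spells out the standard finite-order argument for that equivalence and adds a helpful counterexample showing finiteness is essential, neither of which the paper includes.
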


     For each $i \in \mathbb{N}$ let $\rho_{i} = (d_{1}, d_{2}, \ldots)$, where $d_j=\deg_{G} e_{i,i+j}$. The following lemma emerges as a direct consequence of the definition of $\varepsilon$-good sequence and of the fact that $e_{ij}e_{jk} = e_{ik}$.

    \begin{lemma}
        \label{goodSequenceLineSequence}
        A finite sequence $\eta = (\eta_{1}, \ldots, \eta_{s})$ of elements on $G$ is $\varepsilon$-good if and only if there exists $i \in \mathbb{N}$ such that 
        $(\eta_{1}, \eta_{1}\eta_{2}, \ldots, \eta_{1}\cdots\eta_{s})$ is a subsequence (not necessarily consecutive) of $\rho_{i}$. \qed
    \end{lemma}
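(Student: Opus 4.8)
The plan is to translate both implications into the staircase pattern of matrix units whose product is nonzero. The two elementary facts I would isolate first are: (a) a product $r_1 r_2 \cdots r_s$ of matrix units lying in $J(\mathrm{UT})$ is nonzero if and only if $r_t = e_{i_t,i_{t+1}}$ for some strictly increasing chain $i_1 < i_2 < \cdots < i_{s+1}$, and then $r_1 \cdots r_s = e_{i_1,i_{s+1}}$; and (b) from $e_{ij}e_{jk} = e_{ik}$ one gets $\deg_{G} e_{i_1,i_t} \cdot \deg_{G} e_{i_t,i_{t+1}} = \deg_{G} e_{i_1,i_{t+1}}$ whenever $i_1 \le i_t \le i_{t+1}$, so degrees telescope along a chain.

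First I would prove the forward direction. Suppose $\eta = (\eta_1,\ldots,\eta_s)$ is $\varepsilon$-good, witnessed by matrix units $r_t = e_{i_t,i_{t+1}} \in J(\mathrm{UT})$ with $i_1 < \cdots < i_{s+1}$ and $\deg_{G} r_t = \eta_t$. Put $i = i_1$. Telescoping the degrees along the chain via (b) gives $\eta_1 \eta_2 \cdots \eta_t = \deg_{G} e_{i_1,i_{t+1}} = \deg_{G} e_{i,\, i + (i_{t+1}-i)}$, which is precisely the entry of $\rho_i$ in position $i_{t+1}-i$. Since $i_2 < i_3 < \cdots < i_{s+1}$, these positions are strictly increasing, so $(\eta_1, \eta_1\eta_2, \ldots, \eta_1\cdots\eta_s)$ appears as a (not necessarily consecutive) subsequence of $\rho_i$, as desired.

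Then I would run the argument in reverse. If $(\eta_1, \eta_1\eta_2, \ldots, \eta_1\cdots\eta_s)$ occurs as a subsequence of $\rho_i$ in positions $j_1 < j_2 < \cdots < j_s$, meaning $\deg_{G} e_{i,\,i+j_t} = \eta_1\cdots\eta_t$ for all $t$, then setting $i_1 = i$ and $i_{t+1} = i + j_t$ yields a strictly increasing chain; let $r_t = e_{i_t,i_{t+1}} \in J(\mathrm{UT})$. By (a), $r_1\cdots r_s = e_{i_1,i_{s+1}} \ne 0$, and by (b) together with $\deg_{G} e_{i_1,i_t} = \eta_1\cdots\eta_{t-1}$ we obtain $\deg_{G} r_t = (\eta_1\cdots\eta_{t-1})^{-1}(\eta_1\cdots\eta_t) = \eta_t$. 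Hence $\eta$ is $\varepsilon$-good, closing the equivalence.

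I do not anticipate a genuine obstacle here: the proof is essentially a dictionary between the two descriptions. The only points requiring a bit of care in the write-up are the degenerate index $t=1$, where ``$\eta_1\cdots\eta_{t-1}$'' is the empty product and $e_{i_1,i_1}$ must be read as the degree-$1$ idempotent, and the explicit remark that the subsequence of $\rho_i$ is allowed to skip entries precisely because the intermediate indices $i_2,\ldots,i_s$ may not be consecutive.
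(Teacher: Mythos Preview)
Your proposal is correct and is exactly the argument the paper has in mind: the paper states only that the lemma ``emerges as a direct consequence of the definition of $\varepsilon$-good sequence and of the fact that $e_{ij}e_{jk} = e_{ik}$'' and omits the details, and what you have written is precisely the unpacking of that sentence via the telescoping of degrees along a strictly increasing chain of indices.
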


    \begin{remark}
        As a consequence of the previous lemma, we have that $\mathrm{Id}_{G}(\mathrm{UT},\varepsilon)=0$ if and only if for each finite sequence $\eta = (g_{1}, \ldots, g_{s})$ of elements of $G$, there exists $i \in \mathbb{N}$ such that $\eta$ is a subsequence (not necessarily consecutive) of $\rho_{i}$.
    \end{remark}

Given a $G$-grading on $\mathrm{UT}$ where each matrix unit is homogeneous, the subspace $R_i:=e_{ii}J(\mathrm{UT})$ is always graded. Now, we consider the following properties.

\begin{definition}\label{incomplete}
Let $\varepsilon:\mathbb{N}\to G$ define a $G$-grading on $\mathrm{UT}$. We say that:
\begin{enumerate}
\renewcommand{\labelenumi}{(\roman{enumi})}
\item $\varepsilon$ is \emph{eventually incomplete} if there exists $N\in\mathbb{N}$ such that $\mathrm{Supp}\,R_i\ne G$, for all $i \geq N$, where $R_i=e_{ii}J(\mathrm{UT})$,
\item the $m$-th row of $\varepsilon$ is \emph{$g$-finite}, where $g\in G$, if $\{n\in\mathbb{N}\mid n > m, \deg_{G} e_{mn}=g\}$ is finite,
\item $\varepsilon$ is said to be \emph{row-incomplete} if it has a $g$-finite row, for some $g\in G$,
\item $\varepsilon$ is said to be \emph{row-finite} if for each $m \in \mathbb{N}$, there exists $g \in G$ such that the $m$-th row is $g$-finite,
\item the $m$-th row of $\varepsilon$ is \emph{totally complete} if for any $N\in\mathbb{N}$ and $g\in G$, there exists $\ell>N$ such that $\deg_{G} e_{m\ell}=g$.
\end{enumerate}
\end{definition}

All the previous definitions turn to be equivalent in the following sense:
\begin{lemma}\label{equivalences_incompelte}
Let $\varepsilon:\mathbb{N}\to g$ define a $G$-grading on $\mathrm{UT}$. The following conditions are equivalent:
\begin{enumerate}
\renewcommand{\labelenumi}{(\roman{enumi})}
\item $\varepsilon$ is eventually incomplete,
\item there exists $N\in\mathbb{N}$ such that no $m$-th row of $\varepsilon$ is totally complete, for $m \geq N$,
\item $\varepsilon$ is row-incomplete,
\item $\varepsilon$ is row-finite.
\end{enumerate}
\end{lemma}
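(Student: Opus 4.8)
The plan is to establish the four conditions equivalent through the cycle $(i)\Rightarrow(ii)\Rightarrow(iii)\Rightarrow(iv)\Rightarrow(i)$. Everything rests on two elementary observations. First, $R_i=e_{ii}J(\mathrm{UT})=\mathrm{Span}\{e_{in}\mid n>i\}$, so $\mathrm{Supp}\,R_i=\{\deg_{G} e_{in}\mid n>i\}$ is exactly the set of group elements occurring in the $i$-th row of $\varepsilon$. Second, $e_{in}=e_{ij}e_{jn}$ gives $\deg_{G} e_{in}=\deg_{G} e_{ij}\cdot\deg_{G} e_{jn}$ for $i<j<n$; with $j=i+1$ this means the $i$-th row, from position $i+2$ onward, is the left translate by $\varepsilon(i)$ of the $(i+1)$-th row. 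I would first record the immediate consequence: for each $i$ and each $g\in G$, the $i$-th row is $g$-finite if and only if the $(i+1)$-th row is $\varepsilon(i)^{-1}g$-finite (dropping the single position $n=i+1$ does not affect finiteness). Since $g\mapsto\varepsilon(i)^{-1}g$ is a bijection of $G$, the statement ``some $g$ makes this row $g$-finite'' holds for every row or for none; together with an upward-and-downward induction this is precisely $(iii)\Rightarrow(iv)$.

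For the two remaining easy implications I unravel ``totally complete'': negating the definition, the $m$-th row fails to be totally complete precisely when it is $g$-finite for some $g\in G$. Hence $(ii)\Rightarrow(iii)$ is immediate, since the $N$ witnessing $(ii)$ provides a $g$-finite $N$-th row. And $(i)\Rightarrow(ii)$ holds because, if $\mathrm{Supp}\,R_i\ne G$, then any $g\notin\mathrm{Supp}\,R_i$ is absent from the $i$-th row, so that row is trivially $g$-finite; thus the $N$ witnessing $(i)$ also witnesses $(ii)$.

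The substantive part is $(iv)\Rightarrow(i)$, which I expect to be the main obstacle. I prove it contrapositively. Assume $(i)$ fails, so for every $N$ there is $i\ge N$ with $\mathrm{Supp}\,R_i=G$. First I upgrade this to $\mathrm{Supp}\,R_j=G$ for \emph{all} $j$: given $j$, choose $i>j$ with $\mathrm{Supp}\,R_i=G$; then for any $g\in G$ there is $n>i$ with $\deg_{G} e_{in}=(\deg_{G} e_{ji})^{-1}g$, whence $\deg_{G} e_{jn}=\deg_{G} e_{ji}\,\deg_{G} e_{in}=g$, so $g\in\mathrm{Supp}\,R_j$. Then I show every row meets every element of $G$ infinitely often: fix $m$, $g$, and an arbitrary bound $N>m$; using $\mathrm{Supp}\,R_N=G$, pick $n>N$ with $\deg_{G} e_{Nn}=(\deg_{G} e_{mN})^{-1}g$, so that $\deg_{G} e_{mn}=g$ with $n>N$. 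Hence no row is $g$-finite for any $g$, i.e.\ $(iv)$ fails, and the cycle closes. The only care needed is in tracking the nested quantifiers---especially in the negation of ``totally complete''---and in observing that inserting or deleting the single position $n=i+1$ never affects finiteness.
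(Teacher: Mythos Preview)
Your proof is correct and follows essentially the same route as the paper: both argue the chain $(i)\Rightarrow(ii)\Rightarrow(iii)\Rightarrow(iv)\Rightarrow(i)$, with the first two implications immediate and the remaining two resting on the translation relation $\deg_G e_{in}=\deg_G e_{ij}\cdot\deg_G e_{jn}$ between rows. The only minor difference is that you prove $(iv)\Rightarrow(i)$ by contrapositive (showing that failure of eventual incompleteness forces every row to hit every $g$ infinitely often), whereas the paper argues directly from $g$-finiteness of the first row to produce an element missing from $\mathrm{Supp}\,R_{N+1}$; your version is arguably cleaner, since the paper's written argument only exhibits a single incomplete row rather than all rows beyond some index, leaving the extension to all $i>N$ implicit.
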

\begin{proof} 
    The chain of implications $(i)\Rightarrow(ii)\Rightarrow(iii)$ is clear. Assume that $\varepsilon$ has a $g$-finite row $m$, for some $g \in G$, and denote $\varepsilon=(g_{1},g_{2},\ldots)$. Consider the sequence $\varepsilon' = (g_{m},g_{m}g_{m+1},\ldots)$ of the $G$-degrees of the matrix units at the $m$-th row. Since $g$ appears a finite number of times in $\varepsilon'$, for each $m' > m$, the element $g_{m'}^{-1}\cdots g_{m}^{-1}g$ appears a finite number of times in the $m'$-th row. Similarly, for each $m' < m$, the element $g_{m'}\cdots g_{m}g$ appears in the $m'$-th row a finite number of times.

    Finally, we need to prove that $(iv)\Rightarrow(i)$. For it, assume that $\varepsilon$ is row-finite, and let $g\in G$ be such that $\{n\in\mathbb{N}\mid n > 1, \deg_{G} e_{1n}=g\}$ is finite. If this set is empty we are done. Suppose that it is not empty and let $N$ be the greatest element in this set. It means that $g$ appears only a finite number of times in the sequence $\varepsilon' = (g_{1},g_{1}g_{2},\ldots)$, and the last occurence is at the postion $N$. So, the element $g' = g_{N}^{-1}\cdots g_{1}^{-1}g$ does not appear in the sequence $(g_{N+1}, g_{N+1}g_{N+2},\ldots)$. Therefore, $g'$ does not appears in $\mathrm{Supp}(R_{N+1})$, and $\mathrm{Supp}(R_{N+1}) \neq G$.
\end{proof}

     Recall that given a map $\bar{g}:\mathbb{N}\to G$ we have a $G$-grading on $\mathrm{UT}$ by imposing $\deg_{G} e_{ij}=\bar{g}(i)\bar{g}(j)^{-1}$. The previous definition can be stated in terms of such map $\bar{g}$ in the following way.

     \begin{lemma}
         Let $\bar{g}:\mathbb{N}\to G$ be a map and define a $G$-grading on $\mathrm{UT}$ by setting $\deg_{G}(e_{ij}) = \bar{g}(i)\bar{g}(j)^{-1}$. The grading is eventually incomplete if and only if there exist $N \in \mathbb{N}$ and $g \in G$ such that $\bar{g}(i) \neq g$, for all $i > N$.
     \end{lemma}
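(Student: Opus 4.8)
The plan is to unwind \Cref{incomplete}(i) into a statement about the fibers of $\bar{g}$. First I would record the explicit description $R_i = e_{ii}J(\mathrm{UT}) = \mathrm{Span}\{e_{ij}\mid j>i\}$, so that $\mathrm{Supp}\,R_i = \{\deg_G e_{ij}\mid j>i\} = \{\bar{g}(i)\bar{g}(j)^{-1}\mid j>i\}$. Since left translation by $\bar{g}(i)$ is a bijection of $G$, the substitution $h=\bar{g}(i)\bar{g}(j)^{-1}$ shows that $\mathrm{Supp}\,R_i = G$ if and only if $\{\bar{g}(j)\mid j>i\}=G$; equivalently, $\mathrm{Supp}\,R_i\ne G$ if and only if there is some $g\in G$ (a priori depending on $i$) with $\bar{g}(j)\ne g$ for all $j>i$.

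The second observation is that the condition ``$\mathrm{Supp}\,R_i\ne G$'' propagates to larger indices: if $g$ witnesses $\mathrm{Supp}\,R_N\ne G$, then for every $i\ge N$ we have $\{\bar{g}(j)\mid j>i\}\subseteq\{\bar{g}(j)\mid j>N\}$, and the latter set omits $g$, so $g$ also witnesses $\mathrm{Supp}\,R_i\ne G$. Hence $\varepsilon$ is eventually incomplete — i.e.\ $\mathrm{Supp}\,R_i\ne G$ for all $i$ beyond some threshold — if and only if $\mathrm{Supp}\,R_N\ne G$ for a single index $N$.

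Combining the two remarks, the equivalence falls out. For the forward direction, eventual incompleteness yields $N$ with $\mathrm{Supp}\,R_N\ne G$, and the first paragraph produces $g\in G$ with $\bar{g}(i)\ne g$ for all $i>N$, which is exactly the asserted condition. For the converse, starting from $N$ and $g$ with $\bar{g}(i)\ne g$ for all $i>N$, the first paragraph gives $\mathrm{Supp}\,R_N\ne G$, and the propagation remark upgrades this to $\mathrm{Supp}\,R_i\ne G$ for all $i\ge N$, so $\varepsilon$ is eventually incomplete.

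I do not expect a genuine obstacle here: this is a routine translation of the definition. The only steps deserving attention are the bijection argument identifying $\mathrm{Supp}\,R_i=G$ with $\{\bar{g}(j)\mid j>i\}=G$, and the monotonicity remark, which is what makes the universal quantifier over $i\ge N$ in \Cref{incomplete}(i) collapse to a single value of $N$.
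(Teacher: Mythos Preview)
Your proof is correct and follows essentially the same approach as the paper: both arguments hinge on the observation that $\mathrm{Supp}\,R_i\ne G$ is equivalent to $\bar g$ missing some value on $\{j:j>i\}$, and then produce a witness for each $i\ge N$. Your explicit ``propagation'' remark (that a single missing value at level $N$ persists for all $i\ge N$) is a clean way to collapse the universal quantifier in \Cref{incomplete}(i); the paper achieves the same effect by directly exhibiting $\bar g(i)g^{-1}\notin\mathrm{Supp}\,R_i$ for each $i\ge N$.
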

     \begin{proof}
         Suppose that this $G$-grading is eventually incomplete and let $N \in \mathbb{N}$ such that $\mathrm{Supp}(R_{i}) \neq G$, for all $i \geq N$. Then, there is some $g' \in G$ such that $g' \notin \mathrm{Supp}(R_{N})$. Therefore, setting $g = (g')^{-1}\bar{g}(N)$ we have that $\bar{g}(i) \neq g$, for all $i > N$.
         Now, suppose that there exist $N \in \mathbb{N}$ and $g \in G$ such that $\bar{g}(i) \neq g$, for all $i > N$. Then, for all $i \geq N$, $\bar{g}(i)g^{-1} \notin \mathrm{Supp}(R_{i})$.
     \end{proof}

     We may translate the map $\bar{g}$ in such a way that the above element $g$ equals $1$. In other words, assume that $\bar{g}:\mathbb{N}\to G$ is a map such that there exist $N \in \mathbb{N}$ and $g \in G$ satisfying $\bar{g}(i) \neq g$, for all $i > N$. Define $\bar{f}:\mathbb{N}\to G$ by $\bar{f}(i) = \bar{g}(i)g^{-1}$. Note that the $G$-gradings obtained by setting, respectively, $\deg_{G}(e_{ij})=\bar{g}(i)\bar{g}(j)^{-1}$ and $\deg_{G}(e_{ij})=\bar{f}(i)\bar{f}(j)^{-1}$ are the same. Furthermore, $\bar{f}(i) \neq 1$ for all $i > N$. Thus, an elementary $G$-grading is eventually incomplete if and only if there exist a map $\bar{f}:\mathbb{N}\to G$ and a natural $N \in \mathbb{N}$ such that $\deg_{G}(e_{ij}) = \bar{f}(i)\bar{f}(j)^{-1}$ and $\bar{f}(i) \neq 1$ for all $i > N$. 
     
\begin{Prop}
    Let $\bar{g}:\mathbb{N}\to G$ be a map such that there exists $N \in \mathbb{N}$ satisfying $\bar{g}(i) \neq 1$, for all $i > N$. Define a $G$-grading on $\mathrm{UT}$ by setting $\deg_{G}(e_{ij}) = \bar{g}(i)\bar{g}(j)^{-1}$. Then, $\bar{g}(i) \notin R_{i}$ for all $i \geq N$. In particular, if $\bar{g}(N) = 1$ then $1 \notin R_{N}$.
\end{Prop}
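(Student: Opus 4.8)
The plan is to compute $\mathrm{Supp}\,R_i$ explicitly and then read off the conclusion; this is essentially a one-line bookkeeping argument once the right description of $R_i$ is in place. (I read the statement "$\bar g(i)\notin R_i$" as "$\bar g(i)\notin\mathrm{Supp}\,R_i$", to match the notation used in \Cref{incomplete} and the preceding lemmas.)

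First I would record that $J(\mathrm{UT})=\mathrm{Span}\{e_{jk}\mid j<k\}$ and, using $e_{ii}e_{jk}=\delta_{ij}e_{ik}$, that
$R_i=e_{ii}J(\mathrm{UT})=\mathrm{Span}\{e_{ik}\mid k>i\}$.
Since this is a span of homogeneous elements, $R_i$ is graded and, as $\deg_G e_{ik}=\bar g(i)\bar g(k)^{-1}$, one gets $\mathrm{Supp}\,R_i=\{\bar g(i)\bar g(k)^{-1}\mid k>i\}$.

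Second I would observe that $\bar g(i)\in\mathrm{Supp}\,R_i$ if and only if there is some $k>i$ with $\bar g(i)=\bar g(i)\bar g(k)^{-1}$, and, cancelling $\bar g(i)$ on the left in the group $G$, this happens if and only if $\bar g(k)=1$ for some $k>i$. Now if $i\ge N$, then every $k>i$ also satisfies $k>N$, so by hypothesis $\bar g(k)\ne 1$; hence no such $k$ exists and $\bar g(i)\notin\mathrm{Supp}\,R_i$. For the ``in particular'' clause one simply takes $i=N$: applying what was just proved with $i=N\ge N$ gives $\bar g(N)\notin\mathrm{Supp}\,R_N$, and combining this with the assumption $\bar g(N)=1$ yields $1\notin\mathrm{Supp}\,R_N$.

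I do not anticipate any genuine obstacle here; the only point needing a moment of care is the identification $R_i=e_{ii}J(\mathrm{UT})=\mathrm{Span}\{e_{ik}\mid k>i\}$ together with the degree computation, both of which follow at once from $e_{ij}e_{jk}=e_{ik}$ and the definition of the elementary grading.
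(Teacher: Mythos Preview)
Your proof is correct and follows essentially the same line as the paper's: both arguments reduce to the observation that for $j>i\ge N$ one has $\deg_G e_{ij}=\bar g(i)\bar g(j)^{-1}\ne\bar g(i)$ because $\bar g(j)\ne1$. Your version is just a bit more explicit about the description of $R_i$ and its support, and about the reading of ``$\bar g(i)\notin R_i$'' as a statement about $\mathrm{Supp}\,R_i$.
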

\begin{proof}
   Since $\bar{g}(j) \neq 1$ for all $j > N$, then $\deg_{G}(e_{ij}) = \bar{g}(i)\bar{g}(j)^{-1} \neq \bar{g}(i)$ for all $j > i \geq N$. Therefore, $\bar{g}(i) \notin R_{i}$ for all $i \geq N$.
\end{proof}
     
Thus, we obtain a criterion to the non-existence of graded polynomial identities.
\begin{Prop}
    \label{notEventuallyIncomplete}
    If $\varepsilon$ is not eventually incomplete, then $\mathrm{Id}_G(\mathrm{UT},\varepsilon) = 0$.
\end{Prop}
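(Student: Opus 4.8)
The plan is to reduce the assertion, via the equivalences already established, to a purely combinatorial statement about the rows $\rho_i$, and then prove that statement by a greedy selection argument.

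First I would recall the two translations already available. By the discussion right after \Cref{mainTheorem}, $\mathrm{Id}_G(\mathrm{UT},\varepsilon)=0$ is equivalent to every finite sequence of elements of $G$ being $\varepsilon$-good; and by the remark following \Cref{goodSequenceLineSequence} this is in turn equivalent to the following: for every finite sequence $\eta=(g_1,\ldots,g_s)$ of elements of $G$ there exists $i\in\mathbb{N}$ such that $\eta$ is a (not necessarily consecutive) subsequence of $\rho_i$. So it suffices to produce, for each such $\eta$, one row index that realizes it.

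Next I would invoke \Cref{equivalences_incompelte}: the four conditions listed there are equivalent, so the hypothesis that $\varepsilon$ is \emph{not} eventually incomplete is in particular the negation of condition (ii). Hence for every $N\in\mathbb{N}$ there is some $m\ge N$ whose $m$-th row is totally complete; in particular, at least one totally complete row $m$ exists, and I fix it. By the definition of totally complete, for every $N\in\mathbb{N}$ and every $g\in G$ there is $\ell>N$ with $\deg_G e_{m\ell}=g$; equivalently, each element of $G$ occurs among the entries of $\rho_m$ for arbitrarily large indices.

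Finally I would build the required subsequence greedily. Given $\eta=(g_1,\ldots,g_s)$, choose $\ell_1>m$ with $\deg_G e_{m\ell_1}=g_1$; having chosen $\ell_1<\cdots<\ell_{k-1}$ with $\deg_G e_{m\ell_j}=g_j$, apply total completeness of the $m$-th row with $N=\ell_{k-1}$ and $g=g_k$ to obtain $\ell_k>\ell_{k-1}$ with $\deg_G e_{m\ell_k}=g_k$. Writing $\ell_j=m+t_j$ we get $t_1<\cdots<t_s$ with the $t_j$-th entry of $\rho_m$ equal to $g_j$, so $\eta$ is a subsequence of $\rho_m$, which by the remark after \Cref{goodSequenceLineSequence} finishes the proof. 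I do not expect any genuine obstacle here, since all the content has been absorbed into \Cref{equivalences_incompelte} and \Cref{goodSequenceLineSequence}; the only point requiring care is to keep the indexing conventions straight, namely that the $j$-th entry of $\rho_m$ is $\deg_G e_{m,m+j}$, so that a totally complete $m$-th row is exactly the condition that every $g\in G$ appears at arbitrarily large positions of $\rho_m$.
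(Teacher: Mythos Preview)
Your proposal is correct and follows essentially the same route as the paper: both arguments use \Cref{equivalences_incompelte} to extract a single row in which every $g\in G$ occurs infinitely often (you via the negation of condition (ii), the paper via the negation of condition (iv), which amount to the same thing), and then conclude by the greedy selection you describe. The paper's version is terser and leaves the greedy step implicit, but the content is identical.
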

\begin{proof}
    From \Cref{equivalences_incompelte}.(iv), $\varepsilon$ is not row-finite. Thus, for every element $g \in G$ and $n \in \mathbb{N}$, there exists $m \in \mathbb{N}$, $m > n$, such that $\deg_{G}(e_{nm}) = g$. Hence, every finite sequence of elements from $G$ is  $\varepsilon$-good.
\end{proof}

The converse of this proposition holds for a finite group. To prove it we will need some technical results.

\begin{definition}
    We say that an elementary grading defined by $\varepsilon = (g_{1},g_{2},\ldots)$ \emph{stabilizes at the $N$-th row} if $\mathrm{Supp}(R_{i+1}) = g_{i}^{-1}\mathrm{Supp}(R_{i})$, for all $i \geq N$. In this case, we say that $\varepsilon$ is \emph{$N$-stable} and, if $N = 1$, we say that $\varepsilon$ is \emph{stable}.
\end{definition}

For instance, note that if the first row of a grading $\varepsilon$ is totally complete, then every row of $\varepsilon$ is totally complete as well. Thus, $\varepsilon$ is stable.

\begin{lemma}
    \label{not1}
    If $s \in \mathbb{N}$, $s > 1$, then $1 \in \mathrm{Supp}(R_{s})$ if and only if $\mathrm{Supp}(R_{s}) = g_{s-1}^{-1}\mathrm{Supp}(R_{s-1})$.
\end{lemma}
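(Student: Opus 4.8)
The plan is direct: I would compute $\mathrm{Supp}(R_s)$ and $g_{s-1}^{-1}\mathrm{Supp}(R_{s-1})$ in closed form in terms of the defining sequence $\varepsilon=(g_1,g_2,\ldots)$ and simply compare them. The first step is to record that, since $J(\mathrm{UT})$ is spanned by the matrix units $e_{k\ell}$ with $k<\ell$, one has $R_i=e_{ii}J(\mathrm{UT})=\mathrm{Span}\{e_{i\ell}\mid \ell>i\}$, and that $\deg_G e_{i,i+j}=g_ig_{i+1}\cdots g_{i+j-1}$ for every $j\geq 1$, because $e_{i,i+1}e_{i+1,i+2}\cdots e_{i+j-1,i+j}=e_{i,i+j}$. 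Consequently $\mathrm{Supp}(R_i)=\{g_ig_{i+1}\cdots g_{i+j-1}\mid j\geq 1\}$, which is exactly the set of entries of the sequence $\rho_i$ used just before \Cref{goodSequenceLineSequence}.

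The key step is the identity
$$g_{s-1}^{-1}\,\mathrm{Supp}(R_{s-1}) = \{1\}\cup\mathrm{Supp}(R_s),$$
valid for every $s>1$ (the hypothesis $s>1$ serving only to make $R_{s-1}$ meaningful). To see it, apply left multiplication by $g_{s-1}^{-1}$ to the generic element $g_{s-1}g_s\cdots g_{s+j-2}$ of $\mathrm{Supp}(R_{s-1})$, $j\geq 1$: for $j=1$ this gives $1$, and for $j\geq 2$ it gives $g_sg_{s+1}\cdots g_{s+j-2}$, i.e., the generic element of $\mathrm{Supp}(R_s)$ after the index shift $j\mapsto j-1$.

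Granting the identity, the equivalence is immediate: one always has $\mathrm{Supp}(R_s)\subseteq\{1\}\cup\mathrm{Supp}(R_s)=g_{s-1}^{-1}\mathrm{Supp}(R_{s-1})$, so the two supports coincide precisely when $\{1\}\cup\mathrm{Supp}(R_s)=\mathrm{Supp}(R_s)$, that is, precisely when $1\in\mathrm{Supp}(R_s)$, which is the assertion. I do not expect any genuine obstacle; the only point to get right is the index bookkeeping in the identity above, together with the remark that the extra element $1$ on the right comes exactly from the degree of $e_{s-1,s}$ (namely $g_{s-1}$), whose left-translate $g_{s-1}^{-1}g_{s-1}$ equals $1$, and which need not arise among the degrees of the $e_{s\ell}$ unless some product $g_s\cdots g_{\ell-1}$ happens to be trivial.
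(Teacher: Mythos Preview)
Your proof is correct and follows essentially the same approach as the paper: both establish the key identity $g_{s-1}^{-1}\mathrm{Supp}(R_{s-1})=\mathrm{Supp}(R_s)\cup\{1\}$ (the paper writes it equivalently as $\mathrm{Supp}(R_{s-1})=(g_{s-1}\mathrm{Supp}(R_s))\cup\{g_{s-1}\}$) and then read off the equivalence immediately. Your version is simply more explicit about the index bookkeeping behind that identity.
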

    \begin{proof}
        Note that $\mathrm{Supp}(R_{s-1}) = (g_{s-1}\mathrm{Supp}(R_{s})) \cup \{g_{s-1}\}$. Thus, $g_{s-1}^{-1}\mathrm{Supp}(R_{s-1}) = \mathrm{Supp}(R_{s}) \cup \{1\}$. Hence, $1 \in \mathrm{Supp}(R_{s})$ if and only if $\mathrm{Supp}(R_{s}) = g_{s-1}^{-1}\mathrm{Supp}(R_{s-1})$.
    \end{proof}

As a consequence, we obtain a criteria to determine if a grading is not stable.
\begin{corollary}\label{finite_indices}
    If $G$ is a group and $\varepsilon$ is $N$-stable, for some $N \in \mathbb{N}$, then there exist a finite number of indices $i_{1},\ldots,i_{s} \in \mathbb{N}$ such that $1 \notin \mathrm{Supp}\,R_{i}$, for $i = i_{1}, \ldots, i_{s}$. \qed
\end{corollary}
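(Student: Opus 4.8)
The plan is to deduce the statement immediately from \Cref{not1} together with the definition of $N$-stability; no induction or new construction is needed. First I would unwind the hypothesis: $\varepsilon$ being $N$-stable means precisely that $\mathrm{Supp}(R_{i+1}) = g_{i}^{-1}\mathrm{Supp}(R_{i})$ for every $i \geq N$.

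Next, fix an arbitrary $i \geq N$ and set $s = i+1$. Since $N \geq 1$ we have $s > 1$, so \Cref{not1} applies and gives the equivalence: $1 \in \mathrm{Supp}(R_{s})$ if and only if $\mathrm{Supp}(R_{s}) = g_{s-1}^{-1}\mathrm{Supp}(R_{s-1})$. But the right-hand side of this equivalence is exactly the $N$-stability relation $\mathrm{Supp}(R_{i+1}) = g_{i}^{-1}\mathrm{Supp}(R_{i})$, which holds by hypothesis. Hence $1 \in \mathrm{Supp}(R_{i+1})$ for every $i \geq N$; equivalently, $1 \in \mathrm{Supp}(R_{s})$ for every $s > N$.

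Therefore the set $\{\, i \in \mathbb{N} \mid 1 \notin \mathrm{Supp}(R_{i}) \,\}$ is contained in $\{1, 2, \ldots, N\}$, which is finite, and one may take $i_{1}, \ldots, i_{s}$ to be its elements. I do not expect any genuine obstacle here; the only point needing a moment's care is matching the index ranges — the hypothesis $s > 1$ in \Cref{not1} against $i+1 \geq N+1 \geq 2$ — which is immediate.
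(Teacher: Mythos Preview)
Your argument is correct and is exactly the intended one: the paper marks the corollary with \qed precisely because it follows at once from \Cref{not1} together with the definition of $N$-stability, just as you wrote. The index bookkeeping you flagged is the only thing to check, and you handled it correctly.
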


We determine an equivalent condition for stability.
\begin{lemma}
    \label{emptyInfinite}
    Let $N \in \mathbb{N}$ and $\varepsilon = (g_{1},g_{2},\ldots)$ define an elementary grading on $\mathrm{UT}$. Then, $\varepsilon$ is $N$-stable if, and only if, for all $g \in G$ and $m \geq N$, $\{n\in\mathbb{N}\mid n > m, \deg_{G}(e_{mn})=g\}$ is either empty or infinite.
\end{lemma}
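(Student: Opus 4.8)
The plan is to reduce the equivalence to two cleaner statements and then bridge them with a short bootstrapping argument. The first reduction concerns the right-hand side: since $R_m=e_{mm}J(\mathrm{UT})=\mathrm{Span}\{e_{mn}\mid n>m\}$, we have $g\in\mathrm{Supp}(R_m)$ precisely when $\{n\in\mathbb{N}\mid n>m,\ \deg_{G}(e_{mn})=g\}$ is nonempty; so the condition in the lemma says exactly that, for each $m\ge N$, every element of $\mathrm{Supp}(R_m)$ occurs \emph{infinitely often} in the row sequence $\rho_m$. The second reduction concerns the left-hand side: applying \Cref{not1} with $s=i+1$ (legitimate since $i\ge N\ge 1$), the equality $\mathrm{Supp}(R_{i+1})=g_i^{-1}\mathrm{Supp}(R_i)$ holds if and only if $1\in\mathrm{Supp}(R_{i+1})$. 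Hence $\varepsilon$ is $N$-stable if and only if $1\in\mathrm{Supp}(R_j)$ for all $j>N$.

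So it remains to prove: $1\in\mathrm{Supp}(R_j)$ for all $j>N$ if and only if, for every $m\ge N$, each $g\in\mathrm{Supp}(R_m)$ occurs infinitely often in row $m$. For the forward direction, fix $m\ge N$ and $g\in\mathrm{Supp}(R_m)$, and choose $n_1>m$ with $\deg_{G}(e_{m,n_1})=g$. Since $n_1>m\ge N$, we have $1\in\mathrm{Supp}(R_{n_1})$, so there is $n_2>n_1$ with $\deg_{G}(e_{n_1,n_2})=1$; then $e_{m,n_1}e_{n_1,n_2}=e_{m,n_2}$ forces $\deg_{G}(e_{m,n_2})=g$. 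Repeating the argument with $n_2$ in place of $n_1$ (and so on) yields a strictly increasing sequence $n_1<n_2<\cdots$ with $\deg_{G}(e_{m,n_k})=g$ for all $k$, so $g$ occurs infinitely often in row $m$.

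For the converse, let $j>N$, so that $j-1\ge N$ and $g_{j-1}=\deg_{G}(e_{j-1,j})\in\mathrm{Supp}(R_{j-1})$. By hypothesis $g_{j-1}$ occurs infinitely often in row $j-1$; in particular there is $n>j$ with $\deg_{G}(e_{j-1,n})=g_{j-1}$, and then $e_{j-1,j}e_{j,n}=e_{j-1,n}$ gives $\deg_{G}(e_{j,n})=g_{j-1}^{-1}g_{j-1}=1$, hence $1\in\mathrm{Supp}(R_j)$. As $j>N$ was arbitrary, $\varepsilon$ is $N$-stable by the first reduction.

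The only point requiring care is the iteration in the forward implication: one must make sure that every index produced stays $>N$ so that the hypothesis $1\in\mathrm{Supp}(R_{\,\cdot\,})$ can be reused at each step; this is automatic because the indices are strictly increasing and already start above $N$. Everything else is routine manipulation of matrix units via $e_{ij}e_{jk}=e_{ik}$ together with the two translations supplied by \Cref{not1} and by the explicit description of $\mathrm{Supp}(R_m)$ in terms of the row of $\varepsilon$.
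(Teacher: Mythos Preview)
Your proof is correct and takes essentially the same approach as the paper: both use \Cref{not1} to translate $N$-stability into the condition $1\in\mathrm{Supp}(R_j)$ for all $j>N$, and both exploit $e_{mn}e_{n\ell}=e_{m\ell}$ to propagate occurrences of a given homogeneous degree through occurrences of degree~$1$. The only difference is organizational---you make the reduction explicit upfront and argue each implication directly, whereas the paper argues by contrapositive and invokes \Cref{not1} inside each direction.
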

\begin{proof}
    Suppose that there exist $m' \in \mathbb{N}$, $m' \geq N$, and $g \in G$ such that $\{n\in\mathbb{N}\mid n > m', \deg_{G}(e_{m'n})=g\} \neq \emptyset$ is finite. Let $n'$ be the greatest element of this set. 
    If $1\in\mathrm{Supp}\,R_{n'}$, then we can find $\ell>n'$ such that $\deg_{G} e_{n'\ell}=1$. Thus, $\deg_{G} e_{m'\ell}=\deg_{G} e_{m'n'}\deg_{G} e_{n'\ell}=g$, a contradiction with the choice of $n'$. Therefore, \Cref{not1} implies that $\mathrm{Supp}(R_{n'}) \neq g_{n'-1}^{-1}\mathrm{Supp}(R_{n'-1})$.
    
    Conversely, suppose that $\mathrm{Supp}(R_{m'+1}) \neq g_{m'}^{-1}\mathrm{Supp}(R_{m'})$, for some $m' \geq N$. By \Cref{not1}, $1\notin\mathrm{Supp}(R_{m'+1})$. Now, assume that, for some $l>m'+1$, one has $g_m'=\deg_{G} e_{m'\ell}=\deg_{G} e_{m',m'+1} \deg_{G} e_{m'+1,\ell}$. Since $\deg_{G} e_{m',m'+1}=g_m'$, then $\deg_{G} e_{m'+1,\ell}=1$, a contradiction. Hence, $|\{n\in\mathbb{N}\mid n > m', \deg_{G} e_{m'n}=g_{m'}\}| = 1$.
\end{proof}

    \begin{remark}
        There exist gradings on $\mathrm{UT}$ that do not stabilize. Consider the $\mathbb{Z}$-grading on $\mathrm{UT}$ defined by $\varepsilon=(0,-1,2,-3,\ldots)$. The sequence of the $\mathbb{Z}$-degrees of the (strictly upper) matrix units at the first row is $\rho_{1} = (0,-1,1,-2,2,\ldots)$. In other words, each element of $\mathbb{Z}$ appears once in $\rho_{1}$. It implies that, for all $m \in \mathbb{N}$, there exists an element $g \in \mathbb{Z}$ such that $\{n\in\mathbb{N}\mid n > m, \deg_{G} e_{mn}=g\}$ is a singleton.
    \end{remark}
If $G$ is a finite group, then we can guarantee that every grading on $\mathrm{UT}$ stabilizes, as we shall prove as follows. This will be important to establish the next main results.
\begin{lemma}
    \label{finiteGroupStabilizes}
        If $G$ is a finite group, then every elementary $G$-grading on $\mathrm{UT}$ stabilizes.
\end{lemma}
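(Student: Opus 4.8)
The plan is to track the sizes of the supports $\mathrm{Supp}\,R_i$ and show that they eventually stop changing. Since $G$ is finite, each $\mathrm{Supp}\,R_i$ is a finite subset of $G$, and it is nonempty because $g_i=\deg_G e_{i,i+1}\in\mathrm{Supp}\,R_i$; hence $\bigl(|\mathrm{Supp}\,R_i|\bigr)_{i\in\mathbb{N}}$ is a sequence of positive integers bounded above by $|G|$.

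Next I would observe that this sequence is non-increasing. Exactly as in the proof of \Cref{not1}, the relation $e_{i-1,i}e_{i,n}=e_{i-1,n}$ gives $\mathrm{Supp}(R_{i-1})=\{g_{i-1}\}\cup g_{i-1}\mathrm{Supp}(R_i)$, hence $g_{i-1}^{-1}\mathrm{Supp}(R_{i-1})=\{1\}\cup\mathrm{Supp}(R_i)$. Since left translation by $g_{i-1}^{-1}$ is a bijection of $G$, it preserves cardinalities of subsets, so $|\mathrm{Supp}(R_{i-1})|=|\{1\}\cup\mathrm{Supp}(R_i)|$, which equals $|\mathrm{Supp}(R_i)|$ when $1\in\mathrm{Supp}(R_i)$ and $|\mathrm{Supp}(R_i)|+1$ otherwise. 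In either case $|\mathrm{Supp}(R_{i-1})|\ge|\mathrm{Supp}(R_i)|$.

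A non-increasing sequence of positive integers is eventually constant, so there is $N\in\mathbb{N}$ with $|\mathrm{Supp}(R_i)|=|\mathrm{Supp}(R_{i+1})|$ for all $i\ge N$. By the cardinality computation above, equality of sizes at step $i$ forces $1\in\mathrm{Supp}(R_{i+1})$, and then \Cref{not1} yields $\mathrm{Supp}(R_{i+1})=g_i^{-1}\mathrm{Supp}(R_i)$ for all $i\ge N$. This is precisely the assertion that $\varepsilon$ is $N$-stable, which is what we want.

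I do not expect a genuine obstacle here: the argument is short, and the only delicate point is keeping the index shifts straight in the identity $g_{i-1}^{-1}\mathrm{Supp}(R_{i-1})=\{1\}\cup\mathrm{Supp}(R_i)$, which is already essentially contained in the proof of \Cref{not1}. Finiteness of $G$ enters only to guarantee that the supports are finite, so that the elementary fact ``a non-increasing sequence of positive integers is eventually constant'' can be applied.
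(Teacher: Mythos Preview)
Your argument is correct and is genuinely different from the paper's. The paper works directly with the characterization in \Cref{emptyInfinite}: it isolates the finite set $R\subseteq\mathrm{Supp}\,R_1$ of elements that occur only finitely often as degrees in the first row, takes $m=\max\{m_g\mid g\in R\}$ to be the last such occurrence, and then checks by hand that for every $m'\ge m+1$ each element of $\mathrm{Supp}\,R_{m'}$ occurs infinitely often, so \Cref{emptyInfinite} gives $(m{+}1)$-stability. Your approach bypasses \Cref{emptyInfinite} entirely: you observe that $g_{i-1}^{-1}\mathrm{Supp}(R_{i-1})=\{1\}\cup\mathrm{Supp}(R_i)$ makes $(|\mathrm{Supp}\,R_i|)_i$ a non-increasing sequence of positive integers, hence eventually constant, and then feed the resulting $1\in\mathrm{Supp}(R_{i+1})$ into \Cref{not1}. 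Your proof is shorter and more conceptual; the paper's proof is more explicit, producing a concrete stabilization index from the first row. One small comment on your closing remark: the crucial use of finiteness is not so much that the sequence of positive integers is eventually constant (that needs only the lower bound $1$), but rather that for \emph{finite} sets the equality $|\{1\}\cup S|=|S|$ forces $1\in S$; this implication fails for infinite $S$, which is exactly why the lemma can fail for infinite $G$.
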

\begin{proof}
        Let $\varepsilon = (g_{1}, g_{2}, \ldots)$ define an elementary $G$-grading on $\mathrm{UT}$. If each $g\in\mathrm{Supp}\,R_1$ appears as the degree of infinitely many matrix units in the first row, then the same is true for every row of $\varepsilon$. Thus, \Cref{emptyInfinite} implies that $\varepsilon$ is stable. Otherwise, let
        \[
        R=\{g\in G\mid\text{$g\in\mathrm{Supp}\,R_1$ and $\exists m_g\in\mathbb{N}$ s.t.~$\deg_Ge_{1\ell}\ne g=\deg_Ge_{1m_g}$, $\forall l\ge m_g$}\},
        \]
        and assume that $R\ne\emptyset$. Let $m=\max\{m_g\mid g\in R\}$.

        For all $g\in R$ and $m' \geq m + 1$, the element $(g_{1}\cdots g_{m'-1})^{-1}g$ does not belong to $\mathrm{Supp}\,R_{m'}$. Indeed, otherwise, assume that $\deg_{G} e_{m'\ell}=(g_{1}\cdots g_{m'-1})^{-1}g$. Then, $\deg_{G} e_{1\ell}=\deg_{G} e_{1m'}\deg_{G} e_{m'\ell}=g$. Since $\ell>m$, we obtain a contradiction with the choice of $m$. Conversely, assume that $h\in\mathrm{Supp}\,R_{m'}$ appears only a finite number of times as a degree of some strict upper matrix unit in $R_{m'}$. Then, a similar argument shows that $g_1\cdots g_{m'}h\in R$, a contradiction. Therefore, from \Cref{emptyInfinite}, $\varepsilon$ is $(m + 1)$-stable.
    \end{proof}

    The connection between stability and good sequences are given in the following lemmas.
    \begin{lemma}
        \label{goodSequenceLineStable}
        Let $\varepsilon$ be a stable $G$-grading on $\mathrm{UT}$ and $\eta = (\eta_{1}, \ldots, \eta_{m})\in G^m$. Then, $\eta$ is $\varepsilon$-good if and only if there exists $i \in \mathbb{N}$ such that 
        $$\eta_{1}, \eta_{1}\eta_{2}, \ldots, \eta_{1}\cdots\eta_{m} \in \mathrm{Supp}(R_{i}).$$
    \end{lemma}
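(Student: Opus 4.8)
The plan is to prove the two implications of the equivalence separately, treating the statement as a \emph{stable} refinement of \Cref{goodSequenceLineSequence}, which relates $\varepsilon$-good sequences to (not necessarily consecutive) subsequences of the row sequences $\rho_i$. Throughout I will use that $\mathrm{Supp}(R_i)$ is precisely the set of entries appearing in $\rho_i = (\deg_G e_{i,i+1}, \deg_G e_{i,i+2},\ldots)$, since $R_i = e_{ii}J(\mathrm{UT}) = \mathrm{Span}\{e_{in}\mid n>i\}$.

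For the forward implication, suppose $\eta = (\eta_1,\ldots,\eta_m)$ is $\varepsilon$-good. By \Cref{goodSequenceLineSequence} there is an $i\in\mathbb{N}$ such that $(\eta_1,\eta_1\eta_2,\ldots,\eta_1\cdots\eta_m)$ occurs as a (not necessarily consecutive) subsequence of $\rho_i$; in particular each partial product $\eta_1\cdots\eta_k$ is an entry of $\rho_i$, hence lies in $\mathrm{Supp}(R_i)$. Note that this direction does not use stability.

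For the converse --- which is where stability enters --- assume that $g_k := \eta_1\cdots\eta_k$ lies in $\mathrm{Supp}(R_i)$ for every $k = 1,\ldots,m$, with $i$ fixed by the hypothesis. The key observation is that stability upgrades ``$g_k$ appears at least once in $\rho_i$'' to ``$g_k$ appears infinitely often in $\rho_i$'': since $\varepsilon$ is stable it is $1$-stable, so \Cref{emptyInfinite} (applied with the row index $i\ge 1$ and $g = g_k$) says that $\{n>i\mid \deg_G e_{in} = g_k\}$ is empty or infinite, and it is nonempty because $g_k\in\mathrm{Supp}(R_i)$. Using this, I would choose indices $j_1 < j_2 < \cdots < j_m$ inductively: once $j_{k-1}$ is fixed, pick $j_k > j_{k-1}$ among the infinitely many positions at which $\rho_i$ equals $g_k$. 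Then $(g_1,g_2,\ldots,g_m) = (\eta_1,\eta_1\eta_2,\ldots,\eta_1\cdots\eta_m)$ is a subsequence of $\rho_i$, so \Cref{goodSequenceLineSequence} gives that $\eta$ is $\varepsilon$-good.

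The only substantive point --- the ``hard part'', insofar as there is one --- is the invocation of \Cref{emptyInfinite} to pass from occurrence to \emph{infinite} occurrence of each partial product within a single row $\rho_i$; this is exactly what allows the partial products to be realized at strictly increasing positions, which is what $\varepsilon$-goodness requires. Everything else is the routine dictionary between $\mathrm{Supp}(R_i)$, the sequences $\rho_i$, and $\varepsilon$-good sequences already established in \Cref{goodSequenceLineSequence}.
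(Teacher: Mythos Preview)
Your proof is correct and is precisely the argument the paper intends: its proof consists of the single line ``It follows from \Cref{emptyInfinite} and \Cref{goodSequenceLineSequence},'' and you have simply unpacked that reference, using \Cref{goodSequenceLineSequence} for the forward direction and combining it with \Cref{emptyInfinite} (stability $\Rightarrow$ each support element occurs infinitely often in $\rho_i$) for the converse.
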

    \begin{proof}
        It follows from \Cref{emptyInfinite} and \Cref{goodSequenceLineSequence}.
    \end{proof}

    Replacing stability by $N$-stability, we obtain a version of the previous lemma.
    \begin{lemma}
        \label{goodSequenceLine}
        Let $\varepsilon$ be an $N$-stable $G$-grading on $\mathrm{UT}$, for some $N \in \mathbb{N}$, and $\eta = (\eta_{1}, \ldots, \eta_{N}, \ldots, \eta_{m})$ be $\varepsilon$-good, where $m>N$. Then, there exists $i \geq N$ such that $\eta_{N}$, $\eta_{N}\eta_{N+1}$, \dots, $\eta_{N}\cdots\eta_{m} \in \mathrm{Supp}(R_{i})$.
    \end{lemma}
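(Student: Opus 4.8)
The plan is to unwind the definition of $\varepsilon$-goodness into a concrete chain of matrix units, and then read the desired conclusion off the row indexed by the $N$-th node of that chain. First I would use that $\eta$ is $\varepsilon$-good to fix matrix units $r_1,\dots,r_m\in J(\mathrm{UT})$ with $r_1\cdots r_m\ne 0$ and $\deg_G r_k=\eta_k$; writing $r_k=e_{p_k,p_{k+1}}$, the non-vanishing of the product together with the fact that every matrix unit in $J(\mathrm{UT})$ is strictly upper triangular forces $p_1<p_2<\cdots<p_{m+1}$.

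The key observation is then purely combinatorial: an increasing sequence of positive integers satisfies $p_k\ge k$, so in particular $p_N\ge N$. I would take $i:=p_N$. For each $k$ with $N\le k\le m$ we have $e_{p_N,p_{k+1}}=e_{p_N,p_{N+1}}e_{p_{N+1},p_{N+2}}\cdots e_{p_k,p_{k+1}}$, hence $\deg_G e_{p_N,p_{k+1}}=\eta_N\eta_{N+1}\cdots\eta_k$; therefore $\eta_N,\eta_N\eta_{N+1},\dots,\eta_N\cdots\eta_m\in\mathrm{Supp}(R_{p_N})=\mathrm{Supp}(R_i)$, with $i=p_N\ge N$, which is exactly what is claimed. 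Alternatively, one can package the same argument through \Cref{goodSequenceLineSequence}: it yields an index $i'$ and integers $\ell_1<\cdots<\ell_m$ with $\deg_G e_{i',\,i'+\ell_k}=\eta_1\cdots\eta_k$; passing to the row $i'+\ell_{N-1}$ (which is $\ge N$ since $\ell_{N-1}\ge N-1$) and left-dividing the degrees by $\eta_1\cdots\eta_{N-1}$ gives the statement in the same way.

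I do not expect a real obstacle here; the only point needing care is checking that the chosen row index is at least $N$, and this is precisely where strict upper-triangularity of radical matrix units (equivalently, strict monotonicity of the indices along a nonzero product of such units) enters. I also expect that $N$-stability of $\varepsilon$ is not actually used in this implication — it is the reverse direction (reconstructing a good sequence from support data concentrated on a single row) that needs, via \Cref{emptyInfinite}, the freedom to choose the successive column indices strictly increasing. Accordingly, after writing the short argument above I would double-check whether the $N$-stability hypothesis can be dropped from the statement, or is retained only to align with the way the lemma is invoked in the sequel.
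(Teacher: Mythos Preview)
Your argument is correct, and in fact more direct than the paper's. The paper proceeds structurally: it introduces the shifted sequence $\varepsilon'=(g_N,g_{N+1},\ldots)$, observes that this defines a \emph{stable} grading on a copy of $\mathrm{UT}$ sitting as a graded subalgebra of $(\mathrm{UT},\varepsilon)$, argues that the tail $\eta'=(\eta_N,\ldots,\eta_m)$ is $\varepsilon'$-good, and then invokes \Cref{goodSequenceLineStable}. The step ``$\eta'$ is $\varepsilon'$-good'' is exactly where the paper implicitly uses the same combinatorial fact you isolate, namely that $p_N\ge N$ along a strictly increasing chain of indices; the paper just does not spell it out. Your version bypasses both the shifted grading and \Cref{goodSequenceLineStable} and reads the conclusion off the row $p_N$ directly, which is shorter and makes transparent your (correct) observation that $N$-stability plays no role in this direction --- it is only the converse, handled via \Cref{emptyInfinite}, that needs it. The paper's route, on the other hand, fits the lemma into the pattern ``reduce to the stable case and quote the stable lemma,'' which is tidier if one wants a uniform narrative but slightly obscures how little is actually being used here.
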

    \begin{proof}
        Write $\varepsilon = (g_{1}, g_{2}, \ldots)$. The sequence $\varepsilon' = (g_{N}, g_{N+1}, \ldots)$ defines a stable $G$-grading on $\mathrm{UT}$. Moreover, $(\mathrm{UT}, \varepsilon')$ is a graded subalgebra of $(\mathrm{UT}, \varepsilon)$. Thus, if $\eta = (\eta_{1}, \ldots, \eta_{N}, \ldots, \eta_{m})$ is an $\varepsilon$-good sequence, then $\eta' = (\eta_{N}, \ldots, \eta_{m})$ is an $\varepsilon'$-good sequence. By \Cref{goodSequenceLineStable}, there exists $i \geq N$ such that $\eta_{N}, \eta_{N}\eta_{N+1}, \ldots \eta_{N}\cdots\eta_{m} \in \mathrm{Supp}(R_{i})$, where $R_i=e_{ii}J(\mathrm{UT})$ has the grading induced by $(\mathrm{UT}, \varepsilon)$.
    \end{proof}

    Now, we can finally state.

    \begin{theorem}\label{conditionID}
        Let $\mathbb{F}$ be an arbitrary field (finite or infinite), $G$ a finite group, and $\varepsilon:\mathbb{N}\to G$ define a $G$-grading on $\mathrm{UT}$. Then $\varepsilon$ is eventually incomplete (see \Cref{incomplete}) if and only if $\mathrm{Id}_{G}(\mathrm{UT},\varepsilon) \ne 0$.
    \end{theorem}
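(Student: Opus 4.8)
The plan is to handle the two implications separately; only the forward one will require $G$ to be finite. For the implication $\mathrm{Id}_G(\mathrm{UT},\varepsilon)\ne0\Rightarrow\varepsilon$ eventually incomplete, I would simply invoke the contrapositive of \Cref{notEventuallyIncomplete}: if $\varepsilon$ is not eventually incomplete, then $\mathrm{Id}_G(\mathrm{UT},\varepsilon)=0$. This direction uses nothing about $G$.

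For the converse, recall (from \Cref{mainTheorem}, together with the observation opening this section) that $\mathrm{Id}_G(\mathrm{UT},\varepsilon)=0$ exactly when every finite sequence of elements of $G$ is $\varepsilon$-good; hence it suffices to exhibit one $\varepsilon$-bad sequence $\eta$, for then the polynomial $f_\eta$ of \Cref{badpolynomial} lies in $\mathrm{Id}_G(\mathrm{UT},\varepsilon)$ and is nonzero in $\mathbb{F}\langle X^G\rangle$ (being a product of nonzero polynomials in a domain). To construct such an $\eta$, I would first use finiteness of $G$ to apply \Cref{finiteGroupStabilizes}: $\varepsilon$ is $N$-stable for some $N\in\mathbb{N}$. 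Enlarging $N$ if necessary (which is possible precisely because $\varepsilon$ is eventually incomplete), we may assume in addition that $\mathrm{Supp}\,R_i\ne G$ for all $i\ge N$; note this also forces $|G|\ge2$, since otherwise the grading is trivial and $\mathrm{Supp}\,R_i=G$ for all $i$. Enumerate $G=\{t_1,\dots,t_k\}$ with $k=|G|$, and set
$$
\eta=(\underbrace{1,\dots,1}_{N-1},\,t_1,\,t_1^{-1}t_2,\,t_2^{-1}t_3,\,\dots,\,t_{k-1}^{-1}t_k)\in G^{N-1+k}.
$$
Then $\eta$ has length $m=N-1+k>N$, and its partial products starting at position $N$ satisfy $\eta_N\eta_{N+1}\cdots\eta_{N+j-1}=t_j$ for $j=1,\dots,k$, so that $\{\eta_N,\eta_N\eta_{N+1},\dots,\eta_N\cdots\eta_m\}=G$.

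It then remains to check that $\eta$ is $\varepsilon$-bad. If it were $\varepsilon$-good, then since $m>N$ and $\varepsilon$ is $N$-stable, \Cref{goodSequenceLine} would provide an index $i\ge N$ with $\eta_N,\eta_N\eta_{N+1},\dots,\eta_N\cdots\eta_m\in\mathrm{Supp}\,R_i$, that is, $G\subseteq\mathrm{Supp}\,R_i$ and hence $\mathrm{Supp}\,R_i=G$, contradicting the choice of $N$. Thus $\eta\in\mathcal{B}(\varepsilon)$ and $\mathrm{Id}_G(\mathrm{UT},\varepsilon)\ne0$. I expect the crux to be this forward implication, and the decisive point is to exploit finiteness of $G$ twice: through \Cref{finiteGroupStabilizes}, so that the supports $\mathrm{Supp}\,R_i$ eventually become a single proper subset of $G$ up to left translation, and through the trivial but essential fact that no proper subset of the finite group $G$ can contain $T=G$ itself --- which is exactly why a sequence whose partial products exhaust $G$ must be bad. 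Everything else is routine bookkeeping with \Cref{goodSequenceLine} and \Cref{mainTheorem}.
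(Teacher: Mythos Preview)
Your proposal is correct and follows essentially the same approach as the paper: both directions invoke the same lemmas (\Cref{notEventuallyIncomplete}, \Cref{finiteGroupStabilizes}, \Cref{goodSequenceLine}) and both pivot on a test sequence whose partial products from position $N$ onward exhaust $G$. The only cosmetic difference is that the paper argues the nontrivial direction by contrapositive (assuming $\mathrm{Id}_G=0$ and showing, for every $n$, some $R_i$ with $i>n$ has full support), whereas you argue it directly (assuming eventual incompleteness and exhibiting a bad sequence); your padding uses $1$'s while the paper repeats $g_1$, but the mechanism is identical.
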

    \begin{proof}
        Let $G = \{g_{1}, \ldots, g_{m}\}$ be a finite group, $\varepsilon:\mathbb{N}\to G$ define a $G$-grading on $\mathrm{UT}$, and assume that $\mathrm{Id}_{G}(\mathrm{UT},\varepsilon) = 0$. Let $n\in\mathbb{N}$. We shall prove that $\varepsilon$ is not eventually incomplete by showing that there exists $i>n$ such that $\mathrm{Supp}\,R_{i}=G$. By \Cref{finiteGroupStabilizes}, $\varepsilon$ is $N$-stable for some $N \in \mathbb{N}$. By definition, possibly replacing $N$ by a greater integer, we can assume that $N>n$. Since $(\mathrm{UT},\varepsilon)$ satisfies no graded polynomial identity, the sequence $$
(\underbrace{g_{1},\ldots,g_1}_\text{$N$ times}, g_{1}^{-1}g_2, g_2^{-1}g_3, \ldots, g_{m-1}^{-1}g_{m})
        $$
        is an $\varepsilon$-good sequence. Thus, by \Cref{goodSequenceLine}, there exists $i \geq N$ such that $g_{1}$, \dots, $g_{m} \in \mathrm{Supp}(R_{i})$. It means that $\mathrm{Supp}(R_{i}) = G$. By consequence, $\varepsilon$ is not eventually incomplete. The converse is proved in \Cref{notEventuallyIncomplete}.
    \end{proof}

    \begin{remark}
        The theorem does not hold if the group $G$ is infinite. In fact, let $\mathcal{N} = \{\mathcal{N}_{i}: i \in \mathbb{N}\}$ be a partition of $\mathbb{N}$ where each $\mathcal{N}_{i}$ is infinite. Then, for each $i \in \mathbb{N}$, let $\varphi_{i}:\mathcal{N}_{i}\to \mathbb{Z}\setminus\{0\}$ be a bijection. Define the function $\bar{f}:\mathbb{N}\to \mathbb{Z}$ via $x \mapsto \varphi_{i}(x)$ if $x \in \mathcal{N}_{i}$, and define a $\mathbb{Z}$-grading $\varepsilon$ on $\mathrm{UT}$ via $\deg_{\mathbb{Z}}(e_{ij}) = \bar{f}(i)-\bar{f}(j)$. For each $t \in \mathbb{N}$, there exists $z \in \mathbb{Z}$ such that $\mathrm{Supp}(R_{t}) = \{x \in \mathbb{Z}: x \neq z\}$, so $\varepsilon$ is eventually incomplete. Note that $\varepsilon$ is stable. Moreover, by the definition of $\bar{f}$, for each finite sequence $(z_{1}, \ldots, z_{s})$ of elements of $\mathbb{Z}$, there exists $t \in \mathbb{N}$ such that $z_{1}, \ldots, z_{s} \in \mathrm{Supp}(R_{t})$. Thus, every finite sequence is $\varepsilon$-good. By \Cref{goodSequenceLineStable} and \Cref{mainTheorem}, $\mathrm{Id}_{G}(\mathrm{UT},\varepsilon) = 0$.
    \end{remark}

In what follows, we build an example of a $G$-grading $\varepsilon$ where $G$ is infinite, $\varepsilon$ is stable, each element of $G$ is realized as a homogeneous degree of a matrix unit in infinitely many distinct rows, and $\mathrm{Id}_{G}(\mathrm{UT},\varepsilon) \neq 0$. Let $\mathcal{N} = \{\mathcal{N}_{i}: i \in \mathbb{N}\}$ be a partition of $\mathbb{N}$, where each $\mathcal{N}_{i}$ is infinite. Then, for each $i \in \mathbb{N}$, let $\varphi_{i}:\mathcal{N}_{i}\to (2\mathbb{Z}+1)\cup\{2\}$ be a bijection. Define the map $\bar{f}:\mathbb{N}\to \mathbb{Z}$ via $x \mapsto \varphi_{i}(x)$ if $x \in \mathcal{N}_{i}$, and define a $\mathbb{Z}$-grading $\varepsilon$ on $\mathrm{UT}$ via $\deg_{\mathbb{Z}}(e_{ij}) = \bar{f}(i)-\bar{f}(j)$. Note that, for each $z \in \mathbb{Z}$, there exist infinitely many $t \in \mathbb{N}$ such that $z \in \mathrm{Supp}(R_{t})$. Besides, the support of each $R_{t}$ is either $(2\mathbb{Z} + 1)\cup \{0\}$ or of the form $2\mathbb{Z}\cup\{2k+ 1\}$, for some $k \in \mathbb{Z}$. In particular, $(1,2,3)$ is an $\varepsilon$-bad sequence.

\section{Finite basis property}\label{sec:finitebasis}
In this section, we prove that, if $G$ is a finite group, then $\mathrm{Id}_{G}(\mathrm{UT})$ is finitely based.

For the following, let $G$ be an arbitrary group and let $\varepsilon: \mathbb{N}\to G$ define a $G$-grading on $\mathrm{UT}$ via $\deg_{G}(e_{i,i+1}) = \varepsilon(i)$.

\begin{definition}
An $\varepsilon$-bad sequence $\eta = (\eta_{1},\eta_{2},\ldots,\eta_{m})$ is called \emph{minimal} if
\begin{enumerate}
    \item every consecutive subsequence of $\eta$ is an $\varepsilon$-good sequence, and
    \item $(\eta_{1},\eta_{2},\ldots,\eta_{p-1},\eta_{p}\eta_{p+1},\eta_{p+2},\ldots,\eta_{m}) \in \mathcal{G}(\varepsilon)$, for all $1 \leq p < m$.
\end{enumerate}
\end{definition}

We obtain a version of \Cref{mainTheorem} in terms of minimal bad sequences.
\begin{Prop}
    \label{minimalBasis}
    $\mathrm{Id}_{G}(\mathrm{UT},\varepsilon)$ is generated, as an ideal of graded polynomial identities, by the set of all polynomials $f_{\eta}=f_{1}\cdots f_{m}$, where $\eta=(\deg_{G}(f_{1}),\ldots,\deg_{G}(f_{m}))$ is a minimal $\varepsilon$-bad sequence, and each $f_{i}$ has one of the following forms:
    \newcounter{contador}
    \begin{enumerate}
        \item $f_{i} = z_{2i}$, if $\deg_G f_i\ne1$,
        \item $f_{i} = z_{2i}z_{2i+1}$, if $\deg_G f_i=1$, where $\deg z_{2i}\ne1$,
        \item $f_{i} = [y_{2i},y_{2i+1}]$, if $\deg_G f_i=1$,
        \setcounter{contador}{\arabic{enumi}}
    \end{enumerate}
    Additionally, if $\mathbb{F}$ is a finite field containing $q$ elements, then $f_i$ may have the following form:
    \begin{enumerate}
    \setcounter{enumi}{\arabic{contador}}
        \item $f_{i} = y_{2i}^{q} - y_{2i}$, if $\deg_G f_i=1$.
    \end{enumerate} 
\end{Prop}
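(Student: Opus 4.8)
The plan is to show that the ideal generated by the "minimal" bad polynomials coincides with the ideal $I(\varepsilon)$ from \Cref{badpolynomial}, which by \Cref{mainTheorem}(i) equals $\mathrm{Id}_G(\mathrm{UT},\varepsilon)$. Since every minimal $\varepsilon$-bad sequence is in particular $\varepsilon$-bad, the corresponding polynomials $f_\eta$ lie in $I(\varepsilon)$, so one inclusion is immediate. For the reverse inclusion, I would take an arbitrary $\varepsilon$-bad sequence $\eta = (\eta_1,\ldots,\eta_m)$ and the associated generator $f_\eta = f_1\cdots f_m$ of $I(\varepsilon)$, and argue that $f_\eta$ lies in the ideal generated by the minimal bad polynomials. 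The main structural observation is that any $\varepsilon$-bad sequence "contains" a minimal $\varepsilon$-bad subsequence in a suitable sense: given $\eta$ bad, one repeatedly either deletes an entry at an end (if some consecutive proper subsequence is still bad) or merges two adjacent entries $\eta_p,\eta_{p+1}$ into $\eta_p\eta_{p+1}$ (if the merged sequence is still bad), until neither operation produces a bad sequence — the resulting sequence $\eta'$ is then minimal by definition.

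The crux is then to translate these two reduction operations on sequences into algebraic operations on the polynomials $f_\eta$ modulo the ideal generated by minimal bad polynomials. For the deletion step: if $\eta''$ is obtained from $\eta$ by removing the first or last entry, then $f_{\eta''}$ divides $f_\eta$ on one side (up to choosing fresh variables), so $f_\eta$ belongs to the ideal generated by $f_{\eta''}$; iterating, if some consecutive subsequence $\eta^{(0)}$ of $\eta$ is bad, then $f_\eta$ is in the ideal generated by $f_{\eta^{(0)}}$. For the merge step: if $(\eta_1,\ldots,\eta_p\eta_{p+1},\ldots,\eta_m)$ is still bad, one must show the generator $f_\eta$ — which has $f_p, f_{p+1}$ as two separate consecutive factors of types (1)–(4) — can be rewritten modulo lower generators so that the product $f_p f_{p+1}$ is replaced by a single factor of the appropriate type associated to degree $\eta_p\eta_{p+1}$. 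Here one uses the standard commutator manipulations already exploited in Lemmas~\ref{GstandardGenerator}--\ref{spanSetFiniteField}: a product like $z_{2p} z_{2p+1}$ of two odd-degree variables is itself essentially the type-(1) generator attached to $\eta_p\eta_{p+1}$ (if $\eta_p\eta_{p+1}\ne 1$) or the type-(2) generator (if $\eta_p\eta_{p+1}=1$), while products involving commutators $[y,y']$ or $y^q-y$ are handled by absorbing the substituted-value variation into the ideal using the expansion formulas for $[ab,x_1,\ldots,x_m]$ and \Cref{SiderovLemma}. One should organize this as an induction on $m$ (the length of $\eta$), or on a well-founded ordering counting (length, then number of "non-minimal witnesses").

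I expect the merge step to be the main obstacle, precisely because the four allowed forms (1)–(4) of the factors $f_i$ depend on whether $\deg_G f_i$ is trivial, and merging two adjacent factors can change the triviality status of the degree: e.g.\ two odd factors $z z'$ can produce a factor of trivial degree, which must then be re-expressed in terms of the generators of type (2) or (3), and symmetrically an odd times an even-type factor must be reconciled. Keeping careful track of which generator type applies after each merge, and verifying that the "correction terms" produced by the commutator expansions all correspond to shorter bad sequences (so that induction applies), is the delicate bookkeeping. Once this is in place, the argument closes: starting from any bad $\eta$, reduce via deletions and merges to a minimal bad $\eta'$, conclude $f_\eta$ lies in the ideal generated by $f_{\eta'}$ together with shorter generators handled by induction, hence $I(\varepsilon)$ is generated by the minimal bad polynomials, and the proposition follows from \Cref{mainTheorem}(i).
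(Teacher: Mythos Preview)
Your overall plan matches the paper's: start from \Cref{mainTheorem}, take an arbitrary $\varepsilon$-bad $\eta$, and reduce it to a minimal one by repeatedly passing to consecutive subsequences or merging adjacent entries, while checking at each step that $f_\eta$ remains a consequence of the new $f_{\eta'}$. Where you diverge is in the merge step, which you flag as ``the main obstacle'' and propose to handle with commutator expansions for $[ab,x_1,\ldots,x_m]$ and \Cref{SiderovLemma}. None of that machinery is needed. The point you are missing is that we are working in a $T_G$-ideal, which is closed under graded substitutions: if $g=\eta_p\eta_{p+1}\neq 1$, take $f_{\eta'}=f_1\cdots f_{p-1}\,x^{(g)}\,f_{p+2}\cdots f_m$ (a type-(1) factor in the merged slot) and simply substitute $x^{(g)}\mapsto f_pf_{p+1}$, which is homogeneous of degree $g$; this recovers $f_\eta$, so $f_\eta\in\langle f_{\eta'}\rangle_G$ in one line. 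The case $g=1$ is why the statement includes type-(2) factors $z\,z'$: one uses $f_{\eta'}=f_1\cdots f_{p-1}\,z^{(h)}z^{(h^{-1})}\,f_{p+2}\cdots f_m$ and substitutes $z^{(h)}\mapsto f_p$, $z^{(h^{-1})}\mapsto f_{p+1}$, provided $h=\deg_G f_p\neq 1$.

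The one genuine wrinkle you do not mention is how to guarantee $\deg_G f_p\neq 1$ when $g=1$. The paper handles this by shifting the merge one position to the left: if $\deg_G f_p=1$ then $\eta_p\eta_{p+1}=\eta_{p+1}$, so the same $\eta'$ is obtained by merging positions $p-1,p$ instead; iterate until the left factor has nontrivial degree. This must terminate before reaching $p=1$, since otherwise $\eta'$ would be a proper consecutive subsequence of $\eta$, which is $\varepsilon$-good by assumption. With this shift and the substitution observation, the merge step is immediate and no ``delicate bookkeeping'' with correction terms arises.
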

\begin{proof}
    By \Cref{mainTheorem}, we know that $\mathrm{Id}_{G}(\mathrm{UT},\varepsilon) = I(\varepsilon)$. Now, we consider the set of all polynomials $f_\eta=f_1\cdots f_m$, where each $f_i$ is as in (1)-(4), and $\eta$ is $\varepsilon$-bad. Since each polynomial of \Cref{badpolynomial} is included in such a list, we know that it generates $\mathrm{Id}_G(\mathrm{UT},\varepsilon)$.
    
    Let $\eta$ be an $\varepsilon$-bad sequence, not necessarily minimal, and let $f_{\eta} = f_{1}\cdots f_{m}$ be a polynomial as in the statement of the proposition. We shall prove that $f_\eta$ is a consequence of some $f_\mu$, as in the proposition, where $\mu$ is a minimal $\varepsilon$-bad sequence.
    
    Clearly, we may assume that every consecutive subsequence of $\eta$ is $\varepsilon$-good. If $\eta$ is minimal, then we are done. So, suppose that $\eta$ is not minimal. There exists $i \in \mathbb{N}$, such that 
    $$\eta' = (\deg_{G}(f_{1}),\ldots,\deg_{G}(f_{i-1}),\deg_{G}(f_{i})\deg_{G}(f_{i+1}),\deg_{G}(f_{i+2}),\ldots,\deg_{G}(f_{m}))$$
    is an $\varepsilon$-bad sequence.
    If $g=\deg_{G}(f_{i})\deg_{G}(f_{i+1})\ne1$, then $f_\eta\in\langle f_{\eta'}\rangle_G$, where $f_{\eta'}=f_1\ldots f_{i-1}x^{(g)}f_{i+2}\cdots f_m$. If $g=1$, then we may assume that $\deg_Gf_i\ne1$. Indeed, if $\deg_Gf_i=1$, then we may rewrite the sequence $\eta'$ as
    $$
    \eta'=(\deg_{G}(f_{1}),\ldots,\deg_Gf_{i-2},\deg_Gf_{i-1}\deg_{G}(f_{i}),\deg_{G}(f_{i+1}),\ldots,\deg_{G}(f_{m})).
    $$
    If $\deg_G f_{i-1}f_{i}\ne1$, then we are in the previous situation. Otherwise, we continue the process. Note that we cannot have $i=1$, otherwise $\eta'$ is a consecutive subsequence of $\eta$. Hence, the process eventually ends so we may assume that $\deg_Gf_i\ne1$. Now, write $f_{\eta'}=f_1\ldots f_{i-1}g_i f_{i+1}\cdots f_m$, where $g_i=x_{4i}^{(h)}x_{4i}^{(h^{-1})}$, $h=\deg_Gf_i$. Thus, $f_{\eta'}$ is as required and $f_\eta$ is a consequence of $f_{\eta'}$.
       
    If $\eta'$ is not minimal, we shall repeat the process. It ends because in each step the length of the sequence decreases and the empty sequence is $\varepsilon$-good.
\end{proof}

With these results we can state.

\begin{theorem}\label{thm:finitebasis}
    Let $\mathbb{F}$ be an arbitrary field (finite or infinite), $G$ a finite group, and let $\varepsilon:\mathbb{N}\to G$ define a $G$-grading on $\mathrm{UT}$. Then  $\mathrm{Id}_{G}(\mathrm{UT},\varepsilon)$ is finitely based.
\end{theorem}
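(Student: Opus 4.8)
\section*{Proof proposal}

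The strategy is the following. By \Cref{minimalBasis}, $\mathrm{Id}_G(\mathrm{UT},\varepsilon)$ is generated, as an ideal of graded identities, by the polynomials $f_\eta$ attached to minimal $\varepsilon$-bad sequences $\eta$; since $G$ is finite, each such $\eta$ gives rise to only finitely many admissible $f_\eta$ (a consequence of $|G|<\infty$: the only choices are the form of each factor of trivial degree, and, for a factor of type $z_{2i}z_{2i+1}$, the auxiliary nontrivial degree). Hence it is enough to show that there are only finitely many minimal $\varepsilon$-bad sequences, and for this I will prove a uniform bound on their lengths: over a finite group, boundedly many lengths is all that is needed.

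I would first dispose of the trivial case $\mathrm{Id}_G(\mathrm{UT},\varepsilon)=0$, where the empty set is a basis, so that by \Cref{conditionID} we may assume $\varepsilon$ is eventually incomplete. By \Cref{finiteGroupStabilizes} the grading is $N$-stable for some $N$, and after enlarging $N$ we may also assume $1\in\mathrm{Supp}(R_i)$ (using \Cref{not1}) and $\mathrm{Supp}(R_i)\ne G$ for all $i\ge N$. Then, for $i\ge N$, every degree occurring in the $i$-th row occurs there infinitely often (\Cref{emptyInfinite}), so a finite sequence is realizable starting at row $i$ exactly when all of its partial products lie in $\mathrm{Supp}(R_i)$; moreover the sets $\mathrm{Supp}(R_i)$, $i\ge N$, are left translates of $\mathrm{Supp}(R_N)$, so only finitely many of them occur.

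Now let $\eta=(\eta_1,\dots,\eta_m)$ be a minimal $\varepsilon$-bad sequence, with partial products $\pi_k=\eta_1\cdots\eta_k$. Minimality forces every merge $\mu_p=(\eta_1,\dots,\eta_{p-1},\eta_p\eta_{p+1},\eta_{p+2},\dots,\eta_m)$ to be $\varepsilon$-good, and the partial products of $\mu_p$ are those of $\eta$ with $\pi_p$ deleted. In the fully stable case $N=1$ this already gives the bound: by \Cref{goodSequenceLineStable}, if some $\pi_p$ occurred a second time among $\pi_1,\dots,\pi_m$, then the good merge $\mu_p$ would exhibit all of $\pi_1,\dots,\pi_m$ inside a single $\mathrm{Supp}(R_i)$ and hence certify $\eta$ as $\varepsilon$-good, a contradiction; so $\pi_1,\dots,\pi_m$ are pairwise distinct and $m\le|G|$. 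For general $N$ I would run the same idea on the stable part of $\eta$: applying \Cref{goodSequenceLine} to the merges $\mu_p$ with $p\ge N$, using that a partial product deleted by a merge stays present whenever it is repeated later, and a pigeonhole over the finitely many stable supports, forces the partial products in the stable range to be essentially distinct; combined with the fixed length $N-1$ of the part below row $N$, this bounds $m$.

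The main obstacle is precisely this last step: passing from $N=1$, where goodness of a merge is read off from one row-support, to general $N$, where a minimal bad sequence carries an \emph{unstable prefix} of length $N-1$ that must be matched against realizations of the stable tail. The delicate point will be to check that, when one reconstructs a realization of $\eta$ from a realization of a good merge, the prefix can always be re-attached to the tail at a row lying in the stable region; the finitely many rows below $N$ and the finitely many available stable supports are what make this bookkeeping go through. Granting the length bound, finiteness of $G$ yields finitely many minimal $\varepsilon$-bad sequences, hence a finite generating set for $\mathrm{Id}_G(\mathrm{UT},\varepsilon)$, which is the claim.
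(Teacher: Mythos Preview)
Your overall strategy --- bound the length of minimal $\varepsilon$-bad sequences via a pigeonhole on partial products, then invoke \Cref{minimalBasis} --- is exactly the paper's, and your treatment of the stable case $N=1$ is correct and coincides with the paper's argument specialized there. The extra normalizations you set up (eventual incompleteness, $1\in\mathrm{Supp}(R_i)$, $\mathrm{Supp}(R_i)\ne G$ for $i\ge N$) and the proposed ``pigeonhole over the finitely many stable supports'' play no role and can be dropped.

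The step you flag as the main obstacle --- reattaching the unstable prefix in the general case --- is in fact immediate, and the reason is that you should never detach it. Applying \Cref{goodSequenceLine} to the good merge, as you suggest, throws away exactly the information you then struggle to recover; instead, work with a concrete realization of the merge by matrix units. The pigeonhole is on the tail partial products $\eta_N\cdots\eta_q$ for $q\ge N$ (there are more than $|G|$ of them once $m>N+|G|-1$), so the merge position $k$ satisfies $k\ge N$. Hence a realization of $\mu_k$ gives indices $i_1<\cdots<i_N<\cdots$ in which the first $N$ are untouched and $\deg_G e_{i_q,i_{q+1}}=\eta_q$ for $q<N$. Now look at the row $R_{i_N}$: it contains every tail partial product except $\eta_N\cdots\eta_k$, and that one equals the repeated $\eta_N\cdots\eta_\ell$, which \emph{is} present. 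Since $i_N\ge N$, \Cref{emptyInfinite} lets you choose $j_N<\cdots<j_s$ above $i_N$ with $\deg_G e_{i_N,j_q}=\eta_N\cdots\eta_q$ for all $q\ge N$, and then $i_1<\cdots<i_N<j_N<\cdots<j_s$ realizes $\eta$ itself, contradicting badness. This yields the bound $m\le N+|G|-1$ directly, with no bookkeeping on row-supports or on the finitely many rows below $N$.
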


\begin{proof}
    Assume that $G$ contains $m$ elements. By \Cref{finiteGroupStabilizes}, $\varepsilon$ is $N$-stable for some $N \in \mathbb{N}$. Assume that $\eta = (\eta_{1}, \ldots, \eta_{s})$ is a minimal $\varepsilon$-bad sequence with length $s > N + m - 1$. The sequence $(\eta_{N}, \eta_{N}\eta_{N+1}, \ldots, \eta_{N}\cdots\eta_{s})$ contains at least $m+1$ elements. Thus, there exist $l$, $k \in \mathbb{N}$, $N \leq k < \ell \leq s$, such that $\eta_{N}\cdots\eta_k = \eta_{N}\cdots\eta_\ell$. Define
    \[
    \eta' = (\eta_{1}, \ldots, \eta_{k - 1}, \eta_{k}\eta_{k+1}, \eta_{k + 2}, \ldots, \eta_{s}).
    \]
    Since $\eta$ is minimal, $\eta'$ is $\varepsilon$-good. Then, we can find 
    \[
    i_1<i_2<\cdots<i_N<\cdots<i_{k-1}<i_{k+1}<\cdots<i_{s+1}
    \]
    such that $\deg_G e_{i_q,i_{q+1}}=\eta_p$, for $q=1,2,\ldots,k-2,k+1,\ldots,s$, and $\deg_G e_{i_{k-1},i_{k+1}}=\eta_k\eta_{k+1}$. Thus, one has
    \[
    \deg_G e_{i_{N},i_{q+1}}=\eta_{N}\cdots\eta_q,\quad q=N,\ldots,k-1,k+1,\ldots,s.
    \]
    However, the 'missing element' is $\eta_{N}\cdots\eta_k=\eta_{N}\cdots\eta_\ell\in\mathrm{Supp}\,R_{i_{N}}$. Since $i_{N}\ge N$, from \Cref{emptyInfinite}, the elements $\eta_{N}\cdots\eta_q$, for $q=N,\ldots,s$, are the homogeneous degree of infinitely many matrix units in $R_{i_{N}}$. Thus, we can find $j_{N}<\cdots<j_s$ satisfying $\deg_G e_{i_{N},j_q}=\eta_{N}\cdots\eta_q$, for $q=N,\ldots,s$. In particular, $\deg_G e_{i_{N},j_{N}}=\eta_{N}$, and $\deg_G e_{j_q,j_{q+1}}=\eta_{q+1}$, for $q=N,\ldots,s-1$. Hence, $\eta$ is an $\varepsilon$-good sequence as well, a contradiction.

    As a consequence, every minimal $\varepsilon$-bad sequence has length at most $N+m-1$. Since $G$ is finite, it follows from \Cref{minimalBasis} that  $\mathrm{Id}_{G}(\mathrm{UT},\varepsilon)$ has a finite basis.
\end{proof}

\begin{remark}
Assume that a $G$-grading $\varepsilon$ is $N$-stable, where $G$ is finite. The proof of the previous theorem shows that the minimal bad sequences have length at most $N+|G|-1$. This bound is the best possible. Indeed, let $G=\mathbb{Z}_2$, and let $\varepsilon=(1,0,0,\ldots)$ define a $\mathbb{Z}_2$-grading on $\mathrm{UT}$ via $\deg_{\mathbb{Z}_2} e_{i,i+1}=\varepsilon(i)$. Then, by definition, $\varepsilon$ is stable (i.e., $1$-stable). The $\varepsilon$-bad sequences of $\varepsilon$ of length at most $2$ are $(0,1)$ and $(1,1)$. Thus, these sequences are minimal, and have length $2=N+|G|-1$.
\end{remark}

\begin{remark}
Let $G$ be an arbitrary group, $\Gamma$ a finite $G$-grading on $\mathrm{UT}$, and let $N=\langle x^{(g)}\mid g\notin\mathrm{Supp}\,\Gamma\rangle_G$ be the $T_G$-ideal generated by all the trivial $G$-graded polynomial identities of $\Gamma$. The proof of \Cref{finiteGroupStabilizes} works assuming that $\Gamma$ is finite. Thus, $\Gamma$ is $N$-stable, for some $N\in\mathbb{N}$. Moreover, a minor modification of the proof of \Cref{thm:finitebasis} shows that every minimal $\Gamma$-bad sequence has length at most $N+|\mathrm{Supp}\,\Gamma|-1$. Let $I$ be the $T_G$ ideal generated by all $f_\eta=f_1\cdots f_s$ (as in \Cref{minimalBasis}), where $\eta$ is a minimal $\Gamma$-bad sequence, and $\deg_G f_i\in\mathrm{Supp}\,\Gamma$. Then, $I$ is finitely based. Therefore, \Cref{minimalBasis} tells us that $\mathrm{Id}_G(\mathrm{UT},\Gamma)=I+N$.
\end{remark}

\section{Nonisomorphic gradings satisfying the same identities}\label{sec:examples}
In this section, in contrast to the finite-dimensional case, we provide examples of non-isomorphic gradings that satisfy the same set of graded polynomial identities.

\subsection{Example} Let $G=\mathbb{Z}_2=\{0,1\}$ be the group with $2$ elements. We let $\varepsilon=(0,1,0,1,\ldots)$ and $\varepsilon'=(1,0,1,0,\ldots)$. As before, $\varepsilon$ and $\varepsilon'$ define $\mathbb{Z}_{2}$-gradings on $\mathrm{UT}$ by assigning degrees on the matrix units $e_{i,i+1}$. Since $\varepsilon\ne\varepsilon'$, one has $(\mathrm{UT},\varepsilon)\not\cong(\mathrm{UT},\varepsilon')$. On the other hand, the linear map $\iota:\mathrm{UT}\to\mathrm{UT}$ such that $\iota(e_{ij})=e_{i+1,j+1}$ is an injective algebra homomorphism. Moreover, both $\iota:(\mathrm{UT},\varepsilon)\to(\mathrm{UT},\varepsilon')$ and $\iota:(\mathrm{UT},\varepsilon')\to(\mathrm{UT},\varepsilon)$ are graded algebra monomorphisms. Thus, $\mathrm{Id}_G(\mathrm{UT},\varepsilon)=\mathrm{Id}_G(\mathrm{UT},\varepsilon')$. From \Cref{goodsequencesbadsuite}, we see that every sequence of elements of $\mathbb{Z}_2$ is good. Hence, both algebras satisfy no nontrivial graded polynomial identities.
    
\subsection{Example} We provide an example of non-isomorphic gradings on $\mathrm{UT}$ where the set of graded polynomial identities coincide and is non-trivial. For it, let $G=\mathbb{Z}_{3}=\{0,1,2\}$ be the group with $3$ elements. We let $\varepsilon=(1,2,1,2,\ldots)$ and $\varepsilon'=(2,1,2,1,\ldots)$. As the last example, $\varepsilon$ and $\varepsilon'$ define $\mathbb{Z}_{3}$-gradings on $\mathrm{UT}$ by assigning degrees on the matrix units $e_{i,i+1}$ such that $(\mathrm{UT},\varepsilon)\not\cong(\mathrm{UT},\varepsilon')$ but $\mathrm{Id}_G(\mathrm{UT},\varepsilon)=\mathrm{Id}_G(\mathrm{UT},\varepsilon')$. Additionally, by \Cref{goodsequencesbadsuite}, we have that $(1,1)$ is an $\varepsilon$-bad sequence (and an $\varepsilon'$-bad sequence as well). Therefore, $\mathrm{Id}_G(\mathrm{UT},\varepsilon)=\mathrm{Id}_G(\mathrm{UT},\varepsilon')\ne0$.

\subsection{Example} Let $G$ be a group and let $\varepsilon = (g_{1}, g_{2}, \ldots)$ define a stable $G$-grading on $\mathrm{UT}$. Then, consider the $G$-grading defined by the sequence $\varepsilon' = (g_{1}, 1^{(d_{1})}, g_{2}, 1^{(d_{2})}, \ldots)$  on $\mathrm{UT}$, where each $d_{j} \in \mathbb{Z}_{\geq 0}$. By \Cref{goodsequencesbadsuite}, we have that $\mathcal{G}(\varepsilon) \subseteq \mathcal{G}(\varepsilon')$. On the other hand, let $\eta' = (\eta'_{1}, \ldots, \eta'_{s}) \in \mathcal{G}(\varepsilon')$. Then, by \Cref{goodSequenceLineStable}, there exists $i' \in \mathbb{N}$ such that 
    $\eta'_{1}, \eta'_{1}\eta'_{2}, \ldots, \eta'_{1}\cdots\eta'_{s} \in \mathrm{Supp}(R_{i'}, \varepsilon')$. Since we construct $\varepsilon'$ by adding $1$'s between two elements of $\varepsilon$, by \Cref{not1}, we have that for each $k \in \mathbb{N}$, $\mathrm{Supp}(R_{k}, \varepsilon) = \mathrm{Supp}(R_{k'}, \varepsilon')$, for every $k' \in \mathbb{N}$, $k - 1 + d_{1} + \cdots + d_{k - 2} < k' \leq k + d_{1} + \cdots + d_{k - 1}$. It means that there exists $i \in \mathbb{N}$ such that 
    $\eta'_{1}, \eta'_{1}\eta'_{2}, \ldots, \eta'_{1}\cdots\eta'_{s} \in \mathrm{Supp}(R_{i}, \varepsilon)$. Then, by \Cref{goodSequenceLineStable}, $\eta' \in \mathcal{G(\varepsilon)}$ and $\mathcal{G}(\varepsilon) = \mathcal{G}(\varepsilon')$. This implies that $\mathrm{Id}_G(\mathrm{UT},\varepsilon)=\mathrm{Id}_G(\mathrm{UT},\varepsilon')$.

\end{document}